\pdfoutput=1

\documentclass[12]{amsart}



\usepackage{tikz}
\usepackage{amsmath}
\usepackage{amsfonts}
\usepackage{rotating}
\usepackage{bbm}
\usepackage[all,cmtip]{xy}
\usepackage{subcaption}

\captionsetup[subfigure]{labelfont=rm}

\usepackage{amssymb}
\usepackage[latin1]{inputenc}
\usepackage{graphicx}
\usepackage{dsfont}
\usepackage{color}
\usepackage{hyperref}
\usepackage{enumitem}
\usepackage{color}
 
\usepackage{marginnote}
\renewcommand{\phi}{\varphi}
\newcommand{\C}{{\mathbb{C}}}
\newcommand{\R}{{\mathbb{R}}}

\newcommand{\Z}{{\mathbb{Z}}}
\newcommand{\N}{{\mathbb{N}}} 

  \tikzset{%
  highlight/.style={rectangle,rounded corners,fill=red!15,draw,fill opacity=0.5,thick,inner sep=0pt}
  }

\usetikzlibrary{mindmap,scopes,arrows,shapes,chains,positioning,fit,backgrounds,snakes}

\tikzstyle{every picture}+=[remember picture]
\tikzstyle{decision} = [diamond, draw, fill=blue!20,
    text width=2.5em, text badly centered, node distance=2.5cm, inner sep=0pt]
\tikzstyle{block} = [rectangle, draw, fill=blue!20,
    text width=2em, text centered, node distance=2.5cm, rounded corners, minimum height=2em]
\tikzstyle{line} = [draw, very thick, color=black!50, -latex']
\tikzstyle{cloud} = [draw, ellipse,fill=red!20, node distance=2cm,
    minimum height=2em]
\tikzstyle{round} = [draw, circle, node distance=2cm,
    minimum height=2em]

\usepackage{amssymb}
\newtheorem{Theorem}{Theorem}[section]
\newtheorem{Lemma}[Theorem]{Lemma}

\newtheorem{Proposition}[Theorem]{Proposition}

\newtheorem{Definition}[Theorem]{Definition}
\newtheorem{Remark}[Theorem]{Remark}
\newtheorem{Example}[Theorem]{Example}

\def\Z{{\mathbb Z}}
\def\R{{\mathbb R}}

\def \C{{\mathbb C}}

\def \0{\lambda_{0}}

\newcommand{\relmiddle}[1]{\mathrel{}\middle#1\mathrel{}}

\makeatletter
\protected\def\xvcenter{%
  \hbox\bgroup$\everyvbox{\everyvbox{}\aftergroup\m@th\aftergroup$\aftergroup\egroup}%
  \vcenter
}

\DeclareRobustCommand{\midscript}[1]{
  \mathchoice{\mid@script\scriptstyle{#1}}
    {\mid@script\scriptstyle{#1}}
    {\mid@script\scriptscriptstyle{#1}}
    {\mid@script\scriptscriptstyle{#1}}
}
\newcommand{\mid@script}[2]{
  \vcenter{\hbox{$\m@th#1#2$}}
}

\makeatletter

\setcounter{secnumdepth}{3} 

\begin{document}

\title[Periodic orbits in the restricted three-body problem]{On families of  periodic orbits in the restricted three-body problem}

\author{Seongchan Kim}
 \address{Institut de Math\'ematiques, Universit\'e de Neuch\^atel, Rue Emile-Argand 11, 2000 Neuch\^atel, Switzerland}
 \email {seongchan.kim@unine.ch}
\date{\today}



\begin{abstract} Since Poincar\'e, periodic orbits have been one of the most important objects in dynamical systems. However, searching them is in general quite difficult. A common way to find them is to construct families of periodic orbits which start at obvious periodic orbits. On the other hand, given two periodic orbits one might ask if they are connected by an orbit cylinder, i.e., by a one-parameter family of periodic orbits. 

In this article we study  this   question for a certain class of periodic orbits in  the planar circular restricted three-body problem.   Our strategy is to compare the Cieliebak-Frauenfelder-van Koert invariants which are obstructions to the existence of an orbit cylinder. 
 
\end{abstract}

\maketitle
\setcounter{tocdepth}{3}
\tableofcontents

\section{Introduction}\label{secintro}

The \textit{three-body problem} studies the motion of three masses in $\R^3$ according to Newton's law of gravitation. 
This problem is notoriously difficult in general and hence one considers special cases.
One is the \textit{restricted   three-body problem}, in which one sets the mass of one of the three masses equal to zero. We call this massless body   the \textit{satellite}.
The other two masses, which will be referred to as the \textit{Earth} and \textit{Moon}, then move in a plane. 
If we choose barycentric coordinates in this plane, then each of the Earth and Moon   orbits about the center of mass in a conic section.

We now make two further simplifying assumptions. 
First, we assume that the primaries move in circular orbits. 
Scaling their masses to be $1- \mu$ and $\mu$ with $\mu \in (0, 1/2] $, we can then choose coordinates in the plane and the time unit in such a way that
the positions of the Earth and Moon are given by
$$
E(t) = -\mu \bigl( \cos (t), -\sin (t) \bigr), \quad
M(t) = (1-\mu) \bigl( \cos (t), -\sin (t) \bigr) ,
$$
see Figure \ref{fig.EM}.
Second, we assume that the satellite moves in the same plane as the two primaries. 
Writing $q(t) \in \R^2$ for its position and $p(t) \in \R^2$ for its momentum,
the Hamiltonian of the satellite is then
\begin{equation} \label{e:H}
H_t(q,p)  =  \frac{1}{2}|p|^2  - \frac{1-\mu}{|q-E(t)|} - \frac{\mu}{|q-M(t)|}  .
\end{equation}
Note that this Hamiltonian is time-dependent. 
To put ourself into a more geometric situation, we follow Jacobi and pass to a rotating coordinate system $q \to e^{-it}q$.
In this new coordinate system, the positions of the Earth and Moon are then fixed, 
$$
E = (-\mu,0), \quad M = (1-\mu,0),
$$
and the Hamiltonian becomes time-independent,
\begin{equation} \label{e:PCR3B}
H_{\text{PCR3BP}}(q,p) =\frac{1}{2}|p|^2- \frac{1-\mu}{|q-E|} - \frac{\mu}{|q-M|}  + q_1p_2-q_2p_1 ,
\end{equation}
at the cost of the ``rotating term'' $q_1p_2-q_2p_1$, which is due to the centrifugal force and the Coriolis force.
The Hamiltonian system~\eqref{e:PCR3B} is called the \textit{planar circular restricted three-body problem},  the PCR3BP for short.
Note that stationary points of the PCR3BP correspond to special periodic (in fact, circular) orbits of~\eqref{e:H}.

\begin{figure}[t]
\begin{center}
    \begin{tikzpicture} 
	\node(E)[draw,circle, minimum size = 3.5cm] at (5,2.5) { };
	\node(E)[draw,circle, minimum size = 1.5cm] at (5,2.5) { };
	\draw[dashed] (4.5,0.8 ) -- (5.5, 4.2);
	\draw[line, black, thin] (2,2.5) -- (8, 2.5);
	\draw[line, black, thin] (5,0) -- (5, 5);
\draw[fill] (4.8,1.8) circle [radius=1.2mm];
\draw[fill] (5.5,4.2) circle [radius=1.2mm];
	\draw (4.3,1.8 ) node[below]{$E(t)$} ;
	\draw (6, 4.8) node[below]{$M(t)$} ;
	\draw[line, black](4.8,1.8 ) -- (3.8,2.2 );
	\draw[line, black] (5.5, 4.2)  -- (6.5,3.8 );
	\draw (5.9,2.5) node[below]{$\mu$} ;
	\draw (6.9,2.53) node[below]{$1-\mu$} ;
   \end{tikzpicture}
\end{center}
 \caption{The motion of the two primaries in the PCR3BP before
passing to a rotating coordinate system.}
 \label{fig.EM}
\end{figure}
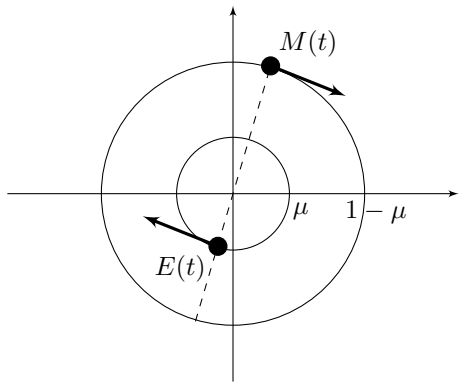

Even though we have made many simplifying assumptions, the dynamics in PCR3BP is still
extremely complicated, whence it generates still many unexplored questions.
One of them is the existence of periodic orbits, 
a problem studied by many outstanding mathematicians over the last two centuries. 
(We refer to \cite{Bruno} for historical informations.) 
The search for periodic orbits of the PCR3BP was adverted by
Poincar\'e in his beautiful book~\cite{Poin}:
\begin{quote}
``What renders these periodic solutions so precious is that they are, so to speak, 
the only breach through which we may try to penetrate a stronghold previously reputed to be impregnable."
\end{quote}

\noindent His strategy to find periodic orbits in the PCR3BP is looking at the family of Hamiltonians
\begin{equation} \label{e:Hs}
H^{\mu}(q,p)  = \frac{1}{2}|p|^2 - \frac{1-\mu}{|q-E|} - \frac{\mu}{|q-M|}  + q_1p_2-q_2p_1  
\end{equation}
in which the parameter   is the mass ratio of the two primaries. 
For $\mu \neq 0$, we have the PCR3BP, while 
\begin{equation} \label{e:HK}
H_{\text{RKP}}(q,p):=H^0(q,p) \,=\, \frac 12 p^2 - \frac{1}{|q|}  + q_1p_2-q_2p_1 
\end{equation}
is  the \textit{rotating Kepler problem}, namely the usual Kepler problem written in our rotating coordinates. 
Since periodic orbits in the rotating Kepler problem are easy to find,  we may hope to find an "orbit cylinder" $\gamma^{\mu}$ at least for $\mu$ in a small interval around $0$, where each $\gamma^{\mu}$ is a periodic orbits of $H^{\mu}$.  The implicit function  theorem indeed shows that 
there is an orbit cylinder $\gamma^{\mu}$, $\mu \in [0,\epsilon)$, emanating from the (rotating) Kepler ellipse
$\gamma^0$, so that one finds a periodic orbit of the PCR3BP, provided that $\mu$ is small enough,  see for example \cite{Arenstorf, Barrar}.

 In this paper,  we take a closer look at \textit{Euler's problem of two fixed centers}, which   was first introduced by Euler \cite{Euler1, Euler2}.
  He considered this problem as a starting point to study the PCR3BP.
 The describing Hamiltonian   is obtained from the PCR3BP by forgetting the rotating term, 
\begin{equation*} \label{e:HE}
H_{\text{Euler}}(q,p) = \frac{1}{2}|p|^2 - \frac{1-\mu}{|q-E|} 
- \frac{\mu}{|q-M|} .
\end{equation*}
This system describes the dynamics of the satellite
attracted by two masses of mass $1-\mu$ and $ \mu$ that are fixed at $E$ and~$M$.
If $\mu = 1/2$ and $\mu$ is viewed as a charge instead of a mass, then 
this system can also be seen as describing the motion of an electron attracted
by two protons, as in the hydrogen molecule, see \cite{pauli}.
 
\begin{figure}[h]
\begin{center}
\begin{tikzpicture} 
 \node at (14,6) [left]{The PCR3BP};
 \node at (16.8,5.2) [left]{$H_{\text{PCR3BP}} = \frac{1}{2}|p|^2 - \frac{1-\mu}{|q-E|} - \frac{\mu}{|q-M|} + q_1 p_2 - q_2 p_1$};
 
\node at (12,2.5) [left]{The rotating Kepler problem};
 \node at (12.2,1.7) [left]{$H_{\text{RKP}} = \frac{1}{2}|p|^2 - \frac{1}{|q|}  + q_1 p_2 - q_2 p_1$};
 \node at (11.2, 4) [left]{$\mu \rightarrow 0$};
\draw [->](12.5,4.5) to  (10,3);
 
\node at (18,2.5) [left]{The Euler problem};
 \node at (18.95,1.7) [left]{$H_{\text{Euler}} = \frac{1}{2}|p|^2 - \frac{1-\mu}{|q-E|} - \frac{\mu}{|q-M|}$};
  \node at (17.5,4) [left]{$q_1p_2 - q_2 p_1 \rightarrow 0$};
\draw [-> ](13.5, 4.5) to  (16,3);

\end{tikzpicture}
 \end{center}
 \caption{Two special cases of the PCR3BP}
\label{figure1}
\end{figure}
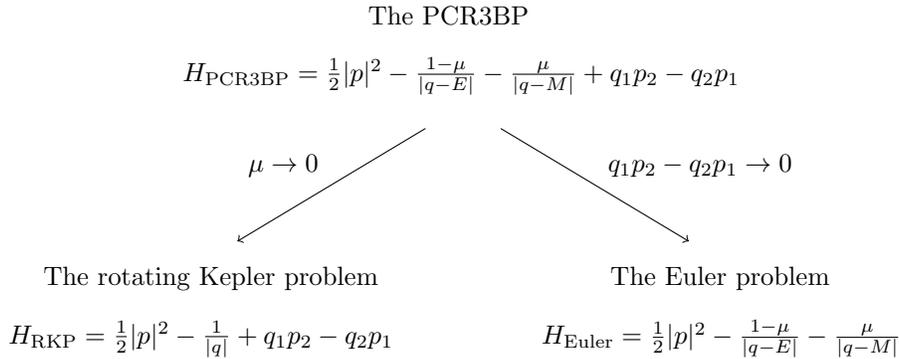

 Applying Poincar\'e's strategy to Euler's idea naturally leads to the question: \textit{Can one find periodic orbits in the PCR3BP by homotoping periodic orbits in the Euler problem? If it is possible, how different are they from the ones  obtained from periodic orbits in the rotating Kepler problem?} In this paper, we study this question by  computing the invariants $\mathcal{J}_1$  and~$\mathcal{J}_2$, 
which recently introduced by
Cieliebak--Frauenfelder--van Koert in~\cite{invariant}, for periodic orbits in the Euler problem.
For definition and properties of the invariants, see Section 2.
These invariants serve as obstructions to the existence of a homotopy of periodic orbits. 
We then   compare them with the ones for the rotating Kepler problem provided in \cite{Kim}.

\medskip \noindent 
\textbf{Theorem.} \textit{Let $\gamma^{\text{RKP}}$ and $\alpha^{\text{Euler}}$ be periodic orbits in the rotating Kepler problem and Euler problem, respectively, which are freely homotopic. 
Then the Cieliebak-Frauenfelder-van Koert invariants   agree on $\gamma^{\text{RKP}}$ and~$\alpha^{\text{Euler}}$.}

 \bigskip
 \noindent
\textit{Remark.} Recall that the Conley-Zehnder indices of periodic orbits in the rotating Kepler problem and Euler problem agree \cite{RKP, Kim2}. In view of this and   the previous theorem we expect that there might be  more intrinsic relationship between the two problems. 
In other words, there might be a hope that during a   homotopy of  periodic orbits from  the Euler problem via the PCR3BP to the rotating Kepler problem, no birth-death bifurcation occurs, see Figure \ref{sdf34}. 

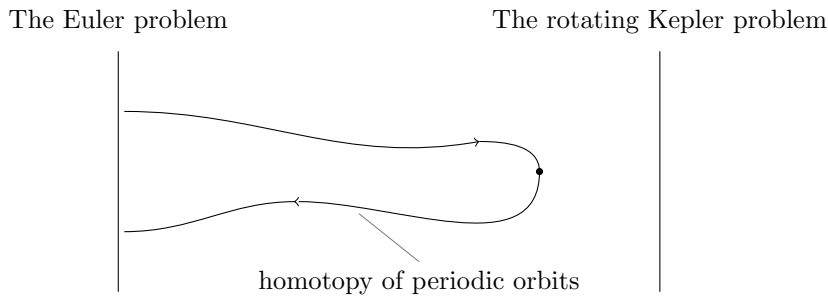
\begin{figure}[h]
\begin{center}
 \begin{tikzpicture}[scale=0.8]

\draw (-5,-2) to (-5,2);
\draw ( 4,-2) to (4 ,2);
\node at (-5, 2.5) {The Euler problem};
\node at (4,2.5) {The rotating Kepler problem};
\draw[->] (-4.9, 1) to [out=0, in= 190](1, 0.5);
\draw  ( 1, 0.5) to [in=90, out= 0](2, 0);
 
\draw (2,0) to [out=270, in=0] (-2, -0.5);
\draw[ >-] (-2,-0.5) to [out=180, in=0] (-4.9, -1);
  \draw[fill] (2,0) circle [radius=0.05];
\draw[gray] (-1, -0.7) to (-0, -1.5);
\node at (0, -1.5) [below] {homotopy of periodic orbits};

\end{tikzpicture}
 \end{center}
 \caption{A possible birth-death bifurcation. Due to the result of the theorem, we expect that this does not happen.}
\label{sdf34}
\end{figure}

\noindent
\textit{Remark.} The $\mathcal{J}_1, \mathcal{J}_2$ invariants are defined for periodic orbits near either $E$ or $M$. 
In \cite{CFZ19}, Cieliebak-Frauenfelder-Zhao define new invariants for periodic orbits which encircle both primaries. 
An explicit example for such invariant are provided in \cite{Kim19a}.

\bigskip

\noindent
\textbf{Acknowledgements:} The author would like to thank    his supervisor   Urs Frauenfelder for interesting him in this subject. He is  also grateful  to Felix Schlenk and Holger Waalkens for valuable discussions, to    the unknown referee for valuable comments   and to   the Institute for Mathematics of University of Augsburg for providing a supportive research environment. This research was supported  by DFG  grants CI 45/8-1 and FR 2637/2-1. The results of this paper will form part of the author's PhD thesis.

\section{$J^+$-like invariants for planar closed curves}\label{secinvariants}

In this section we recall some results on periodic orbits in planar Stark-Zeeman systems \cite{invariant} and   generalize slightly them. 

\subsection{Planar Stark-Zeeman systems} \label{szstyemsd}

Let $U \subset (\R^2, q_1, q_2)$ be an open neighborhood of the origin whose closure is diffeomorphic to the unit closed disk.  The standard symplectic form $\omega_0$ on $(T^* U, q_1, q_2, p_1, p_2)$ is given by $dp_1 \wedge dq_1 + dp_2 \wedge dq_2$. Fix a 2-form  $\sigma_B := B(q) dq_1 \wedge dq_2 \in \Omega^2( U)$ which is called a \textit{magnetic form}.  Since the second de Rham cohomology group of $U$ is trivial, the 2-form $\sigma_B$ is exact, i.e., one finds a 1-form $\alpha_A=A_1(q) dq_1 + A_2(q) dq_2 \in \Omega^1(U)$ such that $d\alpha_A = \sigma_B$. Note that $A_1$ and $A_2$ satisfy
$$
B = \frac{\partial A_2}{\partial q_1} - \frac{\partial A_1}{\partial q_2}.
$$
The \textit{twisted symplectic form} on $T^*U$ is defined to be
$$
\omega_B := \omega_0 + \pi^* \sigma_B,
$$
where $\pi :T^* U \rightarrow U$ is the footpoint projection. Note that if the smooth function $B=B(q)$ is identically zero, then the twisted symplectic form equals the standard symplectic form.

Let $V_1:U \rightarrow \R$ be a smooth function and consider the Hamiltonian
$$
H:T^*(U \setminus \left \{(0,0)\right\}) \rightarrow \R,\quad H(q,p)=\frac{1}{2}|p|^2 - \frac{1}{|q|} +V_1(q).
$$
Define the diffeomorphism $\Phi_A : (T^*(U \setminus \left \{(0,0)\right\}), \omega_0) \rightarrow (T^*(U \setminus \left \{(0,0)\right\}), \omega_B)$  by
$$
\Phi_A(q,p):= (q,p-A(q) ),\quad \quad A(q)=(A_1(q), A_2(q) ). 
$$
One can easily see that the map $\Phi$ is in fact a symplectomorphism which implies that $\Phi$ transforms the Hamiltonian equations of $H$ with respect to the twisted symplectic form $\omega_B$ into the Hamiltonian equations of 
\begin{equation}\label{hamiltoniansz}
H_A (q,p):= \Phi_A^* H(q,p) = \frac{1}{2}|p-A(q)|^2  - \frac{1}{|q|} +V_1(q)
\end{equation}
with respect to the standard symplectic form $\omega_0$.

\begin{Definition} \rm (Cieliebak-Frauenfelder-van Koert, \cite{invariant}) A \textit{planar Stark-Zeeman system} is a Hamiltonian system associated to a Hamiltonian of the form  \eqref{hamiltoniansz} defined on $(T^*(U \setminus \left \{(0,0)\right\}), \omega_0)$.  
\end{Definition}

In other words, a planar Stark-Zeeman system describes the dynamics of a particle which moves along integral curves of the Hamiltonian flow $X_{H_A}$ which is defined by $\iota_{X_{H_A}} \omega_0 = -dH_A$.

\begin{Remark} \label{remarksteidhi} \rm  A planar Stark-Zeeman system with $A \equiv 0$ is often called a  \textit{planar Stark system}. 
\end{Remark}

\subsection{Periodic orbits in Stark-Zeeman systems}\label{secperSZ} In the following we fix a Stark-Zeeman system $(\sigma_B, \alpha_A , H:=H_A)$.  Let $c_1 \in \R \cup \left \{ \infty \right \}$ be the energy level such that for any $c<c_1$ the following are satisfied:
\begin{itemize}
\item $c$ is a regular value of the \textit{effective potential} $V(q):= - \tfrac{1}{|q|} +V_1(q)$; and
\item the \textit{Hill's region} $\mathcal{K}_c := \left \{ q \in U \; | \; V(q) \leq c \right \}$ contains a unique bounded component $\mathcal{K}_c^b$ whose closure is diffeomorphic to the unit closed disk.
\end{itemize}
Consider  a family of periodic orbits $q^s : S^1 \rightarrow \mathcal{K}_c^b$, $s \in (-\epsilon, \epsilon)$ in a planar Stark-Zeeman system of   energies less than $c_1$. We assume that  the orbits are allowed to pass through the origin. Indeed, this is always  possible by regularizing the system, see \cite[Section 3]{invariant}. Consider a point $q_0 = q^0(t_0)$.  We distinguish the following three cases:

\;\;

\textit{Case 1.} $q_0 \in \partial \mathcal{K}_c^b$.\\
In this case, the point $q_0$ satisfies $V(q_0)=c$ and then $p=A(q)$. In particular, in view of the Hamiltonian equations, the velocity vanishes at $t=t_0$: $\dot{q}^0 (t_0) =0$. We further assume that the component of $\dot{q}^s(t_0)$ which is normal to $\nabla V( q^s(t_0) )$ changes sign through the point $q_0$.

\begin{enumerate}[label=(\roman*)]
 \item if $B(q_0) \neq 0$, then   by \cite[Lemma 1]{invariant}, we see that the orbit $q^0$ has a cusp at $t=t_0$ and during the family $q^s$, birth or death of an exterior loop happens through the cusp, see Figure \ref{cuspinfty};
\item if $B(q_0) =0$, then imitating the proof of \cite[Lemma 1]{invariant}, we prove
\end{enumerate}
\begin{Lemma}  \label{lemmabakc1}  If the orbit $q^0$ touches the boundary $\partial \mathcal{K}_c^b$, then it bounces back from the boundary, see Figure \ref{lemma25}.
\begin{figure}[h]
 \centering
 \includegraphics[width=1.0\textwidth, clip]{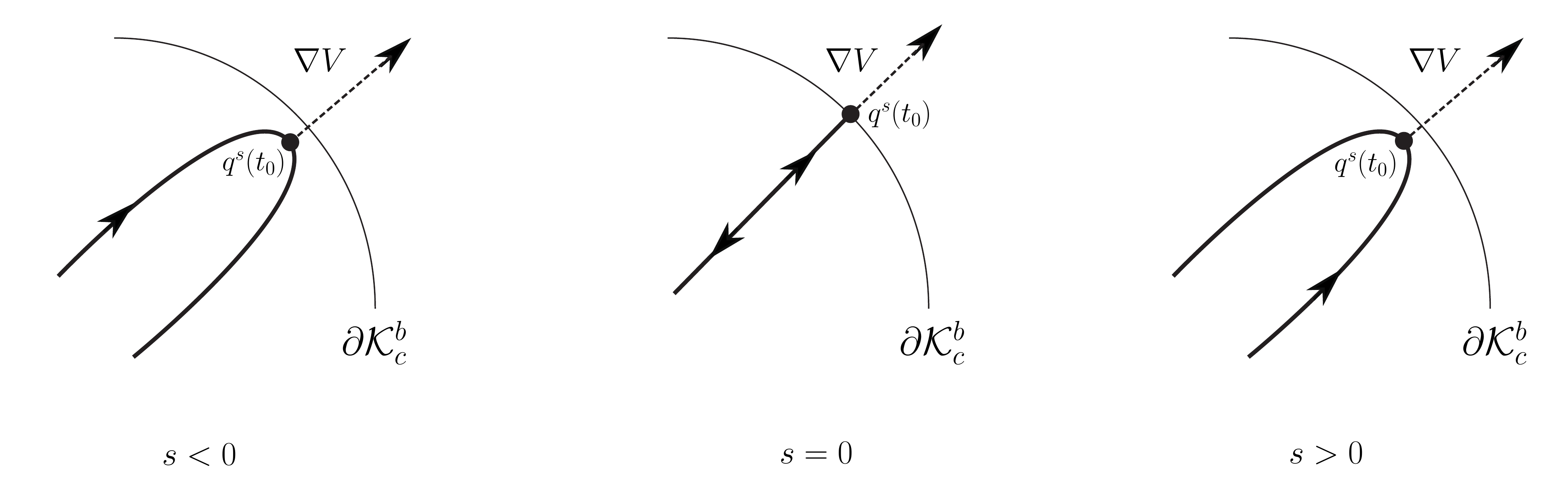}
 \caption{Passing through a touch the boundary of the  Hill's region for the case $B(q_0)=0$}
 \label{lemma25}
\end{figure}
\end{Lemma}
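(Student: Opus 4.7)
\medskip\noindent\textbf{Proof proposal.}

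The plan is to imitate the Taylor-expansion argument of \cite[Lemma 1]{invariant}, pushing the computation one derivative further in order to detect the cancellation induced by the hypothesis $B(q_0)=0$.

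First I would combine conservation of energy along $q^0$,
\[
\tfrac{1}{2}|p^0(t) - A(q^0(t))|^2 + V(q^0(t)) = c,
\]
with the boundary identity $V(q_0) = c$ to conclude $p^0(t_0) = A(q_0)$. Together with Hamilton's equation $\dot q = p - A(q)$ obtained from \eqref{hamiltoniansz} this immediately yields $\dot q^0(t_0) = 0$. Differentiating the same Hamilton equation and using $\partial_{q_1} A_2 - \partial_{q_2} A_1 = B$ I obtain the Newton-type equation
\[
\ddot q = -B(q)\, J\, \dot q - \nabla V(q),
\]
where $J$ is a $\pi/2$ rotation of the plane. Evaluating at $t_0$ gives the key vector
\[
\ddot q^0(t_0) = -\nabla V(q_0),
\]
which is non-zero (since $c$ is a regular value of $V$) and points into $\mathcal{K}_c^b$. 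Differentiating once more and again substituting $\dot q^0(t_0)=0$ leaves only
\[
\dddot q^0(t_0) = -B(q_0)\, J\, \ddot q^0(t_0),
\]
which vanishes under the standing hypothesis $B(q_0)=0$.

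Consequently the Taylor expansion at $t_0$ reads
\[
q^0(t_0+\tau) = q_0 - \tfrac{1}{2}\nabla V(q_0)\, \tau^2 + O(\tau^4),
\]
so the planar curve traced by $q^0$ exhibits a smooth turning point at $q_0$: it approaches along the outward normal $+\nabla V(q_0)$, arrives with zero velocity, and returns along the inward direction $-\nabla V(q_0)$. In the companion case $B(q_0)\neq 0$ handled in \cite[Lemma 1]{invariant}, the non-zero vector $\dddot q^0(t_0)$ is transverse to $\ddot q^0(t_0)$ and is precisely what opens the cusp, enabling the birth of an exterior loop; its vanishing here collapses the cusp to an ordinary reflection, which is exactly the ``bounce'' depicted in Figure \ref{lemma25}. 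Combined with the transversality hypothesis that the component of $\dot q^s(t_0)$ normal to $\nabla V(q^s(t_0))$ changes sign at $s=0$, this determines the local picture of the family $q^s$ near the touch point.

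The main obstacle I foresee is the passage from the third-jet data to the global-in-$s$ qualitative behaviour asserted by the lemma. Vanishing of $\dddot q^0(t_0)$ kills the cusp at the level of the third jet, but one must still verify that the higher-order term, which a direct computation shows equals $-D^2V(q_0)\,\ddot q^0(t_0)$ at fourth order under our hypothesis, can neither reinstate a cusp nor seed a loop under perturbation in $s$. Here I would follow the normal-form reduction of \cite[Lemma 1]{invariant} with the parameter $B(q_0)$ specialised to zero: the cusp catastrophe degenerates to an ordinary fold, which is a clean bounce, and the transversality hypothesis on the tangential component of $\dot q^s(t_0)$ supplies the required genericity to extract the desired family picture.
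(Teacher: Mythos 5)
Your proposal is correct and takes essentially the same route as the paper: both imitate the Taylor-expansion argument of \cite[Lemma 1]{invariant} at the touching point, where $\dot q^0(t_0)=0$, $\ddot q^0(t_0)=-\nabla V(q_0)\neq 0$, and the cusp-opening third-order term is absent because $B(q_0)=0$. The paper phrases the same computation through the nearby family members $q^{s_0}$ (parabolas whose width shrinks as the transverse velocity component $b$ tends to zero, the sign change of $b$ giving the orientation reversal), and it disposes of the higher-order persistence issue with the same brief remark that you flag as the remaining obstacle.
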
 
\begin{proof} The proof is nothing but a rephrase of that of \cite[Lemma 1]{invariant}. Without loss of generality, we may assume that $t_0=0$. Consider an orbit $ q^{s_0}$ in a family $q^s$ near $q^0$. In view of the Hamiltonian equations we have
\begin{equation*}
\ddot{q}^{s_0} (0)=- \frac{\partial V}{\partial q} (q^{s_0} (0)) \quad \text{ and } \quad  \dddot{q}^{s_0}(0) = -D^2 V(q^{s_0}(0)) \dot{q}^{s_0}(0).
\end{equation*}
Since $c$ is a regular value of the effective potential $V$, the second derivative $\ddot{q}^{s_0}(0)$ does not vanish. Moreover, since $q^{s_0}$ is close enough to $q^0$,  we see that $\dddot{q}^{s_0}(0)$ is very small. Choose complex coordinates in which $q^{s_0}(0)=0$, $\dot{q}^{s_0} (0) =a+ib$ and $\ddot{q}^{s_0} (0)= 2$. The Taylor expansion of $q^{s_0}$ in these coordinates   is then given by
\begin{equation*}
q^{s_0} (t)  =  q^{s_0} (0) + \dot{q}^{s_0} (0) t + \frac{1}{2} \ddot{q}^{s_0} (0) t^2  + O(t^3) =  (at+t^2) +ibt  + O(t^3).
\end{equation*}
 Writing $z=x+iy$ and ignoring the terms whose orders are higher than 2, we obtain
\begin{equation*} 
q^{s_0} ~:~ x =  \frac{1}{b^2} \bigg(       y + \frac{ab}{2}     \bigg)^2  - \frac{a^2}{4} 
\end{equation*}
for $b \neq 0$ and $q^{s_0}(t) = ( at+t^2, 0)$ for $b=0$. This shows that for  $b \neq 0$, the orbit $q^{s_0}$ is  a parabola and its width becomes narrower as $b$ tends to zero. Finally, the width becomes zero which means that  the orbit $q^0$ is the ray $\theta=0$  in the chosen coordinates. Moreover, the assumption that the component of $\dot{q}^s(0)$ normal to $(\partial V/\partial q)( q^{s}(0))$ changes sign through $q_0$ implies that $b$ changes sign. Therefore, the orientation of the orbit changes through the point $q_0$. This phenomenon persists under higher order perturbations.  This completes the proof of the lemma.
\end{proof}

\;\;

\textit{Case 2.} $q_0 =(0,0)$.\\
The orbit $q^0$ collides with the origin at time $t=t_0$.
\begin{enumerate}[label=(\roman*)]
 \item if $B(q_0) \neq 0$, then  by \cite[Lemma 2]{invariant} the orbit $q^0$ has a cusp at $t=t_0$ and birth or death of a  loop around the origin happens through the cusp, see Figure  \ref{cusporigin};
\item if $B(q_0) =0$, then  it is obvious that  the orbit $q^0$  bounces back from the origin, see Figure \ref{lemma24}.
\begin{figure}[h]
 \centering
 \includegraphics[width=1.0\textwidth, clip]{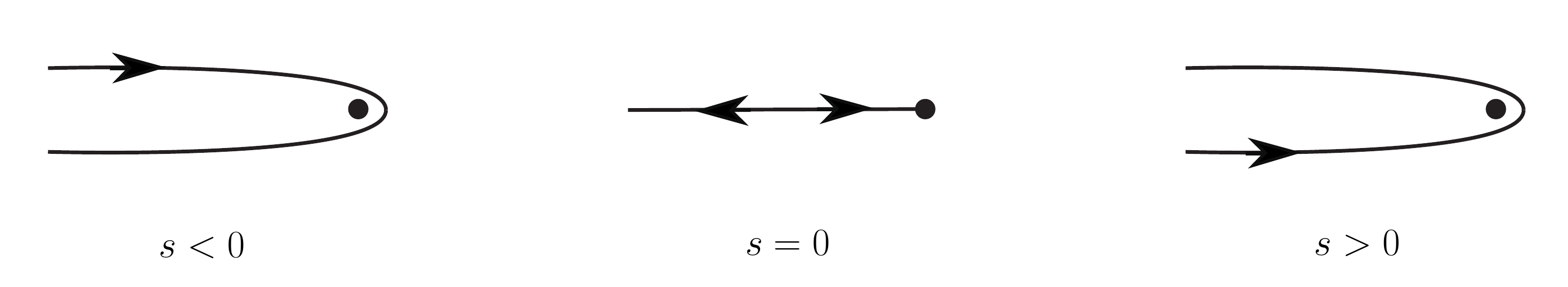}
 \caption{Passing through the origin}
 \label{lemma24}
\end{figure}   
\end{enumerate}

\textit{Case 3.} $q_0 \neq 0$ lies in the interior of $\mathcal{K}_c^b$.\\
In both cases that $B(q_0) \neq 0$ and $B(q_0) =0$,   the orbit $q^0$ is an immersion near $t=t_0$.

\bigskip

We now discuss the case $B \equiv 0$ in more details.  Let $q^s$ be a family of periodic orbits in Stark systems. The previous arguments show that an additional loop will not be attached to $q^s$.  Moreover, we see that the absolute value of  the  winding number of periodic orbits around the origin does not change during the family, but  the  sign  might change.

\begin{Lemma}\label{lemmainverse} Assume that the orbit $q=q^0$ admits an  inverse self-tangency at one point, say $q(t_0)=q(t_1)$, $t_0 \neq t_1$. Then every point on the orbit $q$ is an inverse self-tangency. Moreover, either $q$  touches the boundary of the Hill's region or  it collides with the origin.
\end{Lemma}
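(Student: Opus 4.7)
The plan is to exploit the time-reversibility of a Stark system. Since $B \equiv 0$, I would take the gauge $A \equiv 0$, so that the Hamiltonian reduces to $H(q,p) = \tfrac12|p|^2 + V(q)$ and Hamilton's equations give $\dot q = p$. The anti-symplectic involution $\rho(q,p) = (q,-p)$ then preserves $H$ and conjugates the flow $\phi_t$ to $\phi_{-t}$; this is the structural input I would use.

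First I would translate the inverse self-tangency hypothesis into phase space: since $\dot q = p$, the vectors $p(t_0)$ and $p(t_1)$ are antiparallel at the common base point $q(t_0) = q(t_1)$, while conservation of energy at that point forces $|p(t_0)| = |p(t_1)|$; hence $p(t_1) = -p(t_0)$, i.e.\ $(q(t_1),p(t_1)) = \rho(q(t_0),p(t_0))$. Applying the identity $\phi_s \circ \rho = \rho \circ \phi_{-s}$ to $(q(t_0),p(t_0))$ would then yield
$$(q(t_1+s),p(t_1+s)) \;=\; \rho\bigl(q(t_0-s),p(t_0-s)\bigr) \;=\; \bigl(q(t_0-s),\,-p(t_0-s)\bigr)$$
for every $s$. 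Setting $\tau_0 := (t_0+t_1)/2$, this is equivalent to the reflection symmetry
$$q(\tau_0+r) \;=\; q(\tau_0-r), \qquad p(\tau_0+r) \;=\; -p(\tau_0-r) \qquad \text{for all } r.$$

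From here the first assertion of the lemma is immediate: whenever $r$ is not an integer multiple of the half-period, the two distinct times $\tau_0 \pm r$ visit the same configuration-space point with opposite velocities, so every point on the orbit $q$ is an inverse self-tangency.

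For the second assertion I would evaluate the reflection identity at $r = 0$ to obtain $p(\tau_0) = 0$, so the kinetic energy vanishes at $\tau_0$ and energy conservation forces $V(q(\tau_0)) = c$. Since $V$ is strictly less than $c$ on the interior of the bounded component $\mathcal{K}_c^b$ and is singular with value $-\infty$ at the origin, only two possibilities remain: either $q(\tau_0) \in \partial\mathcal{K}_c^b$, in which case $q$ touches the boundary of the Hill's region, or $q(\tau_0) = 0$, in which case $q$ collides with the origin. The only mildly delicate step will be the collision case, where the Hamiltonian is undefined and one must interpret the whole argument in the Levi-Civita regularized system; since the reversing symmetry $\rho$ lifts to the regularized phase space, the reflection identity persists there and its fixed point is read back in physical coordinates as a genuine collision orbit.
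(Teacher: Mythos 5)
Your argument is correct and takes essentially the same route as the paper: both exploit the anti-symplectic involution $(q,p)\mapsto(q,-p)$ of the Stark Hamiltonian to conclude that the orbit coincides with its time reversal up to a time shift, so that every point is an inverse self-tangency and the symmetric (fixed) times must be brake points on $\partial\mathcal{K}_c^b$ or collisions with the origin. Your explicit reflection identity about $\tau_0=(t_0+t_1)/2$ merely makes quantitative what the paper deduces from uniqueness of O.D.E. solutions together with its Case 1/Case 2 analysis of the bounce-back points, with the collision case handled in both treatments only at the level of the regularized system.
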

\begin{proof} Recall that the Hamiltonian of a Stark system   has the form
$$
H(q,p) = \frac{1}{2} |p|^2 + V(q), \quad \quad V(q) := - \frac{1}{|q|} + V_1(q).
$$
Since a Stark system admits the anti-symplectic involution $(q,p) \mapsto (q,-p)$, it is invariant under   time reversal.  In view of the hypothesis of the lemma and the uniqueness theorem of O.D.E., the orbit $q$ coincides with $\overline{q}$, where $\overline{q}(t):=q(-t)$, up to a suitable time translation. This is the case only if   all points on $q$ are inverse self-tangencies except for two points at which the orbit $q$  bounces back. In view of the arguments in Cases 1 and 2 it follows that each of  these two points either touches the boundary of the Hill's region or collides with the origin. This finishes the proof of the lemma.
\end{proof}
\noindent In view of the previous lemma we define
\begin{Definition}\label{deforbits} \rm A periodic orbit in a planar Stark-Zeeman system with $B\equiv 0$ which has inverse self-tangencies is called 
\begin{itemize}
\item a \textit{brake-brake orbit} if the satellite touches the boundary of the Hill's region at two (distinct) points;
\item a \textit{collision-collision orbit} if the satellite collides with the origin twice (with distinct momenta); and
\item a \textit{brake-collision orbit} if the satellite touches the boundary of the Hill's region and also collides with the origin, see Figure \ref{lemddmaddd2dddddd4ddd}.
\end{itemize}
\begin{figure}
 \centering
 \includegraphics[width=1.0\textwidth, clip]{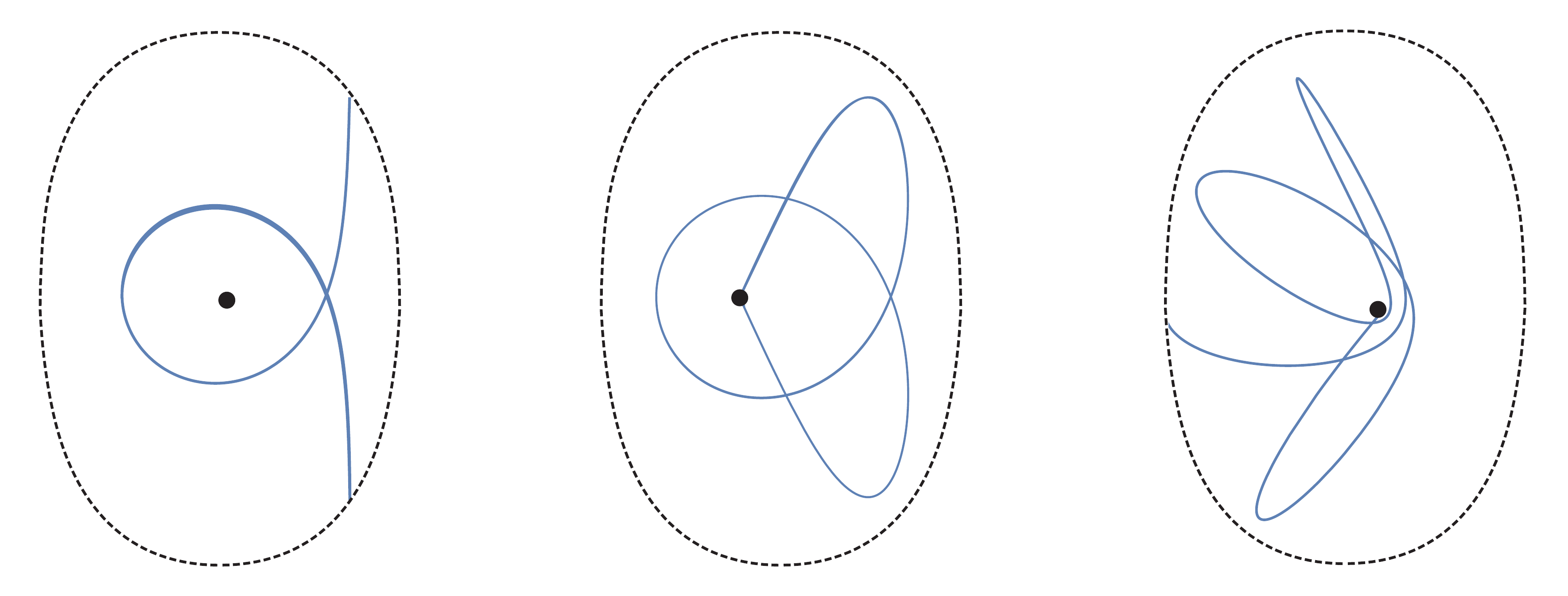}
 \caption{A brake-brake orbit (left), a collision-collision orbit (middle), and a brake-collision orbit (right)}
 \label{lemddmaddd2dddddd4ddd}
\end{figure}  
\end{Definition}

\subsection{Invariants for Stark-Zeeman homotopies}

In the previous section we defined the notion of generic families of periodic orbits in Stark-Zeeman systems. We now introduce invariants for such families.

\subsubsection{The Whitney-Graustein theorem} \index{Whitney-Graustein theorem}

 Let $\gamma:S^1 \rightarrow \C$  be an immersion.   By abuse of notation  we also call the image $K:= \text{im}(\gamma)$ of $\gamma$ an immersion.     The \textit{rotation number} $\text{rot}(\gamma)$ of $\gamma$ is defined to be the degree of the map
$$
S^1 \longrightarrow S^1 , \quad \quad t \mapsto \frac{ \gamma'(t)}{| \gamma'(t)|}.
$$
A one-parameter family of immersions is called a \textit{regular homotopy}. Note that the rotation number is invariant under regular homotopies.

\begin{Theorem} { \rm (Whitney-Graustein, \cite{Whitney})} There exists a bijection between regular homotopy classes of immersions from $S^1$ to $\C$ and the set of integers, where the correspondence is given by
$[ \gamma ] \mapsto \text{rot}(\gamma)$.
\end{Theorem}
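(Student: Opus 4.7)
\emph{Proof proposal.} The strategy is to verify the two directions of the claimed bijection separately, and the easy direction is the invariance of the rotation number under regular homotopy. Given a regular homotopy $\gamma_s$, the unit tangent field $\tau_s(t) := \gamma_s'(t)/|\gamma_s'(t)|$ is a continuous family of continuous maps $S^1 \to S^1$, so $\text{rot}(\gamma_s) = \deg(\tau_s)$ is a continuous integer-valued function of $s$ and hence locally constant. Surjectivity is also immediate: for each $n \in \Z$ one exhibits an explicit immersion with rotation number $n$, for instance the $n$-fold wrapped circle for $n \ne 0$ and a figure-eight for $n = 0$.

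The nontrivial step is showing that any immersion $\gamma$ with $\text{rot}(\gamma)=n$ is regularly homotopic to the standard model $\gamma_n$. After reparametrizing both curves by arc length and rescaling so that each has domain $S^1 = \R/2\pi\Z$, the unit tangents $\tau$ and $\tau_n$ are maps $S^1 \to S^1$ of common degree $n$. Lifting to $\R$, write $\tau(t) = e^{i\theta(t)}$ and $\tau_n(t) = e^{i\theta_n(t)}$ with $\theta(2\pi) - \theta(0) = \theta_n(2\pi) - \theta_n(0) = 2\pi n$, and consider the linear interpolation $\theta_s(t) := (1-s)\theta(t) + s\theta_n(t)$. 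The natural candidate for a homotopy of immersions is then
\begin{equation*}
\gamma_s(t) \;:=\; \int_0^t e^{i\theta_s(u)}\, du ,
\end{equation*}
whose derivative $e^{i\theta_s(t)}$ never vanishes, so $\gamma_s$ is automatically a regular curve on $[0, 2\pi]$. The only thing that can fail is the closure condition that $\gamma_s$ descend to $S^1$, namely
\begin{equation*}
\Phi(s) \;:=\; \int_0^{2\pi} e^{i\theta_s(u)}\, du \;=\; 0,
\end{equation*}
which holds at $s=0,1$ but generically not in between.

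The key step is therefore to modify the interpolation so that $\Phi(s) \equiv 0$, without disturbing the endpoints or the degree. Following Whitney, I would enlarge the homotopy by a finite-dimensional correction $\theta_s^{a,b}(t) := \theta_s(t) + a\sin t + b\cos t$ with real parameters $(a,b)$ that vanish at $s = 0, 1$. Such a correction has zero winding and so preserves $\deg(\tau_s) = n$, while the differential of $(a,b) \mapsto \int_0^{2\pi} e^{i(\theta_s+a\sin u+b\cos u)}\, du$ at $(a,b)=(0,0)$ is a $\C$-linear map that one checks to be nondegenerate for generic $\theta_s$; the implicit function theorem then produces smooth parameters $(a(s),b(s))$ solving $\Phi \equiv 0$. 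At any isolated time $s_0$ where this differential happens to be singular, one temporarily grafts in and later removes a pair of small cancelling loops along $\gamma_{s_0}$, which enlarges the correction space without altering the rotation number. The resulting genuine regular homotopy connects $\gamma$ to $\gamma_n$. The main difficulty is precisely this closure step: arranging a single one-parameter family of tangent maps that simultaneously preserves the degree, matches the prescribed endpoint tangent maps, and satisfies the two real integral conditions $\Phi \equiv 0$ throughout, a global constraint requiring the loop insertion/removal trick to overcome the possible degeneracy of the finite-dimensional model.
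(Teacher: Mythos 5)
The paper itself offers no proof of this statement: it is quoted as the classical Whitney--Graustein theorem with a citation to Whitney's 1937 paper, so there is nothing internal to compare against. Judged on its own, your skeleton (invariance of $\mathrm{rot}$ under regular homotopy, surjectivity via standard models, and connecting an arbitrary immersion to the model by interpolating lifted tangent angles $\theta_s=(1-s)\theta+s\theta_n$ and re-integrating) is the classical approach, and the first two steps are fine.

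The gap is in the closure step, which you yourself identify as the crux. Your finite-dimensional correction $\theta_s^{a,b}=\theta_s+a\sin t+b\cos t$ hinges on the claim that the differential of $(a,b)\mapsto\int_0^{2\pi}e^{i(\theta_s+a\sin u+b\cos u)}\,du$ at $(0,0)$ is ``nondegenerate for generic $\theta_s$''; this is asserted, not proved, and it can genuinely fail along the interpolation. Moreover the set of parameters $s$ where it fails need not be isolated, so the implicit function theorem does not by itself produce a global family $(a(s),b(s))$ with $\Phi\equiv 0$, and the proposed rescue --- temporarily grafting in and later removing a pair of cancelling loops --- is not justified: cancelling a positive and a negative loop through a regular homotopy is itself a nontrivial statement of essentially the same nature as the theorem, and you do not explain how the insertion/removal is spliced into the family without disturbing the endpoints or the closure condition elsewhere. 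The standard (Whitney's) way around all of this avoids the correction entirely: set
\begin{equation*}
\gamma_s(t)\;=\;\int_0^t e^{i\theta_s(u)}\,du\;-\;\frac{t}{2\pi}\,\Phi(s),\qquad \Phi(s)=\int_0^{2\pi}e^{i\theta_s(u)}\,du ,
\end{equation*}
which is automatically closed, and whose derivative $e^{i\theta_s(t)}-\Phi(s)/2\pi$ never vanishes because $\lvert\Phi(s)\rvert<2\pi$ unless $\theta_s$ is constant --- impossible for $n\neq 0$, while the case $n=0$ needs only a small preliminary adjustment (e.g.\ arranging that no interpolated angle function is constant). Until your nondegeneracy/loop-grafting argument is replaced by something of this kind, the proof of the injectivity half is incomplete.
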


\subsubsection{Arnold's $J^+$-invariant} \index{Arnold's $J^+$-invariant}

By a \textit{generic immersion}, we mean an immersion only with transverse double points. Note that generically a homotopy through generic immersions  admits three disasters: triple intersections and direct and inverse self-tangencies. Recall that a  self-tangency is said to be \textit{direct} and \textit{inverse} if the two tangent vectors at the intersection point have the same direction and the opposite directions, respectively. A regular homotopy   $(K^s)_{s\in[0,1]}$ is called a \textit{generic homotopy} if  each $K^s$ is a generic immersion except at  finitely many $s\in (0,1)$ at which  $K^s$ admits crossings through either a triple point or a self-tangency.   

 In \cite{Arnold}, Arnold introduce three invariants for generic homotopies without the three disasters. Among them, we are interested in the $J^+$-invariant which does not change under 
\begin{itemize}
    \item [$(II^+)$] crossing through an inverse self-tangency, see Figure \ref{inverse}; and \index{disaster!self-tangency}
   \item  [$(III)$] crossing through a triple point, see Figure \ref{triple}. \index{disaster!triple point}
\end{itemize}
However, it behaves sensitively to direct self-tangencies: under a positive (or a negative) crossing through a direct self-tangency, which increases (or decreases)  the number of  double points, $J^+$  is increased (or decreased) by two. Different from the rotation number, the $J^+$-invariant does not depend on the orientation. We normalize it by
\begin{equation*}
J^+(K_j) = \begin{cases} 2-2j &  j \neq 0,  \\ 0 & j=0, \end{cases}
\end{equation*}
where  $K_j$, $j \in \N \cup \left \{ 0 \right \}$,  are the \textit{standard curves}: $K_0$ is the figure eight, $K_1$ is the circle, and for each  $j \geq 2$ the curve $K_j$ is given as in the following figure. \index{Arnold's $J^+$-invariant!standard curves}
 \begin{figure}[h]
 \centering
 \includegraphics[width=0.4\textwidth, clip]{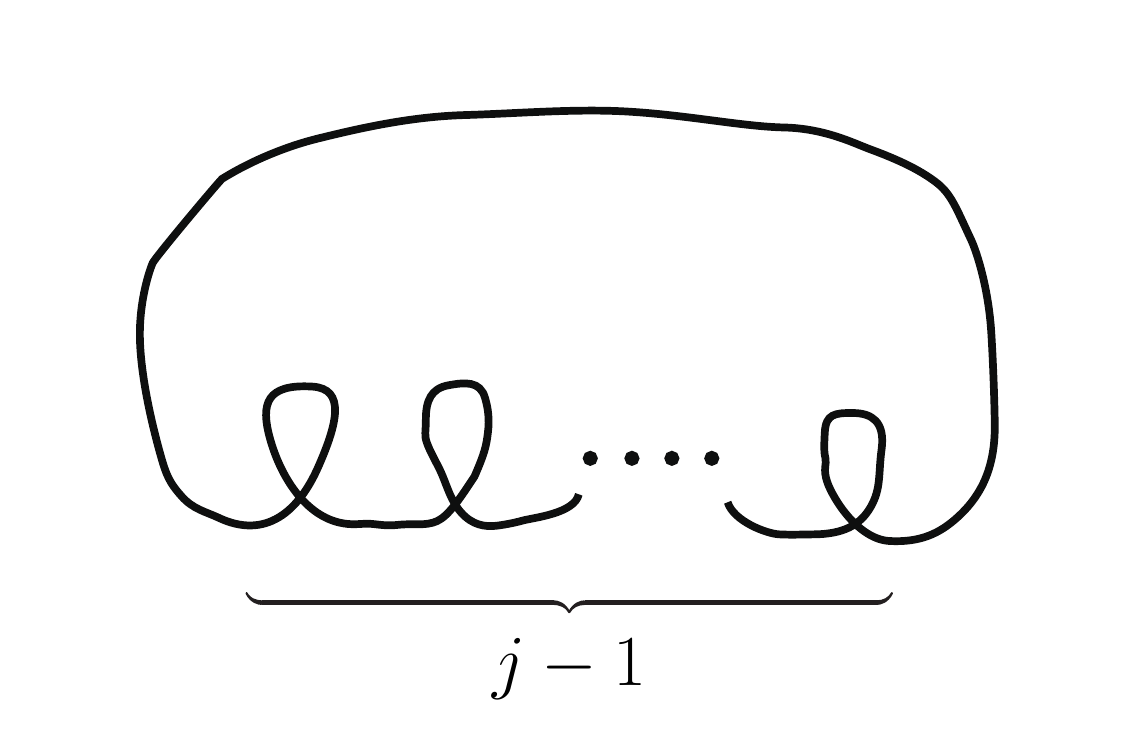}
 \label{standard}
\end{figure}
Let $K$ be a generic immersion. We assume that it is generically homotoped to $K_j$ having $N_1$ positive crossings and $N_2$ negative crossings through a direct self-tangency. Then the abovementioned rules imply
$$ 
J^+ (K) = J^+(K_j) - 2N_1 + 2N_2.
$$

\begin{Example}  In the following  we homotope $K$ to a circle. During the homotopy, we meet one negative crossing and one positive crossing through a direct self-tangency. Since the $J^+$-invariant of a circle equals zero, this shows that $J^+(K)=0$.
\begin{figure}[h]
  \centering
  \includegraphics[width=1.0\linewidth]{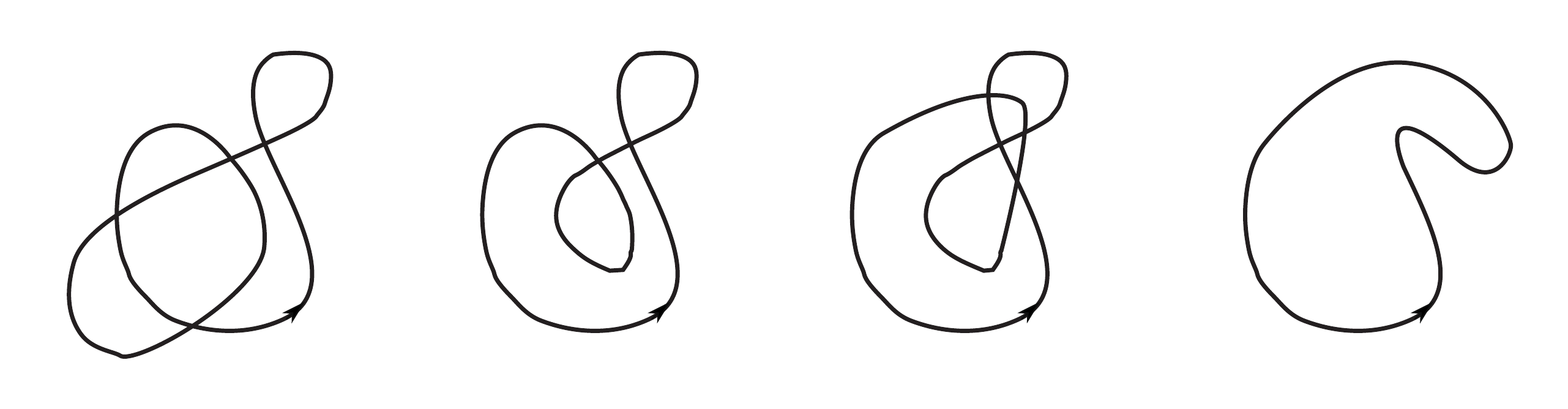}
\end{figure}
\end{Example}

It is worth pointing out that $J^+ \in 2 \Z.$ Moreover, the $J^+$-invariant is additive under connected sums, see \cite[Chapter 1]{Arnold}. In particular, in view of  $J^+(K_1) =0$ we see that the $J^+$-invariant does not change under being attached by additional loops in the unbounded component of the complement $\C \setminus K$.

\subsubsection{The Cieliebak-Frauenfelder-van Koert invariants}

Recall from Section \ref{secperSZ} that during a family of periodic orbits in a Stark-Zeeman system (with $B\neq 0$) the following disasters can happen:
\begin{itemize}
    \item [$(I_{\infty})$]  birth or death of exterior loops through cusps at the boundary of (the bounded component  of) the Hill's region, see Figure \ref{cuspinfty}; and
    \item [$(I_0)$] birth or death of loops around the origin through cusps at the origin, see Figure \ref{cusporigin}. 
\end{itemize}
Meanwhile in a Stark system these two disasters never happen and    the event $(II^+)$ is equivalent to the appearance of one of the three distinguished periodic orbits in  Definition \ref{deforbits}. 

These observations  show that  families of periodic orbits in Stark-Zeeman systems are not generic homotopies. This  led Cieliebak, Frauenfelder and van Koert to introduce  the notion of   Stark-Zeeman homotopies which represent generic 1-parameter families of (simple covered) periodic orbits in (varying) planar Stark-Zeeman systems. 

\begin{Definition}\label{defSZ}  \rm  (\cite[Definition 1]{invariant}) A 1-parameter family $(K^s)_{s\in [0,1]}$ of closed curves in $\C$ is called a \textit{Stark-Zeeman homotopy} if each member is a generic immersion  in $\C \setminus \left \{ (0,0)\right\}$ except for the disasters    $(I_{\infty})$, $(I_0)$, $(II^+)$, and $(III)$ at finitely many $s\in (0,1)$, see  Figure \ref{dislihlsijstrakyeihsed}.  \index{homotopy!Stark-Zeeman}
\end{Definition}
\begin{figure}[h]
\begin{subfigure}{0.9\textwidth}
  \centering
  \includegraphics[width=0.78\linewidth]{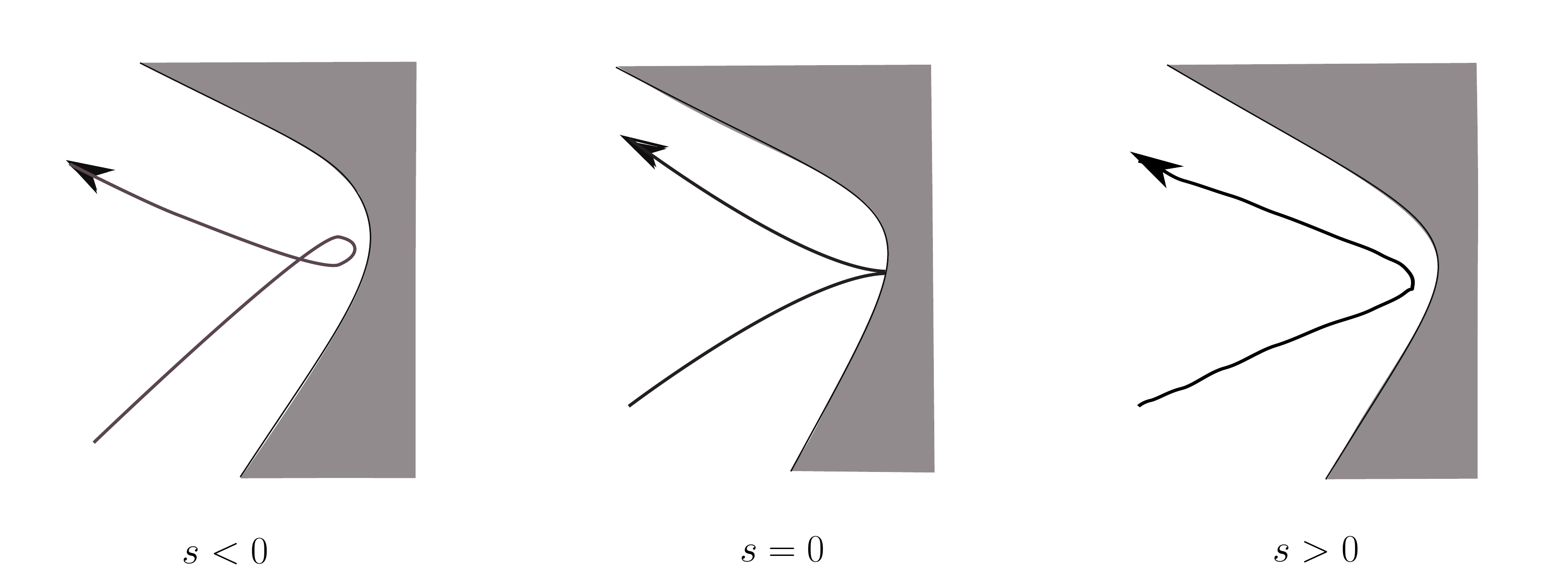}
\caption{A death of an exterior loop   through a cusp}
  \label{cuspinfty}
\end{subfigure}
\begin{subfigure}{0.9\textwidth}
  \centering
  \includegraphics[width=0.8\linewidth]{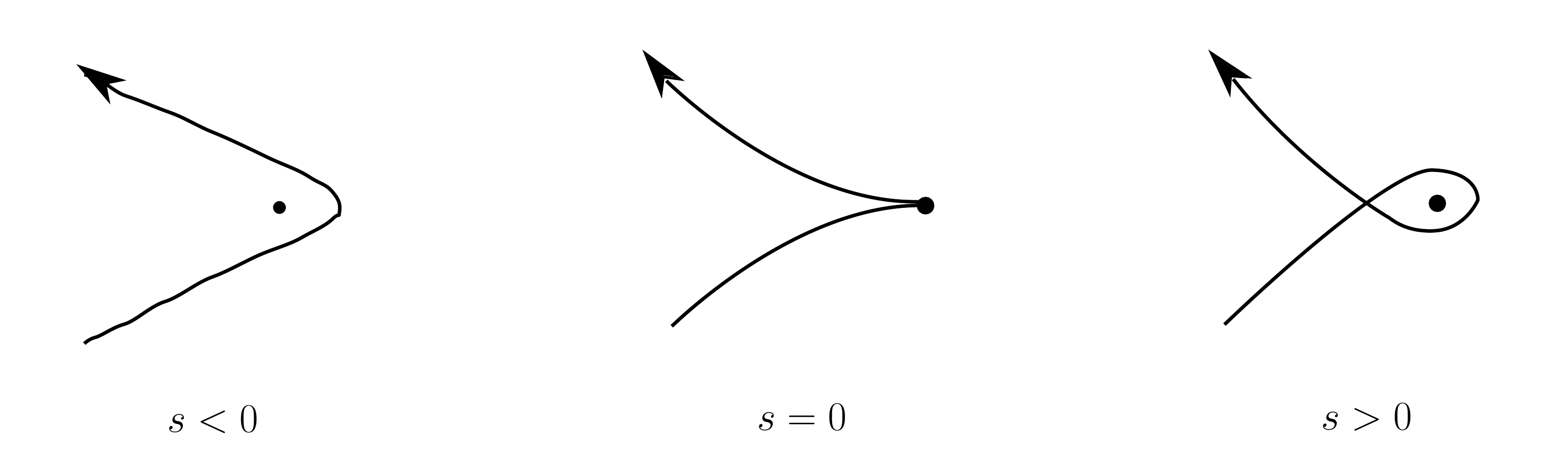}
\caption{A birth of a loop around the origin through a cusp}
  \label{cusporigin}
\end{subfigure}
\begin{subfigure}{0.9\textwidth}
  \centering
  \includegraphics[width=0.8\linewidth]{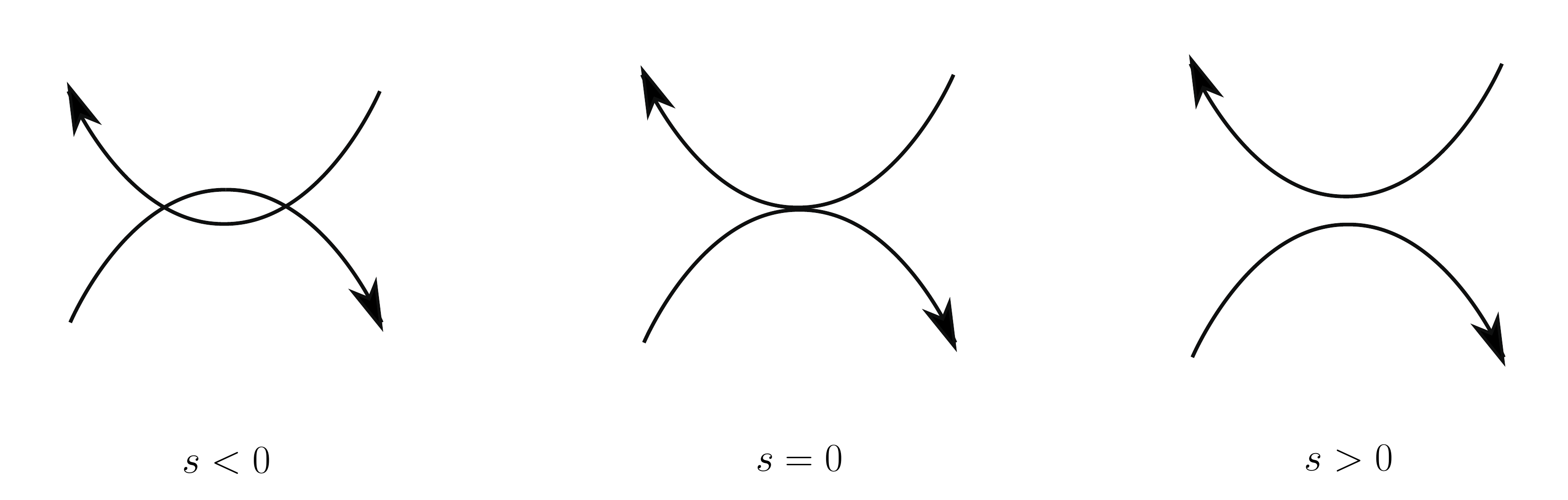}
\caption{A negative crossing through an inverse self-tangency}
  \label{inverse}
\end{subfigure}
\begin{subfigure}{0.9\textwidth}
  \centering
  \includegraphics[width=0.8\linewidth]{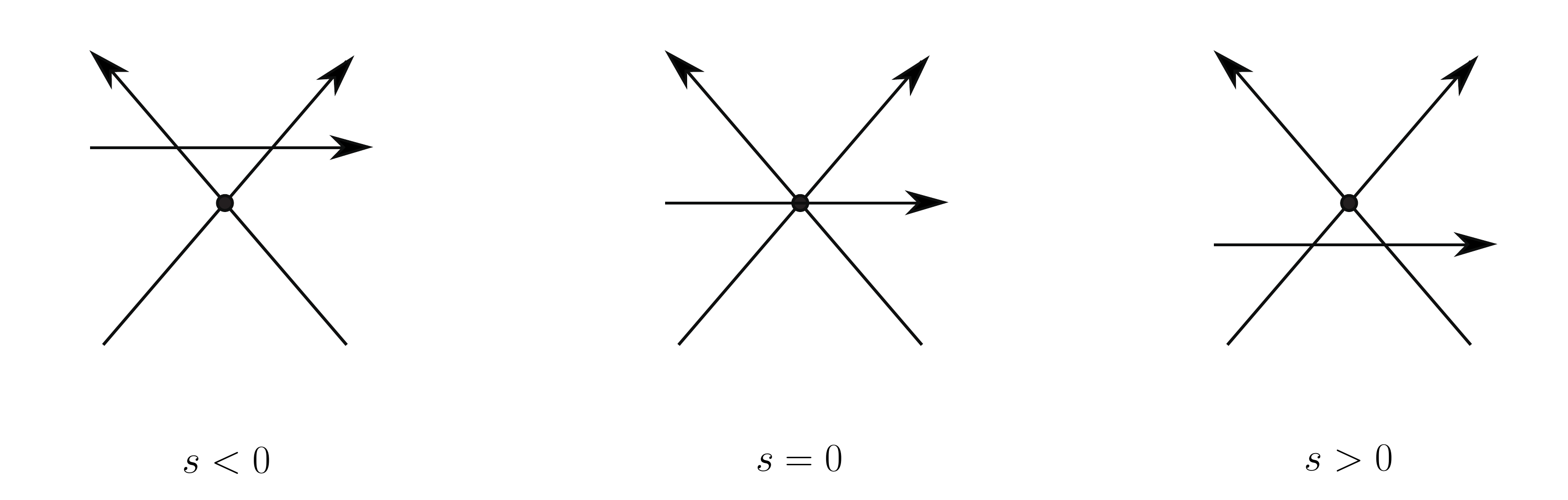}
\caption{A crossing through a triple point}
  \label{triple}
\end{subfigure}
\caption{Disasters during a Stark-Zeeman homotopy}
\label{dislihlsijstrakyeihsed}
\end{figure}

\begin{Remark}\label{rmknodirectinhismil} \rm Since periodic orbits in planar Stark-Zeeman systems are solutions of the Hamiltonian equations associated to the Hamiltonian \eqref{hamiltoniansz}, by the existence and  uniqueness theorem of O.D.E. no direct self-tangencies happen along them. 
\end{Remark}

\noindent As a special case of a Stark-Zeeman homotopy with $B \equiv 0$, we define a Stark homotopy.

\begin{Definition}\label{defSZ} \rm  A 1-parameter family $(K^s)_{s\in [0,1]}$ of closed curves in $\C$ is called a \textit{Stark homotopy} if each member is a generic immersion  in $\C \setminus \left \{ (0,0)\right\}$ except for the disasters $(II^+)$ and $(III)$ at finitely many $s\in(0,1)$, where in this case the disaster $(II^+)$ means the appearance of    the distinguished periodic orbits in  Definition \ref{deforbits}. \index{homotopy!Stark}
\end{Definition}

 Recall that Arnold's $J^+$-invariant does not change under $(I_{\infty})$, $(II^+)$ and $(III)$.  However,  it in general changes under $(I_0)$. 

 We are now in  position to define  invariants which   also do not change under the disaster $(I_0)$.   Let $K$ be a generic immersion in $\C \setminus \left \{ (0,0) \right\}$. Abbreviate by $w_0(K) \in \Z$ the  winding number of $K$ around the origin. The \textit{$\mathcal{J}_1$-invariant} of $K$ is  defined to be
$$
\mathcal{J}_1(K):= J^+(K) + \frac{w_0(K)^2}{2}. \index{Cieliebak-Frauenfelder-van Koert invariant!$\mathcal{J}_1$ invariant}
$$
Recall that the Levi-Civita mapping   is defined as the cotangent lift of the complex squaring map
$$
L : \C \setminus \left \{ (0,0 ) \right \} \rightarrow \C \setminus \left \{ (0,0 ) \right \}, \; z \mapsto z^2.
$$
 Note that the preimage $L^{-1}(K)$   is also a generic immersion in $\C\setminus \left \{ (0,0) \right \}$. By definition, the restriction of the map $L$ to the preimage $L^{-1}(K)$ is a 2-1 covering. According to the parity of the winding number $w_0(K)$, the second invariant $\mathcal{J}_2$ is defined as follows: if $w_0(K)$ is even, then the preimage $L^{-1}(K)$ consists of two connected components. We then choose one component $\widetilde{K}$ and define
$$
\mathcal{J}_2(K) := J^+(\widetilde{K}). \index{Cieliebak-Frauenfelder-van Koert invariant!$\mathcal{J}_2$ invariant}
$$
This definition is well-defined since the definition of $L$ implies that the two connected components of $L^{-1}(K)$ are related by a $\pi $-rotation in $\C \setminus \left \{ (0,0) \right \}$.  Therefore,   $\mathcal{J}_2$ does not depend on the choice of components. If $w_0(K)$ is odd, then $L^{-1}(K)$ consists of a single component and we define
$$
\mathcal{J}_2(K):= J^+( L^{-1}(K)).
$$

\begin{Proposition} { \rm (\cite[Propositions 4 and 5]{invariant})} \label{propsduiub44}  $\mathcal{J}_1$ and $\mathcal{J}_2$ are invariant under Stark-Zeeman homotopies.
\end{Proposition}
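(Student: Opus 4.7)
\emph{Proof plan.} The plan is to verify invariance event by event across the four Stark-Zeeman disasters $(I_\infty)$, $(I_0)$, $(II^+)$, $(III)$. Between disasters the family $K^s$ is a regular homotopy through generic immersions disjoint from the origin, so both $w_0(K^s)$ and the Levi-Civita lift $\widetilde K^s = L^{-1}(K^s)$ are locally constant up to regular homotopy upstairs; Arnold's theorem then gives local constancy of $J^+(K^s)$ and $J^+(\widetilde K^s)$, and hence of $\mathcal{J}_1$ and $\mathcal{J}_2$. Only the four discrete events demand work.

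For $\mathcal{J}_1$, the disasters $(II^+)$ and $(III)$ are immediate from Arnold's theorem, as they leave both $J^+$ and $w_0$ unchanged. For $(I_\infty)$, the newborn or dying loop lies in the unbounded component of $\C \setminus K$, so $w_0$ does not change; and since such a loop is a connected-sum factor equal to the standard circle $K_1$ with $J^+(K_1) = 0$, additivity of $J^+$ under connected sum gives $\Delta J^+ = 0$. The nontrivial event is $(I_0)$, where $w_0$ jumps by $\pm 1$ through cusp resolution at the origin and one new transverse double point is created. I will show that the accompanying jump in $J^+$ exactly cancels $\Delta(w_0^2/2)$, by taking a local normal form for the cusp (along the lines of the proof of \cite[Lemma 2]{invariant}), reading off the sign of the new double point, and tracking how it enters the standard expression for $J^+$ through the winding numbers of the two arcs into which it separates the curve.

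For $\mathcal{J}_2$, I pass each disaster upstairs through the Levi-Civita cover $L(z)=z^2$, which is a double cover branched only at the origin. The disasters $(II^+)$ and $(III)$ happen away from the branch point, so each lifts to a single move if $w_0$ is odd (when $\widetilde K$ is connected) or to two symmetric moves if $w_0$ is even (when $\widetilde K$ has two components), each preserving $J^+(\widetilde K)$ by Arnold's theorem. The disaster $(I_\infty)$ lifts to exterior loops attached on $\widetilde K$ at the preimages of the Hill's boundary, again preserving $J^+(\widetilde K)$ by the additivity argument used above. The decisive case $(I_0)$ disappears after lifting: a cusp of $K$ at the branch point lifts to a smooth transverse arc of $\widetilde K$ through the origin, so no disaster at all occurs upstairs and the family $\widetilde K^s$ remains a regular homotopy across the parameter value in question. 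Hence $J^+(\widetilde K^s)$ is constant and $\mathcal{J}_2(K^s)$ is invariant.

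The main obstacle is the sign bookkeeping in the $(I_0)$ case of $\mathcal{J}_1$: one has to pin down the sign of the new double point and identify the weight with which $w_0$ enters its local contribution to $J^+$, since the correction term $w_0^2/2$ is calibrated precisely to cancel this $w_0$-dependent piece. Once the local normal form is fixed, the identity $\Delta J^+ + \Delta(w_0^2/2) = 0$ reduces to an elementary sign check; the rest of the argument is an assembly of Arnold's invariance theorem and the lifting analysis above.
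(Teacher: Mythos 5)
This proposition is not proved in the paper at all; it is quoted from \cite[Propositions 4 and 5]{invariant}, so your proposal has to be judged against the actual geometry rather than against a proof in the text. Your handling of $(II^+)$, $(III)$ and $(I_\infty)$ is fine, but the decisive case $(I_0)$ --- the only place where anything nontrivial happens for $\mathcal{J}_1$ --- rests on a false premise. At a collision with $B(0)\neq 0$ the Levi-Civita lift of the orbit crosses the branch point transversally; the two nearby members of the family pass the origin on opposite sides upstairs, and downstairs (squaring a strand $z(t)=i\delta+t+\tfrac{i}{2}\beta t^{2}$) the local angular sweep about the origin is close to $-2\pi$ for one sign of $\delta$ and $+2\pi$ for the other, the latter carrying the newborn small loop. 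Hence at an $(I_0)$ event $w_0$ jumps by $\pm 2$, not by $\pm 1$: the birth of the loop is accompanied by the reversal of the whole near-collision whip around the origin, and the parity of $w_0$ is preserved (as it must be, since otherwise $\mathcal{J}_1$ would jump between integer and half-integer values, contradicting the invariance and Proposition \ref{theoremsdih4}). With your premise $\Delta w_0=\pm1$, the correction $\Delta\bigl(w_0^{2}/2\bigr)$ is a half-integer while $\Delta J^{+}$ is an even integer ($J^{+}\in 2\Z$), so the cancellation $\Delta J^{+}+\Delta(w_0^{2}/2)=0$ you plan to verify is arithmetically impossible; and even after correcting the premise, the identity one needs is $\Delta J^{+}=\mp(2w_0+2)$, a jump depending on the \emph{global} winding number, which cannot be read off from the sign of the single new double point by a local normal form. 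This is precisely why the proof in \cite{invariant} needs a global tool (a Viro-type formula for $J^{+}$ in terms of winding numbers of the complementary faces, or a comparison through the Levi-Civita cover), and it is the missing idea in your plan.

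Your $\mathcal{J}_2$ argument has the right mechanism (the cusp is resolved upstairs) but is also stated inaccurately: the preimage $L^{-1}(K)$ contains both the strand near the origin and its antipode, and at the critical parameter these pass through the origin simultaneously with opposite velocities. For $w_0$ odd the lift is connected, so the upstairs family does \emph{not} stay a regular homotopy without disasters --- it undergoes an inverse self-tangency $(II^{+})$, which happens to preserve $J^{+}$; for $w_0$ even one must observe that the new upstairs double points are intersections between the two components rather than self-intersections of the chosen component. These points are repairable, but together with the $(I_0)$ case for $\mathcal{J}_1$ they show the proposal, as written, does not establish the proposition.
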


\noindent If $w_0(K)$ is even, $\mathcal{J}_1$ and $\mathcal{J}_2$ are in general completely independent, see \cite[Proposition 7]{invariant}. However, if $w_0(K)$ is odd, then they  have the following relation:

\begin{Proposition} \label{theoremsdih4} { \rm (\cite[Proposition 6]{invariant})} If $w_0(K)$ is odd, then $\mathcal{J}_2(K) = 2\mathcal{J}_1(K)-1$.
\end{Proposition}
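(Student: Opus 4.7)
The plan is to verify the identity $\mathcal{J}_2(K) = 2\mathcal{J}_1(K) - 1$ by combining the Stark-Zeeman invariance of both sides with a direct check on a simple reference curve and a careful analysis of how the two sides transform under direct self-tangencies (the one move relevant here but not covered by Proposition~\ref{propsduiub44}).

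First I would check a base case. Choose $K$ to be the unit circle oriented counterclockwise around the origin, so $w_0(K)=1$ and $J^+(K)=0$, giving $\mathcal{J}_1(K)=1/2$. Since $w_0(K)$ is odd, the preimage $L^{-1}(K)$ is a single connected curve, and for the unit circle it is again a simple circle traced once, so $J^+(L^{-1}(K))=0$. Hence $\mathcal{J}_2(K)=0=2\cdot\frac{1}{2}-1$ and the identity holds on the base case.

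Second I would show that the validity of the identity is preserved under the operations used to connect $K$ to such a base case. Proposition~\ref{propsduiub44} covers the Stark-Zeeman disasters $(I_0), (I_\infty), (II^+), (III)$: both $\mathcal{J}_1$ and $\mathcal{J}_2$ are unchanged, so any algebraic relation between them is maintained. The remaining case is a crossing through a direct self-tangency of $K$, which is forbidden for physical solutions by Remark~\ref{rmknodirectinhismil} but unavoidable in a generic regular homotopy. Under such a crossing $J^+(K)$ changes by $\pm 2$; since $w_0(K)$ is odd, the two preimages $p_1 = -p_2 \in L^{-1}(p)$ of the tangency point $p$ are distinct points of $\tilde{K} := L^{-1}(K)$, and the local 2-to-1 covering structure of $L$ makes each of them a direct self-tangency of $\tilde{K}$ of the same sign as the original. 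Therefore $J^+(\tilde{K})$ changes by $\pm 4 = 2\cdot(\pm 2)$, precisely what is needed to keep the identity $J^+(\tilde{K}) = 2J^+(K) + w_0(K)^2 - 1$ in balance.

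Finally I would argue that every generic immersion $K$ with $w_0(K)$ odd can be linked to the unit circle through a sequence of the above moves. The Whitney-Graustein theorem together with $(I_\infty)$-moves allows one to match the rotation number of $K$ to that of a simpler model with the same $w_0(K)$; pairs of $(I_0)$-moves allow one to lower $|w_0(K)|$ to $1$, passing through intermediate even $w_0$ (harmless since $\mathcal{J}_1$ and $\mathcal{J}_2$ are defined and invariant in both parity regimes); and a regular homotopy finishes the reduction. The main obstacle is exactly this last reduction step: one must check that each intermediate lift $L^{-1}(K)$ behaves correctly through an $(I_0)$-move, where the number of connected components of $L^{-1}(K)$ switches between one and two. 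This compatibility is the heart of Proposition~\ref{propsduiub44} and is what ties the two cases in the definition of $\mathcal{J}_2$ into a single Stark-Zeeman invariant.
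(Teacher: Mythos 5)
First, note that the paper does not prove this statement at all: it is quoted verbatim from \cite[Proposition 6]{invariant}, so there is no internal proof to compare with, and your argument has to stand on its own. Its skeleton (verify the identity on a model curve, then transport it along a homotopy using invariance of $\mathcal{J}_1,\mathcal{J}_2$ under the Stark--Zeeman disasters together with the factor-two behaviour of the two sides under direct self-tangencies) is a legitimate route, and your key mechanism is correct: when $w_0$ is odd the preimage $L^{-1}(K)$ is connected, a direct self-tangency of $K$ at $p\neq 0$ lifts to two direct self-tangencies of $L^{-1}(K)$ at $\pm\sqrt{p}$ of the same sign, so $\mathcal{J}_1$ jumps by $\pm 2$ while $\mathcal{J}_2$ jumps by $\pm 4$, preserving $\mathcal{J}_2=2\mathcal{J}_1-1$. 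The base case $w_0=\pm1$ is also fine.

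The genuine gap is in the reduction step, and it is not the one you flag. Since $w_0$ is a homotopy invariant in $\C\setminus\{0\}$, connecting a curve with $w_0=2m+1$ to the unit circle forces you to use $(I_0)$-moves, each of which changes $w_0$ by $1$, so your homotopy necessarily spends time at even winding number. On those stretches the $2{:}1$ bookkeeping breaks down: for even $w_0$ the preimage has two components $\widetilde{K}_1,\widetilde{K}_2$ interchanged by $z\mapsto -z$, and a direct self-tangency of $K$ lifts either to one self-tangency on each component (so $\mathcal{J}_2=J^+(\widetilde{K}_1)$ changes by $\pm 2$, not $\pm4$) or to a tangency between the two components (so $\mathcal{J}_2$ does not change at all), while $\mathcal{J}_1$ still jumps by $\pm2$. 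This is exactly why the relation fails for even winding number (the paper records, citing \cite[Proposition 7]{invariant}, that $\mathcal{J}_1$ and $\mathcal{J}_2$ are then completely independent), so declaring the even stages ``harmless'' cannot be right, and Proposition \ref{propsduiub44} does not rescue you: it only concerns the four Stark--Zeeman disasters, under which both invariants are constant, whereas the danger comes from direct self-tangencies occurring while $w_0$ is even, which are not Stark--Zeeman disasters. To close the argument you must either show the connecting homotopy can be chosen so that every direct self-tangency occurs at a moment when the winding number is odd (e.g.\ pair up the $(I_0)$-moves and keep the intermediate even-winding intervals free of direct tangencies), or keep $w_0$ fixed throughout and instead verify the identity directly on a model curve in every odd class, i.e.\ compute $J^+$ of a generic perturbation of the $w_0$-fold covered circle and of its Levi-Civita preimage; your single base case $w_0=1$ does not cover this, and this computation (or an equivalent count, e.g.\ via Viro-type formulas for $J^+$ applied upstairs and downstairs) is where the real content of \cite[Proposition 6]{invariant} lies.
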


 Our next task is to show that $\mathcal{J}_1$ and $\mathcal{J}_2$ are also invariants for Stark homotopies. By definition of a Stark homotopy, it suffices to prove that the two quantities do not change before and after the  appearance of the three distinguished periodic orbits in  Definition \ref{deforbits}. 

Let $K$ be one of the three distinguished orbits. We perturb $K$ slightly so that the perturbed orbit $\widetilde{K}$  is a generic immersion as follows: near a braking point one can take any perturbation. However for a collision, since a Stark system is close to the Kepler problem near the origin,  we perturb $K$ in such a way that the perturbed orbit encircles the origin, see Figure \ref{ledasdsammaddd2dddddsdsdsdsdsdsddssdsdd4ddd}.

 If $K$ is either a brake-brake orbit or a brake-collision orbit, then any two perturbed curves only differ from each other by orientation and the number of exterior loops and crossings through a triple point, see Figure \ref{lemmaddasdadssdsdsdsdsdsdsdsdd2dddddd4}.  By definition,  these two perturbations have the same $J^+$. 
 Thus, we can define the $J^+$-invariant $K$  by the one of $\widetilde{K}$, i.e., 
\begin{equation} \label{eq:JplusSTARK}
J^+(K):=J^+(\widetilde{K}). 
\end{equation}
\begin{figure}[h]
 \centering
 \includegraphics[width=0.45\textwidth, clip]{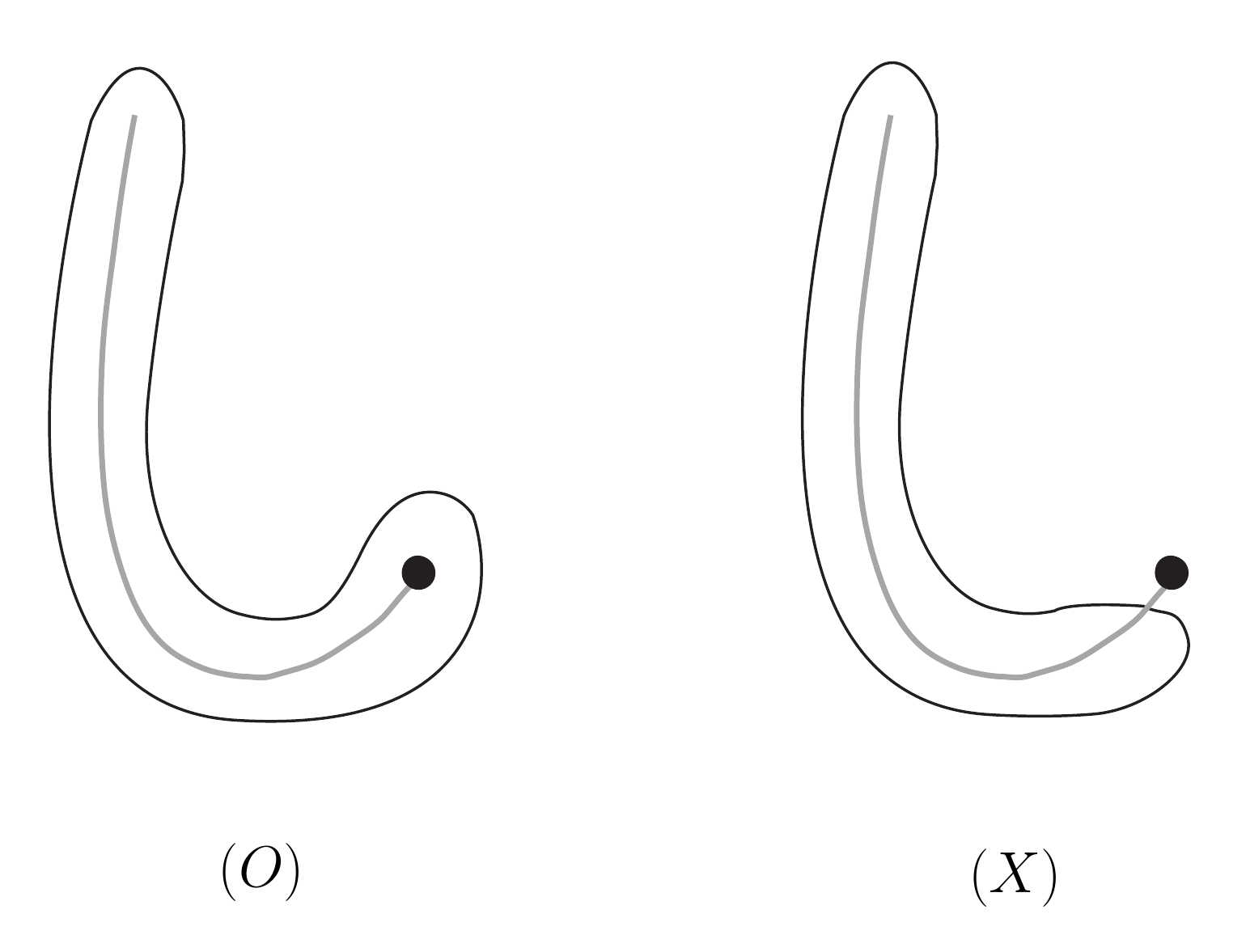}
 \caption{Two perturbations of the brake-collision orbit (gray). Since a Stark system is close to the Kepler problem when the particle moves near the origin, the particle moves as in the left figure. Thus, the $J^+$-invariant of this brake-collision orbit equals $0$.}
 \label{ledasdsammaddd2dddddsdsdsdsdsdsddssdsdd4ddd}
\end{figure}  
\begin{figure}[h]
 \centering
 \includegraphics[width=1.0\textwidth, clip]{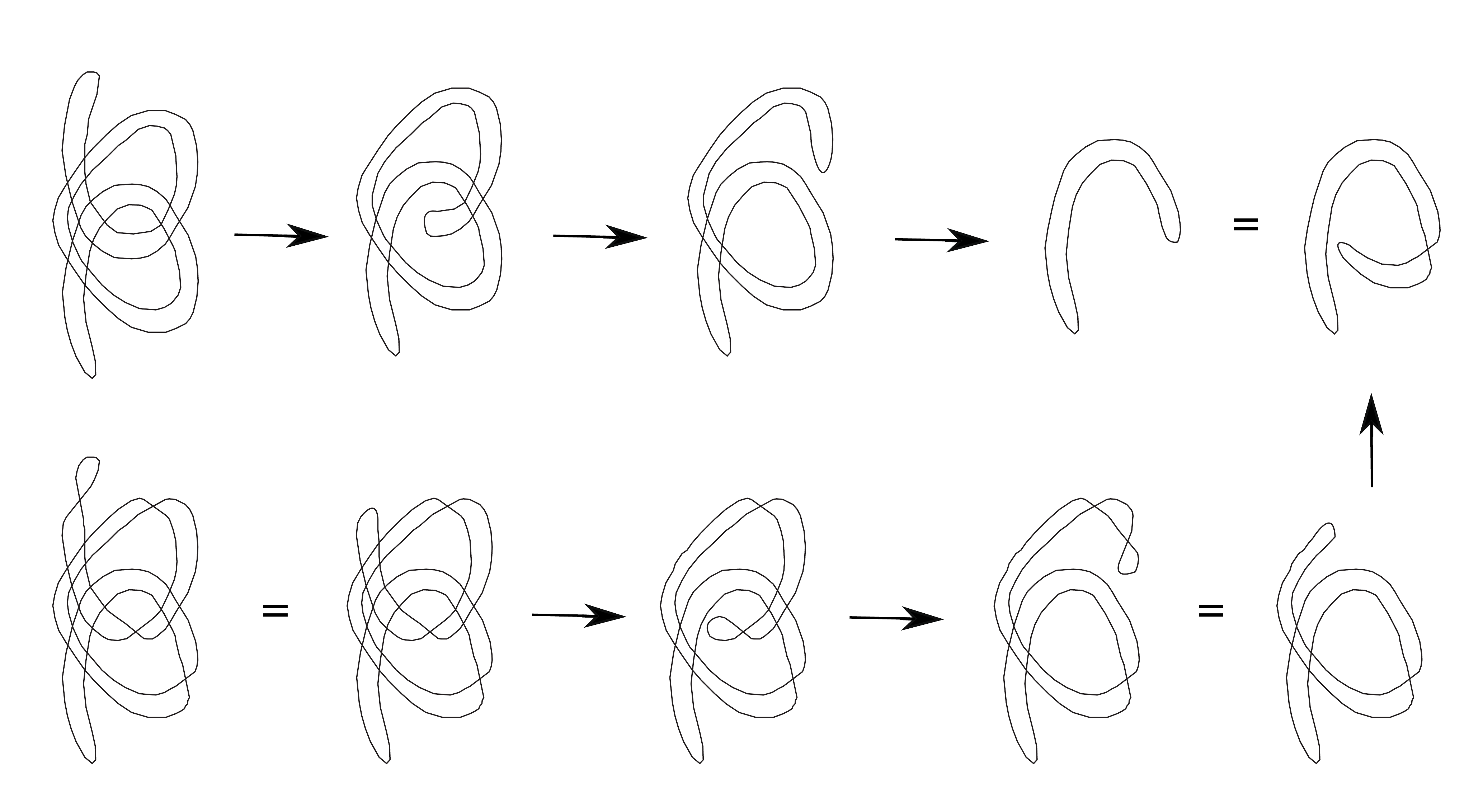}
 \caption{Two perturbations of the brake-brake orbit $K$  in Figure \ref{lemddmaddd2dddddd4ddd}. The above perturbation homotopes $K$ to the standard curve $K_1$ with four negative crossings through  a direct self-tangency and the below one with four negative crossings through a direct self-tangency,  two crossings through a triple point, and four deaths of an exterior loop. Consequently, they have the same invariant and hence the $J^+$-invariant of the brake-brake orbit is given by 8. }
 \label{lemmaddasdadssdsdsdsdsdsdsdsdd2dddddd4}
\end{figure}  
\noindent
This together with the fact that $w_0(K)^2$ does not change by small perturbations as well   imply  that   
the definitions $\mathcal{J}_i (K):=\mathcal{J}_i (\widetilde{K})$, $i=1,2$, are also well-defined.

Let $K$ be a collision-collision orbit. 
  Note   that  we have $w_0(K)^2 =0$ or $4$, depending on perturbations.   
  Likewise, the $J^+$-invariant might depend on perturbations as well
  and hence in this case \eqref{eq:JplusSTARK} is not well-defined.
However, the $\mathcal{J}_1$ invariant  is independent of  perturbations. For example, look at  Figure   \ref{ddorbit} which illustrates two possible perturbations of  $K$. 
       \begin{figure}[h]
\begin{center}
\begin{tikzpicture}[scale=1.0]
\draw [gray] (-3,1) to [out=0, in=120] (-2,0.05);
\draw[gray] (-4,0) to [out=90, in=180] (-3,1);
\draw[gray] (-3,-1) to [out=180, in=270] (-4,0);
\draw[gray] (-3,-1) to [out=0, in=210] (-2,0.05);
\draw[->,thick]  (-1.8,0) to [out=90, in=0] (-3,1.2);
\draw[thick]  (-4.2,0) to [out=90, in=180] (-3,1.2);
\draw[thick]  (-4.2,0) to [out=270, in=180] (-3,-1.2);
\draw[<-,thick]  (-1.8,0) to [out=270, in=0] (-2,-0.2);
\draw[thick]  (-2,-0.2) to [out=180, in=300] (-2.2,0.1);
\draw[thick]  (-2.2,0.1) to [out=120, in=300] (-2.4,0.35);
\draw[thick]  (-2.4,0.35) to [out=120, in=0] (-3,0.8);
\draw[thick]  (-3.8, 0) to [out=90, in=180] (-3,0.8);
\draw[<-,thick]  (-3.8, 0) to [out=-90, in=180] (-3,-0.8);
\draw[thick]  (-3 , -0.8) to [out= 0, in=210] (-2.2,0.1);
\draw[->,thick]  (-3 , -1.2) to [out= 0, in=210] (-1.88,0);
\draw[thick]  (-2.2 , 0.1) to [out= 30, in=180] (-2,0.2);
\draw[<-,thick]  (-2  , 0.2) to [out=  0, in=90] (-1.87 ,0.1);
\draw[thick]  (-1.87  , 0.1) to [out=270  , in=80] (-1.88,0 );
 \draw [fill] (-2.02,0.05) circle [radius=0.05];
\draw [gray] (3,1) to [out=0, in=120] (4,0.05);
\draw[gray] (2,0) to [out=90, in=180] (3,1);
\draw[gray] (3,-1) to [out=180, in=270] (2,0);
\draw[gray] (3,-1) to [out=0, in=210] (4,0.05);
\draw[ ->,thick]  (4.2,0) to [out=90, in=0] ( 3,0.8);
\draw[thick]  (1.8,0) to [out=90, in=180] ( 3,1.2);
\draw[thick]  (1.8,0) to [out=270, in=180] ( 3,-1.2);
\draw[<-,thick]  (4.2,0) to [out=270, in=0] (4,-0.2);
\draw[thick]  (4,-0.2) to [out=180, in=300] (3.8,0.1);
\draw[thick]  (3.8,0.1) to [out=120, in=300] (3.6,0.35);
\draw[<-,thick]  (3.6,0.35) to [out=120, in=0] ( 3,1.2);
\draw[thick]  (2.2, 0) to [out=90, in=180] ( 3,0.8);
\draw[thick]  (2.2, 0) to [out=-90, in=180] ( 3,-0.8);
\draw[thick]  ( 3 , -0.8) to [out= 0, in=210] (3.8,0.1);
\draw[<-,thick]  ( 3 , -1.2) to [out= 0, in=210] (4.12,0);
\draw[  thick]  (3.8 , 0.1) to [out= 30, in=180] (4,0.2);
\draw[thick]  (4  , 0.2) to [out=  0, in=90] (4.13 ,0.1);
\draw[thick]  (4.13  , 0.1) to [out=270  , in=80] (4.12,0 );
 \draw [fill] (3.98,0.05) circle [radius=0.05];
\draw [fill] (4,0.05) circle [radius=0.05];
 \end{tikzpicture}
\end{center}
 \caption{Two perturbations of  a collision-collision orbit. We have $w_0(K)^2=4$ for the one in the left-hand side and $w_0(K)^2 =0$ for the right one.}
 \label{ddorbit}
\end{figure}
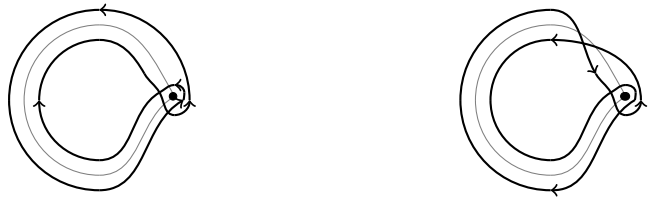
  For the left perturbation,  we have   $w_0(K)^2 =4$. During a Stark-Zeeman homotopy from $K$ to a circle, we encounter only an inverse self-tangency which implies that the perturbation has $J^+ =0$. For the right one, we have $w_0(K)^2 =0$. In this case, we  have a direct self-tangency during a Stark-Zeeman homotopy and hence the perturbation has $J^+=2$. In any case, we have $\mathcal{J}_1=2.$ The assertion for $\mathcal{J}_2$ can be shown in a similar manner.
 Thus, we also define $\mathcal{J}_i$   of $K$ as  $\mathcal{J}_i (K):=\mathcal{J}_i (\widetilde{K})$, $i=1,2$.

 \begin{Proposition} The two quantities $\mathcal{J}_1$ and $\mathcal{J}_2$ are invariants for Stark homotopies. 
\end{Proposition}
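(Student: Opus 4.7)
The plan is to verify invariance of $\mathcal{J}_1$ and $\mathcal{J}_2$ under each of the two kinds of disasters permitted in a Stark homotopy, namely the triple point $(III)$ and the inverse self-tangency $(II^+)$. Recall that by Lemma \ref{lemmainverse}, in a Stark system the event $(II^+)$ corresponds exactly to the appearance of one of the three distinguished periodic orbits of Definition \ref{deforbits}.

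For $(III)$ the work is already done: since a triple point crossing is also an allowed disaster in a Stark--Zeeman homotopy, invariance of $\mathcal{J}_1$ and $\mathcal{J}_2$ under $(III)$ follows immediately from Proposition \ref{propsduiub44}.

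The substantive content is the handling of $(II^+)$. Let $s_0 \in (0,1)$ be a parameter value at which $K^{s_0}$ becomes distinguished, and pick $\epsilon > 0$ so small that $K^{s_0-\epsilon}$ and $K^{s_0+\epsilon}$ are generic immersions lying in an arbitrarily small neighborhood of $K^{s_0}$. These two curves are then admissible perturbations of $K^{s_0}$ in the sense of the discussion preceding the proposition. For the brake--brake and brake--collision cases the well-definedness argument above (based on the analysis illustrated by Figure \ref{lemmaddasdadssdsdsdsdsdsdsdsdd2dddddd4}) shows that both $J^+(\widetilde{K})$ and $w_0(\widetilde{K})^2$ are independent of the choice of perturbation $\widetilde{K}$, and hence $\mathcal{J}_i(K^{s_0-\epsilon}) = \mathcal{J}_i(K^{s_0+\epsilon})$ for $i=1,2$ follows directly.

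The main obstacle is the collision--collision case. Writing $\widetilde{K}$ for a small generic perturbation of $K^{s_0}$, we have $w_0(\widetilde{K})^2 \in \{0,4\}$ depending on whether $\widetilde{K}$ encircles the origin. Guided by the worked example of Figure \ref{ddorbit}, I would verify that the two types of perturbations differ in $J^+$ by exactly $\pm 2$, which cancels the compensating $\mp 2$ difference in $w_0^2 / 2$; hence $\mathcal{J}_1(\widetilde{K}) = J^+(\widetilde{K}) + w_0(\widetilde{K})^2/2$ is perturbation-independent. A parallel computation carried out on a connected component of the Levi--Civita preimage $L^{-1}(\widetilde{K})$ gives the invariance of $\mathcal{J}_2$ by the same mechanism. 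Once this cancellation is established, $\mathcal{J}_1$ and $\mathcal{J}_2$ descend to well-defined quantities on distinguished orbits and therefore remain constant across every $(II^+)$ event of a Stark homotopy, completing the proof.
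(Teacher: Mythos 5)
Your argument is correct and follows essentially the same route as the paper: the paper's proof also reduces the $(II^+)$ events to the perturbation-independence of $\mathcal{J}_i$ on the distinguished orbits established in the preceding discussion, observing that $K^{s_0\pm\epsilon}$ are themselves admissible small generic perturbations of $K^{s_0}$. Your explicit treatment of $(III)$ via the Stark--Zeeman invariance and your spelling out of the $J^+$ versus $w_0^2/2$ cancellation in the collision--collision case merely make explicit what the paper leaves implicit.
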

\begin{proof} Let $(K^s)_{s \in [0,1]}$ be a Stark homotopy. Suppose that $K^{s_0}$ is one of the distinguished orbits for some $s_0 \in (0,1)$. Since $K^{s_0 \pm \epsilon}$ are generic immersions and small perturbations of $K^{s_0}$, provided that $\epsilon >0$ is small enough,  we have $\mathcal{J}_i(K^{s_0- \epsilon}) =\mathcal{J}_i(K^{s_0+ \epsilon})$, $i=1,2$. 
This completes the proof of the proposition.
\end{proof}

We conclude this section with the following proposition which provides formulas for the $\mathcal{J}_1$ invariants of brake-brake orbits and brake-collision orbits and $\mathcal{J}_2$ invariants for brake-collision orbits which will be used to prove the main result   in Section \ref{secmain}. The $\mathcal{J}_2$ invariant of brake-brake orbits in the Euler problem will be given later.

 \begin{Proposition} \label{sdfkluliugliu3gsgd}  
  Let $\gamma$ be a brake-brake orbit or brake-collision orbit.   Assume that all intersection points of $\gamma$ are quadruple and the number of quadruple points equals $N$.  Then we have
$$ 
\mathcal{J}_1(\gamma)  = \begin{cases}  2N    & \text{ if $\gamma$ is a brake-brake orbit  }        \\   2N + 1/2       & \text{ if  $\gamma$ is a brake-collision orbit}   \end{cases}
$$
and
$$ 
\mathcal{J}_2(\gamma) =    4N       \quad \text{ if  $\gamma$ is a brake-collision orbit.}  
$$
\end{Proposition}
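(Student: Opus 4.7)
The plan is to compute the invariants directly from the definition $\mathcal{J}_i(\gamma):=\mathcal{J}_i(\widetilde{\gamma})$ by analyzing a convenient perturbation $\widetilde{\gamma}$ of $\gamma$. I will combine the formula $\mathcal{J}_1(K) = J^+(K) + w_0(K)^2/2$ with, for the $\mathcal{J}_2$ computation in the brake-collision case, the parity relation $\mathcal{J}_2(K) = 2\mathcal{J}_1(K) - 1$ from Proposition~\ref{theoremsdih4}. By Lemma~\ref{lemmainverse}, the orbit $\gamma$ is traversed back and forth along an immersed arc $c\colon[-1,1]\to\C$ whose endpoints $c(\pm 1)$ are the two distinguished bounce points; up to reparametrization $\gamma(t) = c(\cos t)$, and the quadruple-point hypothesis translates into $c$ having exactly $N$ transverse self-intersections. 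Choosing a smooth non-vanishing normal field $n$ along $c$, I take the perturbation to be
$$
\widetilde{\gamma}(t) = c(\cos t) + \epsilon\sin t\cdot n(\cos t)
$$
for small $\epsilon>0$; in the brake-collision case I fix $n$ near the collision endpoint so that the perturbed curve encircles the origin exactly once, in agreement with Figure~\ref{ledasdsammaddd2dddddsdsdsdsdsdsddssdsdd4ddd}.

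A local-to-global intersection count comes next. Away from the self-crossings of $c$, the perturbed curve consists of two $\epsilon$-offset strands joined smoothly at the bounce points and having no mutual intersections. Near each transverse self-crossing of $c$, modeled locally by the coordinate axes, the perturbation produces the four short strands $\{y=\pm\epsilon\}$ and $\{x=\pm\epsilon\}$, which meet in four transverse double points. Hence $\widetilde{\gamma}$ has precisely $4N$ double points. The winding number follows from the same picture: since $\gamma=c\cdot c^{-1}$ is null-homotopic in $\C\setminus\{0\}$, we have $w_0(\widetilde{\gamma})=0$ in the brake-brake case, and the choice of $n$ gives $w_0(\widetilde{\gamma})=\pm 1$ in the brake-collision case.

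The heart of the proof is the identity $J^+(\widetilde{\gamma}) = 2N$, which I would establish by induction on $N$. The base case $N=0$ is immediate: $\widetilde{\gamma}$ is a simple closed curve (encircling the origin once in the brake-collision case), regularly homotopic to $K_1$ and hence having $J^+=0$. For the inductive step, one realizes a new transverse self-crossing in $c$ by a localized modification attaching a small loop in a disk disjoint from all prior crossings; after perturbation this becomes a connected sum of the previous $\widetilde{\gamma}_0$ with a standard four-double-point ``doubled loop'' diagram. A direct computation (homotoping this standard diagram to $K_1$ and counting exactly one direct self-tangency crossing) shows the standard piece has $J^+=2$, and the additivity of $J^+$ under connected sums yields $J^+(\widetilde{\gamma}) = J^+(\widetilde{\gamma}_0)+2 = 2N$. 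Combining now gives
$$
\mathcal{J}_1(\gamma) = 2N + \tfrac{1}{2}w_0(\widetilde{\gamma})^2 = \begin{cases} 2N & \text{brake-brake},\\ 2N+\tfrac12 & \text{brake-collision},\end{cases}
$$
and for the brake-collision case Proposition~\ref{theoremsdih4} then yields $\mathcal{J}_2(\gamma) = 2\mathcal{J}_1(\gamma)-1 = 4N$.

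The main obstacle is justifying the inductive step rigorously: one has to verify that introducing a new transverse self-crossing in $c$ really corresponds, at the level of the perturbation, to a connected-sum decomposition of $\widetilde{\gamma}$ (together with moves of types $(II^+)$ and $(III)$, which do not affect $J^+$), and one has to nail down the direct computation $J^+(\text{doubled loop})=2$. An appealing alternative is to replace the induction by a direct homotopy of $\widetilde{\gamma}$ to a standard ``rose'' model with $N$ loops via $(II^+)$ and $(III)$ moves only, and then reduce the rose model to $K_1$ while carefully tracking the $N$ direct self-tangency crossings that must occur; this furnishes $J^+(\widetilde{\gamma})=2N$ without any connect-sum machinery.
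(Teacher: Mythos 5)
Your setup agrees with the paper where it overlaps: the back-and-forth parametrization coming from Lemma \ref{lemmainverse}, the ribbon-type perturbation producing exactly $4N$ transverse double points, the winding numbers $0$ (brake-brake) and $\pm 1$ (brake-collision), and your shortcut for $\mathcal{J}_2$ via Proposition \ref{theoremsdih4} applied to the odd-winding perturbation (the paper instead counts the $2N$ quadruple points of the Levi-Civita preimage; your route is legitimate and slightly cleaner, and the paper itself remarks on the consistency). The genuine gap is the key identity $J^+(\widetilde{\gamma})=2N$. Your induction only treats curves whose underlying arc is built by attaching a small loop inside a disk disjoint from all earlier crossings, i.e.\ connected sums of standard curls; but the proposition concerns an arbitrary brake-brake or brake-collision orbit, and the crossing pattern of the arc $c$ produced by the dynamics need not decompose this way: an innermost loop of $c$ may be crossed by, or contain, other strands, so there is no ``curl'' to split off, and your inductive step gives no way to reduce a general configuration of $N$ crossings to one with $N-1$. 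Your proposed alternative, homotoping $\widetilde{\gamma}$ to a rose model using only $(II^+)$ and $(III)$ moves, is exactly the kind of claim that carries the entire content of the computation --- controlling which self-tangencies are direct is what $J^+$ measures --- so it cannot simply be asserted. The paper's argument is configuration-independent precisely because it works the other way around: it homotopes the doubled curve by simplifying the underlying arc to an embedded one and checks \emph{locally} that each quadruple cluster of four double points dies through exactly one direct and one inverse self-tangency (possibly together with triple-point crossings), so the net change of $J^+$ along the homotopy to the circle is $2N$ regardless of how the $N$ crossings are arranged. That local bifurcation analysis is what is missing from your write-up, and you yourself flag it as the unresolved obstacle.

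A smaller point: with the explicit formula $\widetilde{\gamma}(t)=c(\cos t)+\epsilon\sin t\, n(\cos t)$ one has $\widetilde{\gamma}(0)=c(1)$ and $\widetilde{\gamma}(\pi)=c(-1)$, so in the brake-collision case the perturbed curve passes \emph{through} the collision point (the origin) no matter which non-vanishing normal field $n$ you choose; it is then not a generic immersion in $\C\setminus\{(0,0)\}$ and does not have the winding number $\pm 1$ you use. You must modify the curve near the collision as in Figure \ref{ledasdsammaddd2dddddsdsdsdsdsdsddssdsdd4ddd}, where the perturbation follows the nearby near-collision Kepler-like arcs that swing around the origin --- this is how the paper prescribes it. This is easily repaired, but as written the formula contradicts the stated choice.
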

\begin{proof}  
Let  $\gamma$ be as in the assertion. By the previous argument,  one can generically homotope a brake-brake orbit to a circle which does not encircle the origin  and   a brake-collision orbit can be generically homotoped to a circle which encircles the origin. Therefore, the winding number of a brake-brake orbit around origin is given by zero and  that of a brake-collision orbit equals  either plus  or minus one. Note that  after a small perturbation, each quadruple point gives rise to four double points of a generic immersion. During a homotopy,  it leads to one crossing through a direct self-tangency and one crossing through an  inverse self-tangency (possibly with a finite number of crossings through a triple point).    Since the perturbation of  $\gamma$ is generically homotoped to a circle, we conclude that  $J^+(\gamma)=2N$. This proves the assertion for the $\mathcal{J}_1$ invariants.

  If $\gamma$ is a brake-collision orbit, then the preimage consists of a single  orbit consisting of inverse self-tangencies. By definition of the map $L$, the number of quadruple points along $L^{-1}(\gamma)$ is given by $2N$. This proves the formulas for the $\mathcal{J}_2$ invariants and completes the proof of the proposition.
\end{proof}

\begin{Remark} \rm We remark that the  formula of the $\mathcal{J}_2$ invariant for odd winding number given in the previous Proposition matches up with the relation  given in Proposition \ref{theoremsdih4}.
\end{Remark}

\section{Special cases of the planar circular restricted three body problem}\label{secorbits}

The describing Hamiltonian  of the PCR3BP $H_{3BP}  : T^*( \R^2 \setminus \left \{E, M\right \} ) \rightarrow \R$ is given by
\begin{align*}
H_{3BP} (q,p)&= \frac{1}{2}|p|^2 - \frac{1-\mu}{|q-E|} - \frac{\mu}{|q-M|} +  q_1 p_2 - q_2 p_1  \\
&=  \frac{1}{2}( (p_1 - q_2)^2 + (p_2 + q_1)^2)- \frac{1-\mu}{|q-E|} - \frac{\mu}{|q-M|}- \frac{1}{2} |q|^2 ,
\end{align*}
where $E=(-\mu,0)$,  $M=(1-\mu,0)$ and $\mu \in (0,1)$.  The point $E$ is referred to as the \textit{Earth} and the point $M$ is referred to as the \textit{Moon}. The moving particle is called the \textit{satellite}. The parameter $\mu$ is the mass ratio of the two primaries. Note that the PCR3BP is a planar Stark-Zeeman system with $A=(A_1, A_2)=(q_2,- q_1)$. In this section, we study  two friends of the PCR3BP. The first friend, which is called the \textit{rotating Kepler problem}, is obtained by switching off the Moon, i.e., we take $\mu=0$ in the PCR3BP. The second friend, which is called the \textit{Euler problem of two fixed centers}, is obtained by switching off the rotating terms $q_1p_2 - q_2p_1$.

\subsection{The     rotating Kepler problem}\label{secrkpds} In this subsection, we recall some results on the rotating Kepler problem. For more details, we refer to \cite{RKP, Kim}.

The Hamiltonian of the rotating Kepler problem is given by
\begin{equation*}
H_{\text{RKP}}(q,p) = \frac{1}{2}|p|^2 - \frac{1}{|q|} + q_1p_2 - q_2p_1.
\end{equation*}
The rotating Kepler problem is also a planar Stark-Zeeman system with $A= (q_2,- q_1)$. The Hamiltonian  $H_{\text{RKP}}$ has a unique critical value $-3/2$ which  satisfies the same properties as $c_1$ in Section \ref{secperSZ}. Indeed, for $c<-3/2$ the  Hill's region consists of two connected components: a bounded component whose closure is homeomorphic to a closed unit disk and a unbounded component.

One can easily see that the angular momentum $L=q_1p_2 - q_2 p_1$ and the (inertial) Kepler energy $E=\tfrac{1}{2}|p|^2 - \tfrac{1}{|q|}$ are  integrals of the system. In the following we assume that the Kepler energy is negative: $E<0$ and hence any (inertial) Kepler orbit  is either an elliptic orbit(including a collision orbit) or a circular orbit.   Since $L$ and $E$ Poisson commute, i.e., $\left \{ L,E \right \} =0$, where $\left \{ \cdot, \cdot \right\}$ is the Poisson bracket of smooth functions, the flows of the Hamiltonian vector fields $X_L$ and $X_E$ commute. Hence,  the Hamiltonian flow of  $H_{\text{RKP}} =L+E$ coincides with  the composition of the two Hamiltonian flows
\begin{equation} \label{eqflowcommute}
\phi_{H_{\text{RKP}}}^t = \phi_{L+E}^t = \phi_L^t \circ \phi_E^t.
\end{equation}
It follows that any orbit $\gamma^{\text{RKP}}$ in the rotating Kepler problem has the form $\gamma^{\text{RKP}}(t) = \exp(it) \gamma(t)$, where $\gamma$ is a Kepler orbit. Note that even though $\gamma$ is always periodic, $\gamma^{\text{RKP}}$ is not  necessarily periodic. 

We   assume that $\gamma$ is an ellipse of period $T>0$. For $\gamma^{\text{RKP}}$ to be periodic, a suitable resonance condition has to be satisfied:  $\gamma^{\text{RKP}}$ is periodic if and only if the periods of $\exp(it)$ and $\gamma(t)$ are commensurable, i.e., there exist positive integers $k,l\in\N$ which are relatively prime and satisfy  $ kT  = 2 \pi l$.  If this resonance condition is satisfied, the $2\pi l$-peridic orbit $\gamma^{\text{RKP}}$ is a $k$-fold covered Kepler ellipse in an $l$-fold covered coordinate system.  This observation gives rise to the following definition. 
\begin{Definition} \rm (\cite[Section 4]{RKP})   A $\lambda$-periodic orbit $\gamma^{\text{RKP}}(t) = \exp( it ) \gamma(t)$   is called a \textit{$T_{k,l}$-type orbit} if $\lambda = 2\pi l$ and $\gamma$ is a  Kepler ellipse of period $T$ satisfying $kT = 2 \pi l$  for some relatively prime $k,l \in \N$. A Liouville torus on which $T_{k,l}$-type orbits lie is called a \textit{$T_{k,l}$-torus}. 
Finally,  a smooth family of $T_{k,l}$-tori  is called the \textit{$T_{k,l}$-torus family}. 
\end{Definition}

\begin{Remark} \rm Whenever we consider a $T_{k,l}$-torus, we assume that $k$ and $l$ are relatively prime which means that the $T_{k,l}$-type orbits are simple covered.
\end{Remark}

\noindent Again by the fact that $L$ and $E$ Poisson commute, along the $T_{k,l}$-torus family the Kepler energy $E$ is constant. Indeed, the Kepler energy $E_{k,l}$ of the $T_{k,l}$-torus family is given by  
$$ E_{k,l}=-\frac{1}{2}\bigg( \frac{k}{l} \bigg)^{\frac{2}{3}},$$
see for example \cite[Section 6]{RKP} or \cite[Lemma 3.3]{Kim}. Throughout this paper, we make the following assumption

\; 

\textbf{Assumption.}  \textit{$E_{k,l}<-1/2$, or equivalently  $k>l$.  }

\;

\noindent It turns out  that the $T_{k,l}$-torus family bifurcates from the $(k-l)$-fold covered circular orbit of angular momentum $L=-(l/k)^{1/3}$ and dies at $(k+l)$-fold covered circular orbit of angular momentum $L=(l/k)^{1/3}$, see for example \cite[Section 6 and Appendix B]{RKP} or \cite[Proposition 3.4]{Kim}. We call the circular orbits of positive momentum and negative angular momentum   the \textit{retrograde circular orbit} and the \textit{direct circular orbit}, respectively.

\subsection{The   Euler problem}
The describing Hamiltonian of the Euler problem  is given by
\begin{equation}\label{hameuler}
H_{\text{Euler}}(q,p) = \frac{1}{2} |p|^2  -\frac{1-\mu}{|q-E|}- \frac{\mu}{|q-M|},
\end{equation}
where $E=(0,0)$ and $M=(1,0)$. It  has a unique critical value $c=-1-2\sqrt{\mu(1-\mu)}$ which also satisfies the same properties as $c_1$ in Section \ref{secperSZ}.  In the following we continue to denote the critical value by $c_1$.  Note that for $c<c_1$ the Hill's region consists of two bounded components: one is around the Earth, which is abbreviated by $\mathcal{K}_c^{\text{E}}$, and the other is around the Moon, which is abbreviated by $\mathcal{K}_c^{\text{M}}$. The Euler problem  is also  integrable and an integral is given by
\begin{equation}\label{integraleuler}
B(q,p) = -(q_1 p_2 - q_2 p_1)^2  + (q_1 p_2 - q_2 p_1)p_2 - \frac{ (1-\mu)q_1}{|q-E|} - \frac{ \mu ( 1-q_1)}{|q-M|}.
\end{equation}

We apply the translation $(q_1, q_2, p_1,p_2) \mapsto (q_1 - {1}/{2}, q_2 , p_1 , p_2)$ under which the dynamics does not change. Note that we now have $E=(-{1}/{2},0)$ and $M=({1}/{2},0)$. We introduce the double covered elliptic coordinates $(\lambda, \nu) \in \R \times S^1[-\pi, \pi]$ which are defined by the relations
\begin{eqnarray*}
\cosh \lambda=  |q- \text{E} | + |q- \text{M}|  \;\;\;\text{ and  }\;\;\; \cos \nu=   |q- \text{E}| - |q- \text{M}|   .
\end{eqnarray*}
The momenta ${p_{\lambda} }$ and ${ p_{\nu}}$ are determined by the canonical relation $ p_1 d q_1 + p_2 dq_2 = p_{\lambda} d \lambda + p_{\nu} d\nu$. Note that 
\begin{equation}\label{eqtrnasofrmation}
(\lambda, \nu) \mapsto (q_1, q_2) = \bigg(\frac{1}{2} \cosh \lambda \cos \nu,   \frac{1}{2} \sinh \lambda \sin \nu\bigg)
\end{equation}
is  a 2-to-1 branched covering with branch points at $E, M$. The two sheets are related by $(\lambda , \nu) \mapsto (- \lambda, - \nu)$. This involution extends to the phase space by 
\begin{equation}\label{phaseinvolusion}
(\lambda, \nu,p_{\lambda}, p_{\nu}) \mapsto (-\lambda, -\nu, -p_{\lambda}, -p_{\nu}).
\end{equation}

The Hamiltonian in the elliptic coordinates is given by
\begin{equation*}
H_{\text{Euler}} = \frac{H_{\lambda} + H_{\nu}}{\cosh^2 \lambda - \cos^2 \nu},
\end{equation*}
where $H_{\lambda} = 2p_{\lambda}^2 - 2 \cosh \lambda$ and $H_{\nu} = 2p_{\nu}^2 + 2(1-2\mu)\cos \nu$. Following the convention by Strand-Reinhardt \cite{Strand}, we choose the first integral by $G = -H_{\text{Euler}} +2B$
\begin{equation*}
G  = - \frac{ H_{\lambda} \cos^2 \nu + H_{\nu} \cosh^2 \lambda}{\cosh^2 \lambda - \cos^2 \nu }.
\end{equation*}
In this paper we consider negative energy values $H_{\text{Euler}}=c<0$ so that every motion is bounded. The classically allowed region in the lower half $(G,H_{\text{Euler}} )=(g,c)$-plane is given in Figure \ref{fig:region}. Points in the four regions, labeled by $P$, $L$, $S$, and $S'$, are regular values of the map $(G,H_{\text{Euler}}) : T^*\R^2 \rightarrow \R^2$  and points on the five black curves, which are given by
\begin{eqnarray*} 
&&\ell_{1,2} : c=-g\pm2(1-2\mu),\;\;\; \;\;\; \;\;\; \;\;\; \;\;\; \ell_3 : c=-g-2,\\
&& \ell_4 : gc=(1-2\mu)^2,~c_1<c<-(1-2\mu),\;\;\; \; \ell_5 : gc=1,~-1<c,
\end{eqnarray*} 
are its critical values. 
\begin{figure}[h]
 \centering
 \includegraphics[width=0.6\textwidth, clip]{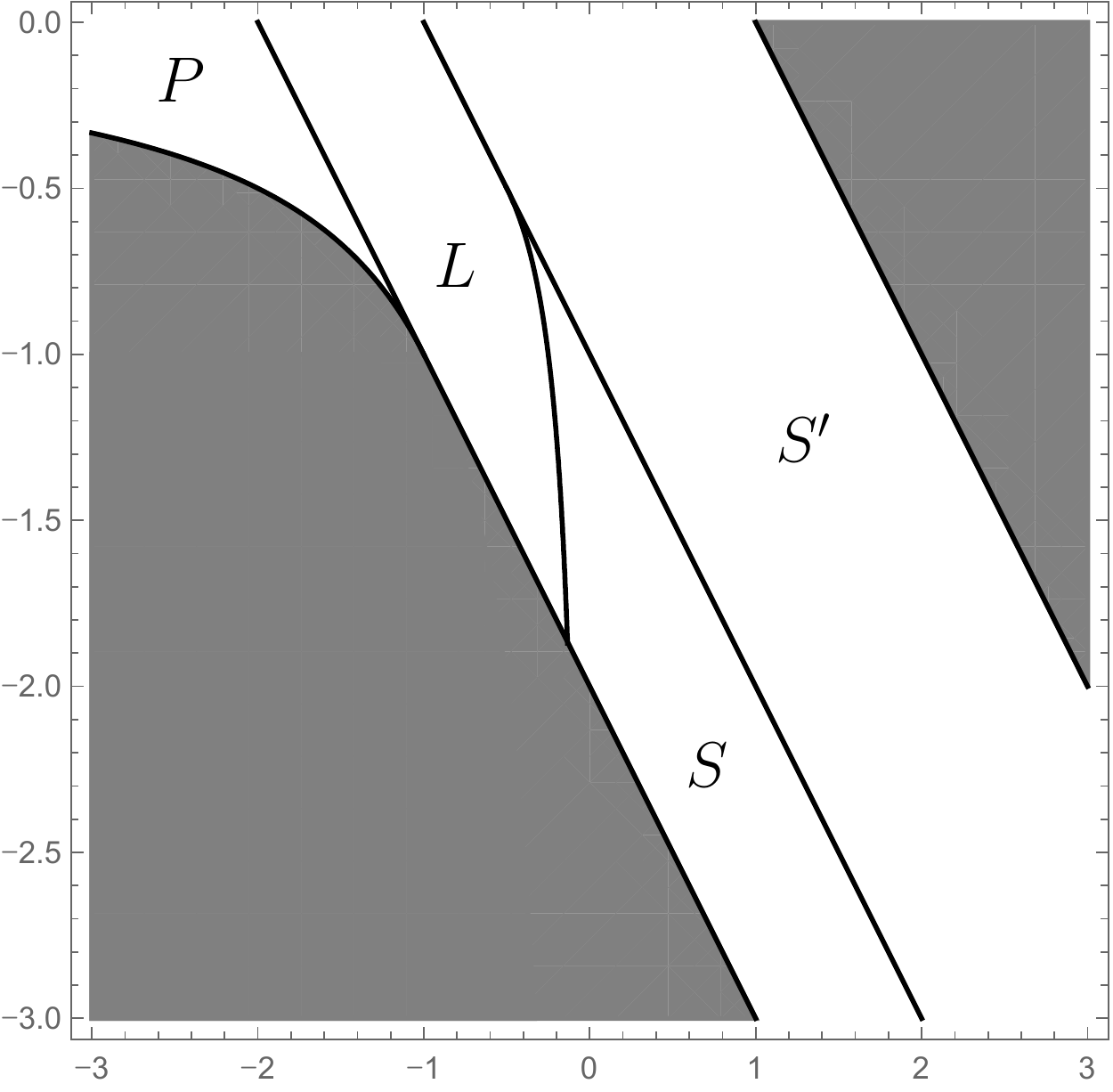}
 \caption{ For negative energies,   each point  in the four regions $S'$, $S$, $L$ and $P$ represents regular motions along which $dH_{\text{Euler}}$ and $dG$ are linearly independent. Along the black curves, the two differentials are linearly dependent.  The shaded regions are classically forbidden. For more details on the motions, see \cite{Kim2, Bifurcation}.}
 \label{fig:region}
\end{figure} 
Note that $c_1$, $c=-(1-2\mu)$ and $c=-1$ are the enegy values at which $\ell_3$ and $\ell_4$, $\ell_3$ and $\ell_5$, and $\ell_2$ and $\ell_4$, respectively.  This shows that for $c<c_1$ only the two regions $S$ and $S'$ appear. For more details, we refer to  \cite{homoclinic, Strand, Bifurcation}.

Given $(G, H_{\text{Euler}}) = (g,c)$, the momenta $p_{\lambda}$ and $p_{\nu}$ are expressed by
\begin{equation}\label{eqmomentadouble}
p_{\lambda}^2 = \frac{ c \cosh^2 \lambda + 2 \cosh \lambda +g }{2} \quad \text{ and } \quad p_{\nu}^2 = \frac{ -c \cos^2 \nu - 2(1-2\mu)\cos\nu - g}{2}.
\end{equation}
Fix $c<c_1$ and consider points  in the  region $S$ or $S'$. The phase portrait for $\lambda$   is then a simple closed curve which is symmetric with respect to both $\lambda$- and $p_{\lambda}$-axes and centered at $(\lambda, p_{\lambda}) = (0,0)$. If $(g,c)\in S$, the $\nu$-phase portrait  consists of  a disjoint union of two  simple closed curves: one is associated to the Earth component and the other is associated to the Moon component. The curve corresponding to the Earth component (or the Moon component) is symmetric with respect to the $\nu$-axis and the line $p_{\nu}=-\pi$ (or the line $p_{\nu}=0$) and centered at $(\nu, p_{\nu}) = (0,-\pi)$ (or $(\nu, p_{\nu}) = (0,0)$). If $(g,c)\in S'$, only one simple closed curve corresponding to the Earth component appears, see Figure \ref{phaseportrati}. 
\begin{figure}[h]
\begin{subfigure}{0.45\textwidth}
  \centering
  \includegraphics[width=0.9\linewidth]{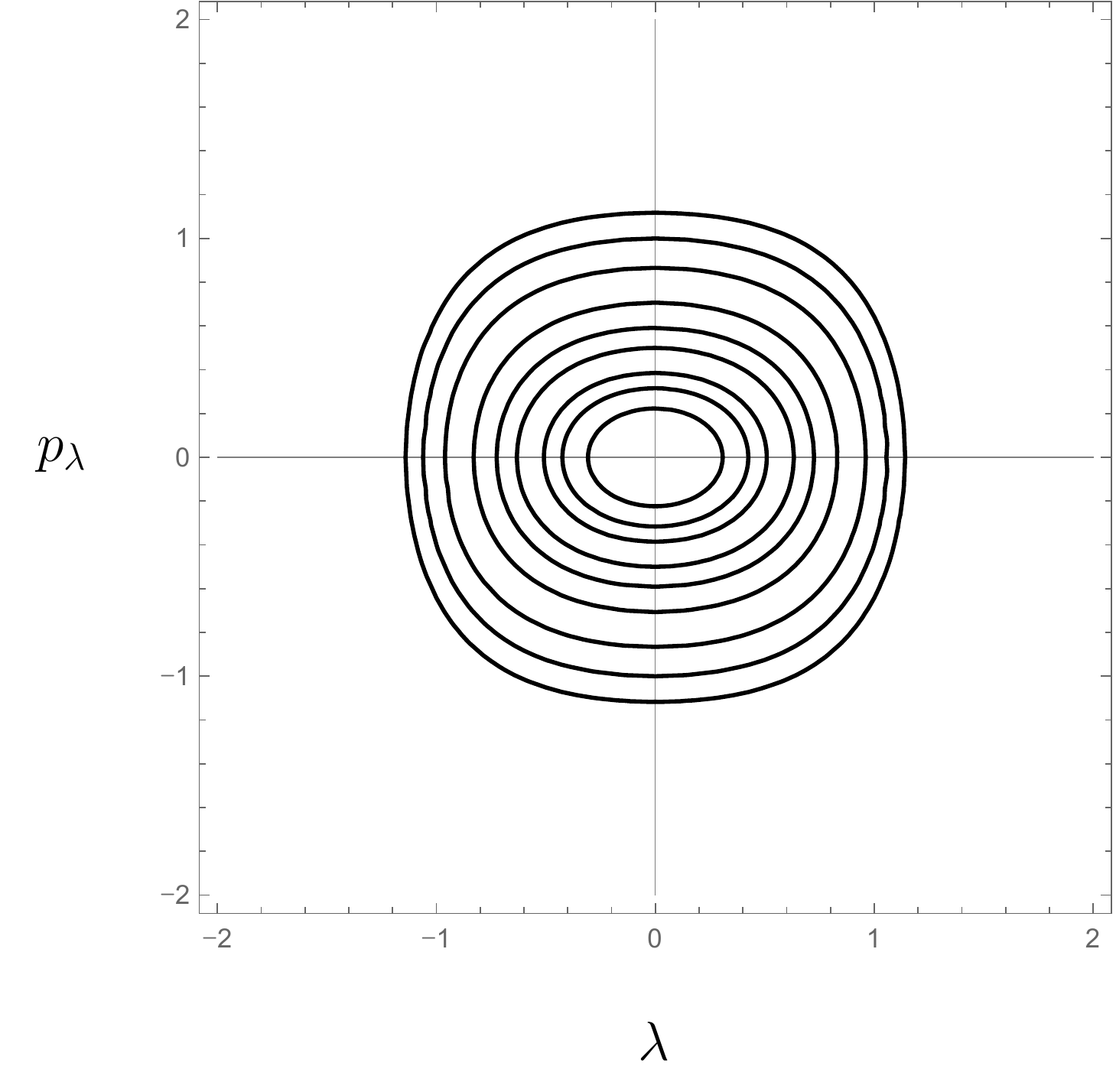}
\caption{Phase portraits for $\lambda$}
  \label{pahsel}
\end{subfigure}
\begin{subfigure}{0.45\textwidth}
  \centering
  \includegraphics[width=0.9\linewidth]{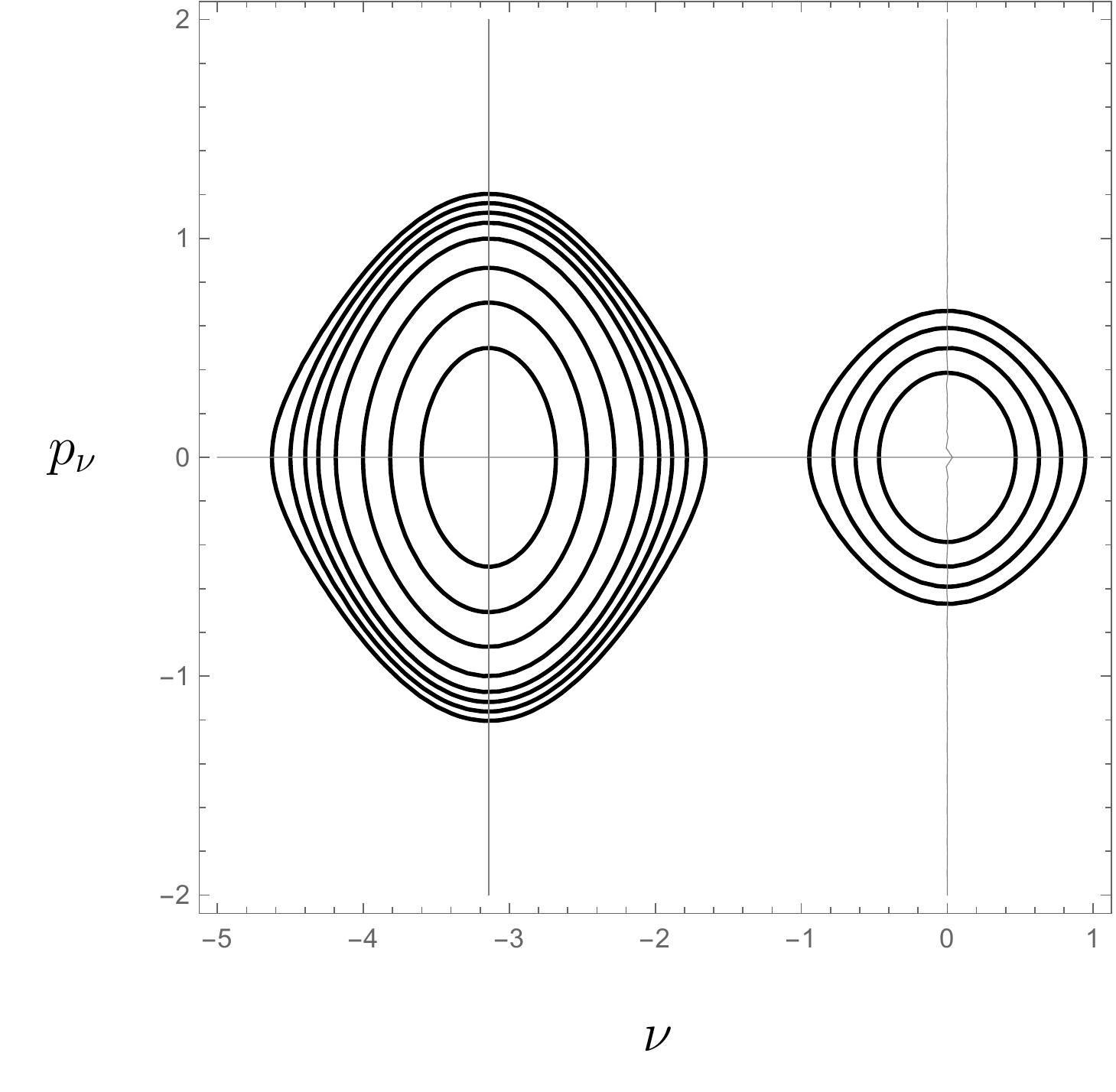}
\caption{Phase portraits for $\nu$}
  \label{pasheo}
\end{subfigure}
\caption{Phase portraits for $\mu = \tfrac{1}{4}$}
\label{phaseportrati}
\end{figure}
This implies that the preimage of a  regular level $(g,c) \in S$ or $(g,c) \in S'$ represents    a disjoint union of two tori or a single torus, respectively.  Therefore, in the $S$-region the satellite moves near either the Earth or the Moon while in the $S'$-region it is confined to a neighborhood of the Earth.  In view of \eqref{eqmomentadouble}, the  $\lambda$-phase portraits are oriented in counterclockwise for $\left \{ \lambda >0 \right \}$ and in clockwise for $\left \{ \lambda <0 \right \}$. The  $\nu$-phase portraits are oriented  in a similar way. This picture holds  because the two sheets of the double covering of the phase space are related with each other by \eqref{phaseinvolusion}.

Fix an energy level $H_{\text{Euler}} = c<0$. Due to collisions, the energy hypersurface $H^{-1}_{\text{Euler}}(c)$ is noncompact. However, one can regularize the dynamics on the energy hypersurface: define the new Hamiltonian
$$
K := (H_{\text{Euler}} - c) (\cosh^2 \lambda - \cos^2 \nu)= K_{\lambda} + K_{\nu},
$$
where
$K_{\lambda}= 2p_{\lambda}^2 - 2 \cosh \lambda - c \cosh^2 \lambda$ and $K_{\nu}= 2p_{\nu}^2 + 2 (1-2\mu)\cos \nu + c \cos^2 \nu$. Then  orbits of $H_{\text{Euler}}$ with energy $c$ and the time parameter $t$ correspond to orbits of $K$ with energy $0$ and the time parameter $\tau$
$$
d\tau = \frac{dt}{\cosh^2 \lambda - \cos^2}.
$$
In particular, the satellite is now allowed to pass through the primaries.

Since $K_{\lambda}$ and $K_{\nu}$ Possion commute, as in the rotating Kepler problem we have
$$
\phi_{H_{\text{Euler}}}^t = \phi_{K_{\lambda}}^t \circ \phi_{K_{\nu}}^t.
$$
It follows that for an orbit to be periodic, we need a resonance condition for the $\lambda$-period $T_{\lambda}$ and the $\nu$-periods $T_{\nu}$. More precisely, an orbit is periodic if and only if the ratio $R=T_{\nu}/T_{\lambda}$, which is called the \textit{rotation number}, is rational.  It turns out that the rotation number only depends on the value $(G,H_{\text{Euler}})=(g,c)$, i.e., every periodic on a given Liouville torus has the same rotation number. Varying $(g,c)$ we obtain the \textit{rotation function} $R$. The rotation functions of Liouville tori are computed by means of complete elliptic integrals of the first kind, see \cite{Dullin, Kim2}.   We do not study  the rotation function $R$ in detail but  give the following definitions: fix $k$ and $l$ which are relatively prime.  A Liouville torus with $R = {k}/{l}$  is called a \textit{$T_{k,l}$-torus}. Periodic orbits which lie on a $T_{k,l}$-torus is called \textit{$T_{k,l}$-type orbits}.  Fixing $R={k}/{l}$ and varying $(G,H_{\text{Euler}})=(g,c)$ gives rise to a smooth family of $T_{k,l}$-tori, which will be referred to as the \textit{$T_{k,l}$-torus family}, cf. Section \ref{secrkpds}.  We illustrate some periodic orbits in Figure  \ref{d}.

\begin{figure}[h]
\begin{subfigure}{0.32\textwidth}
  \centering
  \includegraphics[width=1.0\linewidth]{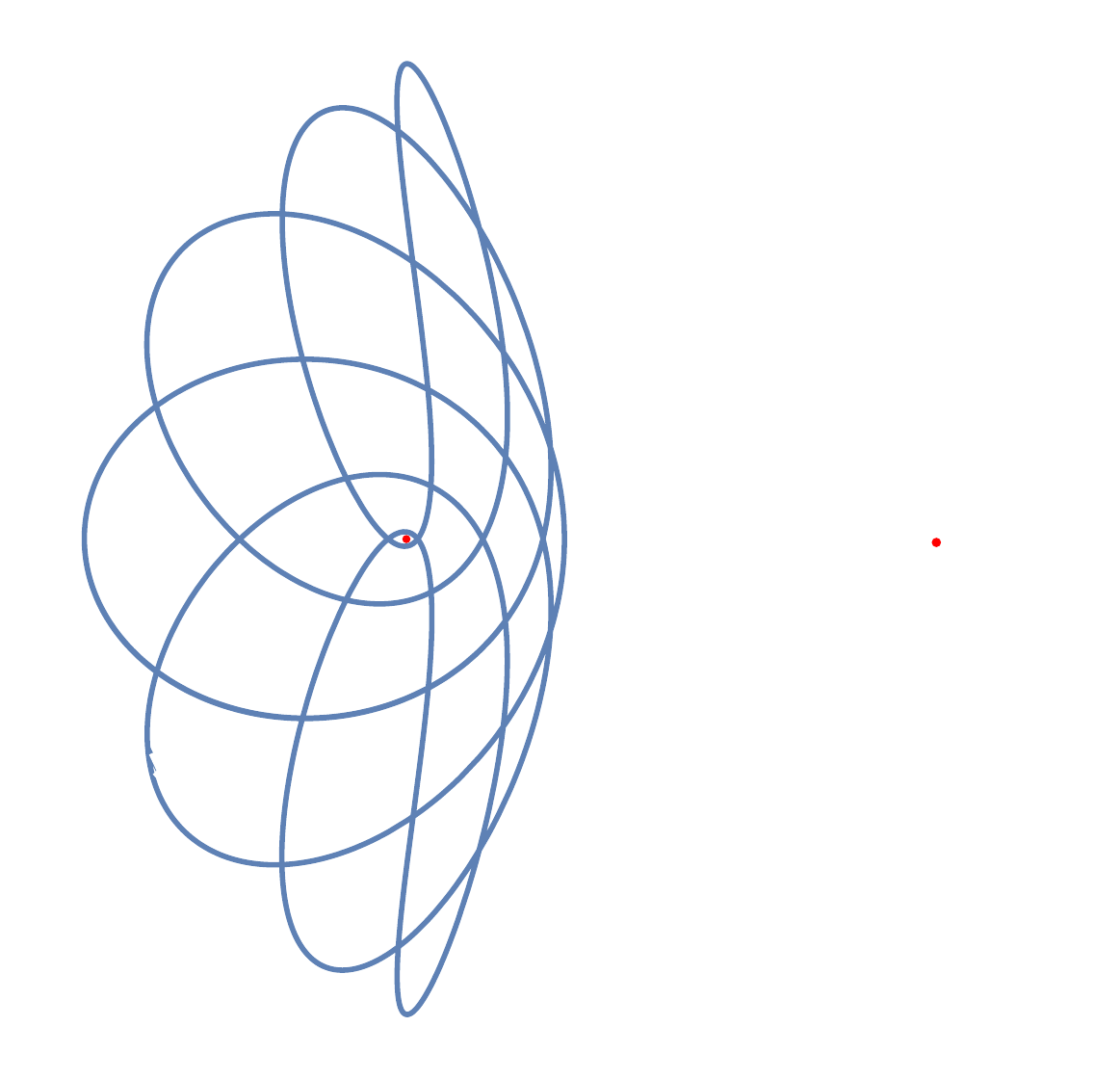}
\caption{$R=\tfrac{7}{5}$}
\end{subfigure}
\begin{subfigure}{0.32\textwidth}
  \centering
  \includegraphics[width=1.0\linewidth]{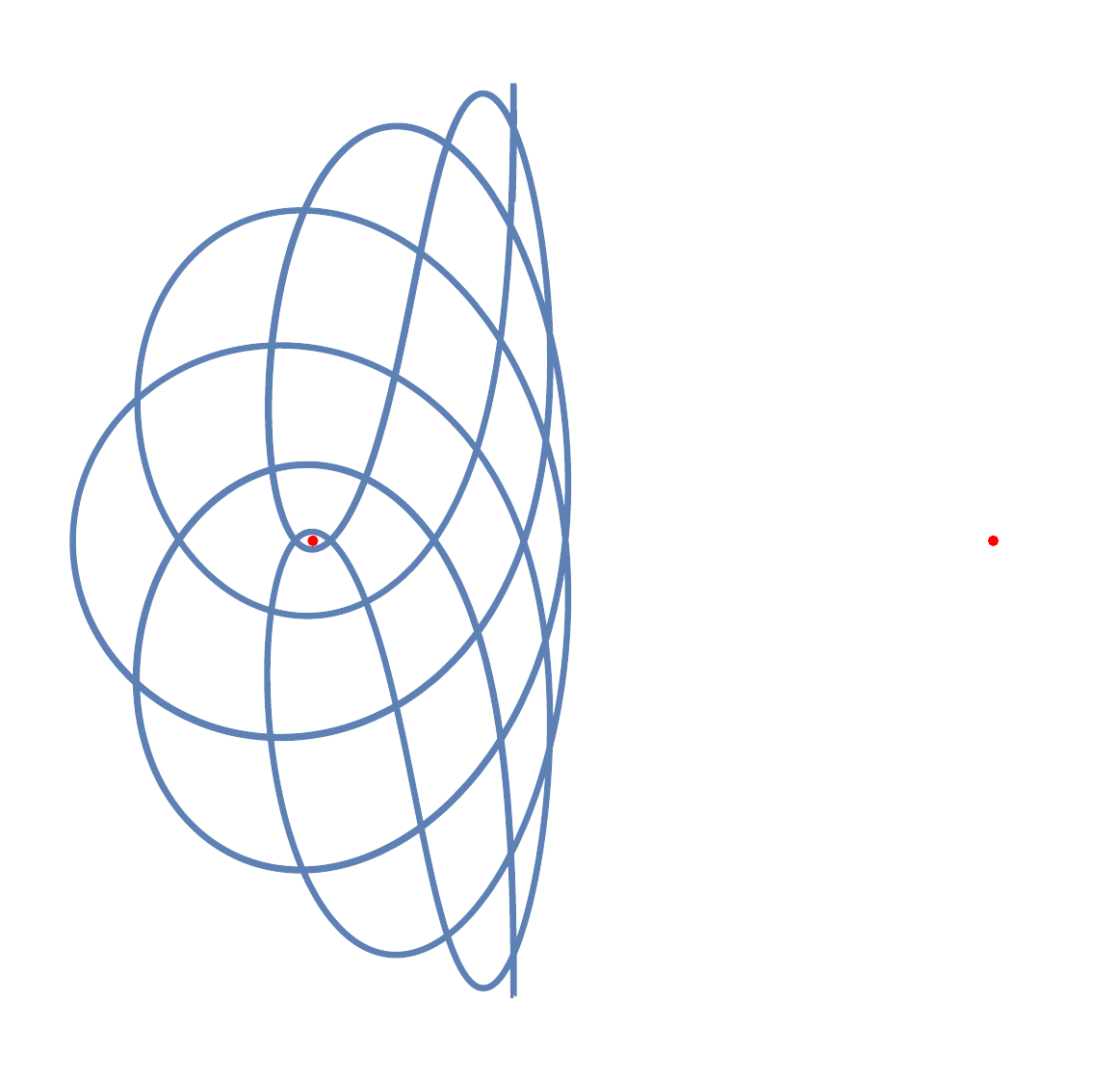}
\caption{$R=\tfrac{8}{5}$}
\end{subfigure}
\begin{subfigure}{0.32\textwidth}
  \centering
  \includegraphics[width=1.0\linewidth]{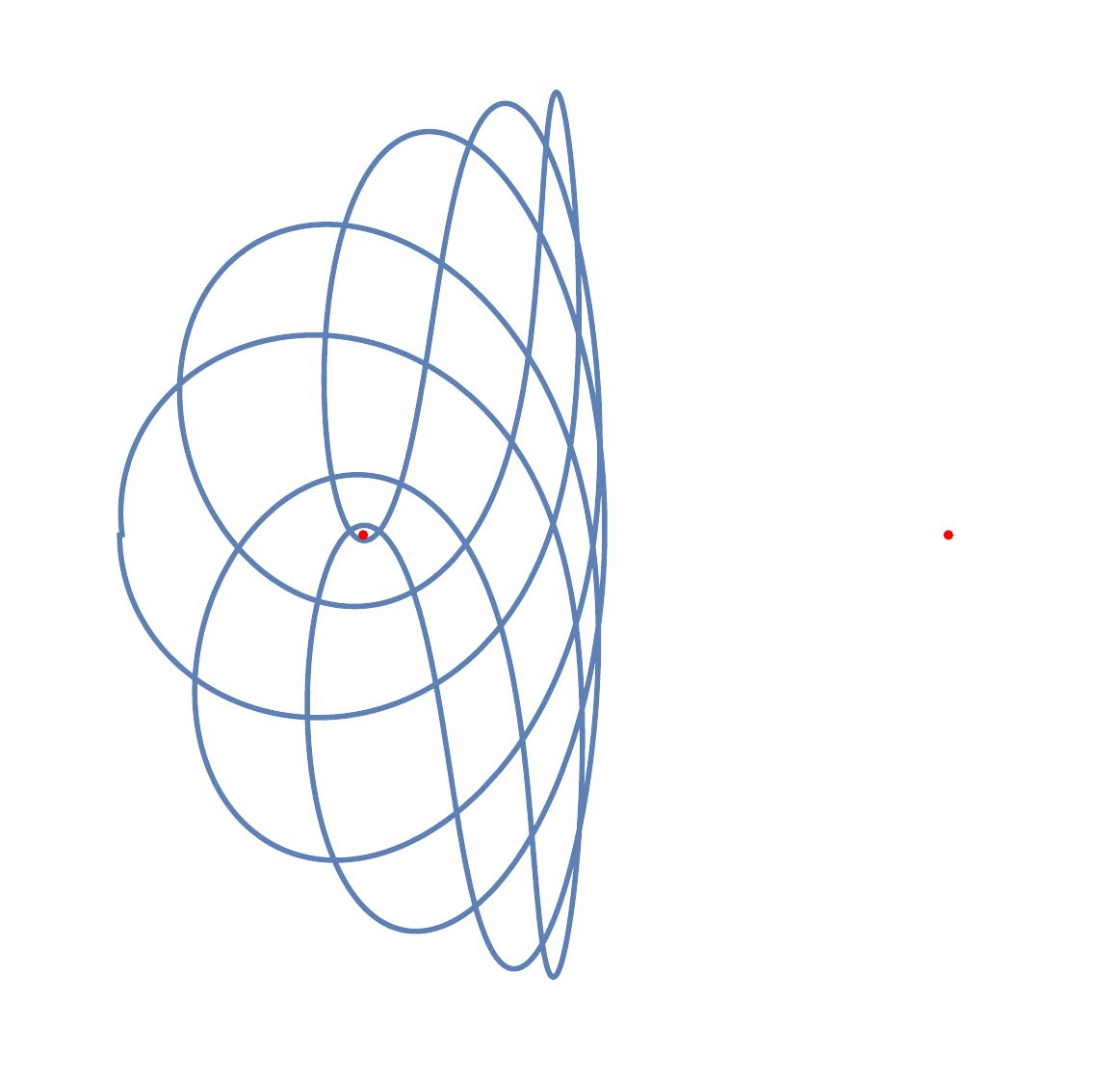}
\caption{$R=\tfrac{9}{5}$}
\end{subfigure}
\caption{Some symmetric periodic orbits in the Euler problem}
\label{d}
\end{figure}

In the following we only consider  energy levels $c<c_1$.  Note that the energy hypersurface $H_{\text{Euler}}^{-1}(c)$  consists of two bounded components: one is around the Earth and the other is around the Moon. By abuse of notation, we also call them the Earth component and the Moon component, respectively.   On each  component, there exist precisely two critical periodic orbits along which $dG$ and $dH_{\text{Euler}}$ are linearly dependent: the exterior and interior collision orbits, see Figure \ref{coll}. 
\begin{figure}[h]
 \centering
 \includegraphics[width=0.6\textwidth, clip]{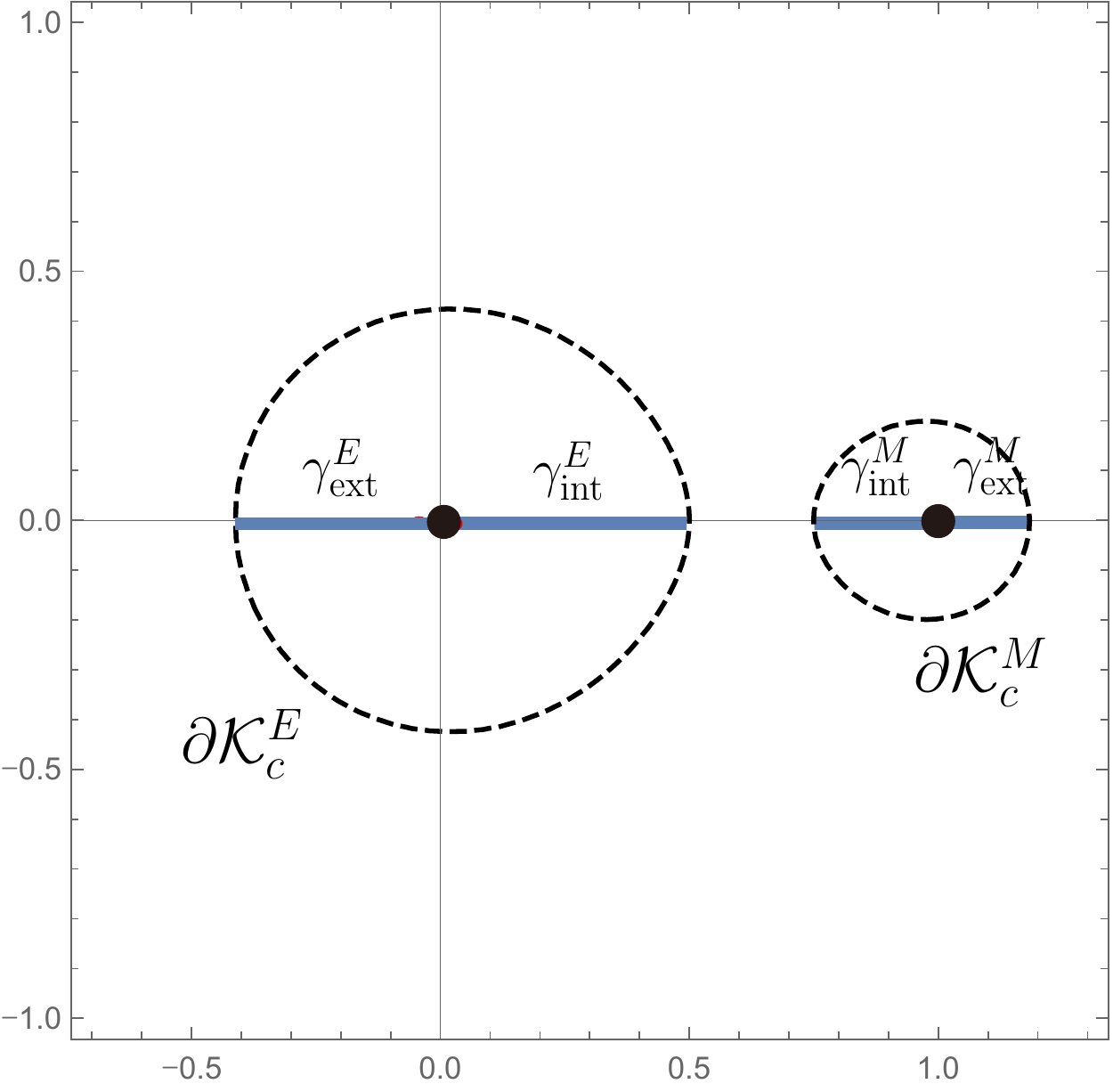}
 \caption{Exterior and interior collision orbits}
 \label{coll}
\end{figure}

\begin{Remark} \rm In the Euler problem, the exterior collision orbit and  interior collision orbit play roles as the retrograde circular orbit  and  direct circular orbit in the rotating Kepler problem, respectively, see \cite{Kim2}.
\end{Remark}

\begin{Remark}\label{remarkproperties} \rm The properties of families of periodic orbits in planar Stark systems described in Section \ref{szstyemsd}  can be obtained by means of the elliptic coordinates. Fix $(G,H_{\text{Euler}} )=(g,c)$. Without loss of generality we may consider only motions near the Earth.  The assertions will not depend on $\mu$.

\;

\noindent \textit{For collisions}: suppose that the satellite collides with the Earth, i.e., we have  $(\lambda, \nu) =(0, - \pi)$. Plugging this point into \eqref{eqmomentadouble} gives rise to 
\begin{equation}\label{juhee}
 p_{\lambda}=  \pm \sqrt{ \frac{  c+g+2}{2} } \quad \text{ and } \quad  p_{\nu} =  \pm \sqrt{\frac{  -c-g+2(1-2\mu)}{ 2}}.
\end{equation}
Since $-2 < g+c < -2(1-2\mu)$ for $(g,c) \in S$ and $-2(1-2\mu) < g+c < 2(1-2\mu)$ for $(g,c) \in S'$,  we see from \eqref{juhee} that a collision orbit exists on any $T_{k,l}$-torus  in the $S$- or the $S'$-region. Moreover, we observe that  $p_{\lambda}$ and  $p_{\nu}$ vanish only if  $(g,c) \in \ell_3$ and $(g,c) \in \ell_1$, respectively. Thus, both momenta never vanish at collisions along torus-type orbits. Instead,  $\lambda$ and $\nu$  change  signs before and after collisions.  The reflection symmetries of the phase portraits then imply that   the satellite retraces its former journey after the collision.

\;

\noindent \textit{For touching the boundary of the Hill's region}: in a similar way as above, we see that every $T_{k,l}$-torus corresponding to $(g,c) \in S$ or $(g,c) \in S'$ contains a periodic orbit which admits the condition $(p_{\lambda}, p_{\nu}) =(0,0)$ at which the satellite  hits the   boundary of the Earth component.   Moreover,  the momenta $p_{\lambda}$ and $p_{\nu}$ change  signs before and  after $(p_{\lambda}, p_{\nu}) =(0,0)$.  Again by the symmetries of the phase portraits    the satellite retraces its former journey after touching the boundary $\partial \mathcal{K}_c^E$,  cf. Lemma \ref{lemmabakc1}.

\;

\noindent \textit{For inverse self-tangencies}: suppose that a periodic  orbit has an inverse self-tangency. Since $(\lambda, \nu) \mapsto (q_1, q_2)$ is a branched double covering whose two sheets are related by $(\lambda , \nu) \mapsto (-\lambda, -\nu)$ and the phase portraits are symmetric to the position axes, i.e., $p\mapsto -p$, it follows that the inverse self-tangency is not isolated. Then by the compactness of the image of the periodic orbit  we conclude that every point on the orbit under consideration is an inverse self-tangency,  cf. Lemma \ref{lemmainverse}.

\;

\noindent \textit{For intersection points}:  consider  an intersection point along a torus-type orbit $\gamma$. Then by \eqref{eqmomentadouble} it is at most a quadruple point. Assume that it is a triple point: there exist $t_0, t_1, t_2 \in S^1$ such that $\gamma(t_0)=\gamma(t_1)=\gamma(t_2)$. Again by \eqref{eqmomentadouble} we may assume that $\dot{\gamma}(t_0) = - \dot{\gamma}(t_1)$ and $\dot{\gamma}(t_2) \neq \pm  \dot{\gamma}(t_0)$. It follows that $\gamma$ is   either a brake-brake, a collision-collision or a brake-collision orbit and hence there must exist $t_3\in S^1$ such that $\dot{\gamma}(t_3) \neq \pm  \dot{\gamma}(t_0)$ and $\dot{\gamma}(t_3) = -  \dot{\gamma}(t_2)$. This contradicts the assumption and hence the intersection point must be either a double point or a quadruple point. Moreover, if $\gamma$ is not one of the three distinguished orbits, then it admits only double points.
 \end{Remark}

Following the exposition given by  Verhaar \cite[Section 5]{thesis} we prove   
\begin{Proposition} \label{lemmacollision} Assume that $k>l$ are relatively prime and fix any $T_{k,l}$-torus associated to $(g,c)\in S$ or $(g,c)\in S'.$
\begin{enumerate}[label=(\roman*)]
\item  it   contains precisely two collision orbits  which can be  obtained from each other by the $q_1$-axis reflection.  If $k+l$ is even, they are brake-collision orbits and  if $k+l$ is odd, they are collision-collision orbits;
\item if $k+l$ is odd, then it  contains a unique brake-brake orbit which is symmetric with respect to the $q_1$-axis. If $k+l$ is even, there exist no brake-brake orbits.
\end{enumerate}
\end{Proposition}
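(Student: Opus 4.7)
The plan is to exploit the separable structure of the Euler problem in elliptic coordinates, reducing the proposition to a combinatorial analysis on the double-covered Liouville torus. On the double cover, the joint flow $\phi^\tau_K = \phi^\tau_{K_\lambda} \circ \phi^\tau_{K_\nu}$ is conjugate to a linear flow on $\T^2 = (\R/2\pi\Z)^2$. I would introduce phase angles $(\psi_\lambda, \psi_\nu)$ so that $\psi_\lambda = 0$ (resp.\ $\pi$) corresponds to the crossing $\lambda = 0$ with $p_\lambda > 0$ (resp.\ $p_\lambda < 0$), and analogously $\psi_\nu = 0, \pi$ for $\nu = -\pi$. Since $T_\nu/T_\lambda = k/l$, the joint flow has direction $(k, l)$ and the orbits on the torus are the lines of slope $l/k$.

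The next step is to identify the collision and brake loci on $\T^2$. By \eqref{eqmomentadouble}, a collision $(\lambda, \nu) = (0, -\pi)$ corresponds to four points $C_1, \ldots, C_4$ at $(\psi_\lambda, \psi_\nu) \in \{0, \pi\}^2$ parametrized by the four sign choices of $(p_\lambda, p_\nu)$, and a brake $(p_\lambda, p_\nu) = (0, 0)$ to four points $B_1, \ldots, B_4$ at $(\psi_\lambda, \psi_\nu) \in \{\pi/2, 3\pi/2\}^2$. The covering involution $\iota \colon (\lambda, \nu, p_\lambda, p_\nu) \mapsto (-\lambda, -\nu, -p_\lambda, -p_\nu)$ acts on $\T^2$ as translation by $(\pi, \pi)$, pairing $C_1 \leftrightarrow C_4$, $C_2 \leftrightarrow C_3$ and $B_1 \leftrightarrow B_4$, $B_2 \leftrightarrow B_3$, while the $q_1$-axis reflection lifts to the translation by $(0, \pi)$ (or equivalently, after composition with $\iota$, to the translation by $(\pi, 0)$).

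The core of the argument is then a modular-arithmetic case analysis. An orbit through a special point $(a\pi/2, b\pi/2)$ meets $(a'\pi/2, b'\pi/2)$ iff $l(a' - a) \equiv k(b' - b) \pmod{4}$, a condition that depends only on the parities of $k$ and $l$ (which, as $\gcd(k,l) = 1$, cannot both be even). When $k + l$ is even (both $k, l$ odd), the orbit through any $C_i$ passes through $\iota(C_i)$ together with two brake points, so the two candidate orbits in $\T^2$ descend to two physical brake-collision orbits, and no orbit meets two brakes alone. When $k + l$ is odd, the orbit through any $C_i$ meets exactly one other $C_j$ but no $B$'s, producing collision-collision orbits, while the orbits through brake points carry only brakes and project to a single brake-brake orbit. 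The remaining task in each case is to use the two lifts of the $q_1$-axis reflection to verify that the two collision orbits are exchanged by $q_1$-reflection (and that the brake-brake orbit is $q_1$-symmetric); the main technical obstacle is the combinatorial bookkeeping of $\iota$-invariance versus invariance under the two lifts of $q_1$-reflection, together with keeping straight which pairs of double-cover orbits project to a single physical orbit and which to two, but once this is set up each assertion follows by direct verification in each parity class.
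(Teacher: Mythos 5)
Your reduction to the linear flow on the angle torus is essentially the paper's argument in cleaner clothing: the paper also works in the elliptic coordinates, uses that $\lambda$ makes $k$ and $\nu$ makes $l$ cycles per period, places the collision/brake data via \eqref{eqmomentadouble} and \eqref{juhee}, and decides the four sign possibilities for the state at $t=T/2$ by the parity of $k$ and $l$, invoking the involutions $I_1,I_2$ for the reflection statement. Your mod-$4$ criterion $l(a'-a)\equiv k(b'-b)$ and the identification of $\iota$ with translation by $(\pi,\pi)$ and of the two lifts of the $q_1$-reflection with translations by $(0,\pi)$ and $(\pi,0)$ are correct (the quarter-phase placement of the brake points uses exactly the double reflection symmetry of the $\lambda$- and $\nu$-phase curves), and your case analysis for $k+l$ even, as well as the uniqueness and $q_1$-symmetry of the brake-brake orbit for $k+l$ odd, come out exactly as in the paper; your approach even handles automatically a point the paper simply assumes, namely that the second collision/brake occurs at $t=T/2$.

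The one place where your ``direct verification'' will not deliver the literal statement is the collision count for $k+l$ odd. In your own framework (say $k$ even, $l$ odd), the line through $C_1=(0,0)$ contains $(0,\pi)$ but not $(\pi,\pi)=\iota(C_1)$; hence the two collision lines upstairs are swapped by the deck translation $(\pi,\pi)$ (equivalently by the lift $(\pi,0)$ of the reflection), while the lift $(0,\pi)$ preserves each of them. Projecting to the physical plane, the two lines therefore give a \emph{single} collision-collision orbit passing through both downstairs collision points, and that orbit is itself $q_1$-symmetric -- not two physical orbits exchanged by the reflection. So to recover assertion (i) as stated you must count orbits in the doubled elliptic coordinates (which is what the paper's proof implicitly does; note that the paper's $I_2$ in fact preserves the orbit through $(p_\lambda^{\max},p_\nu^{\max})$, so even there the ``exchanged by the $q_1$-axis reflection'' claim concerns the two lifts rather than two distinct physical curves). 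This tension is inherited from the paper rather than created by you, and it is harmless for the later applications (which use only the two brake-collision orbits for $k+l$ even and the brake-brake orbit for $k+l$ odd), but your write-up should state explicitly which count -- double cover or physical plane -- you are using, since your promised bookkeeping is precisely where this distinction bites.
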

\begin{proof} $(i)$  Note that   $R = {T_{\nu}}/{ T_{\lambda}} =  {k}/{l}$ implies that   the satellite has $k$ cycles in $\lambda$ and $l$ cycles in $\nu$. Abbreviate  $T= k T_{\lambda}=l T_{\nu} $. Suppose that $\gamma(t) = (\lambda(t), \nu(t))$ admits a collision.   We choose the initial condition  to be the    collision:   $\gamma(0)  =(0, -\pi)$. Without loss of generality, we may assume that  $(p_{\lambda}(0), p_{\nu}(0)) = (p_{\lambda}^{\max}, p_{\nu}^{\max})$, where $p_{\lambda}^{\max}, p_{\nu}^{\max} >0$.  Assume that the second collision happens  at $t={T}/{2}$ from which we obtain that  $(p_{\lambda}(T/2) , p_{\nu}(T/2) ) = (p_{\lambda}^{\max}, p_{\nu}^{\max})$, $(p_{\lambda}^{\max}, -p_{\nu}^{\max})$, $(-p_{\lambda}^{\max}, p_{\nu}^{\max})$ or $(-p_{\lambda}^{\max}, -p_{\nu}^{\max})$, see Figure \ref{lemma35}. 
\begin{figure}[h]
 \centering
 \includegraphics[width=0.8\textwidth, clip]{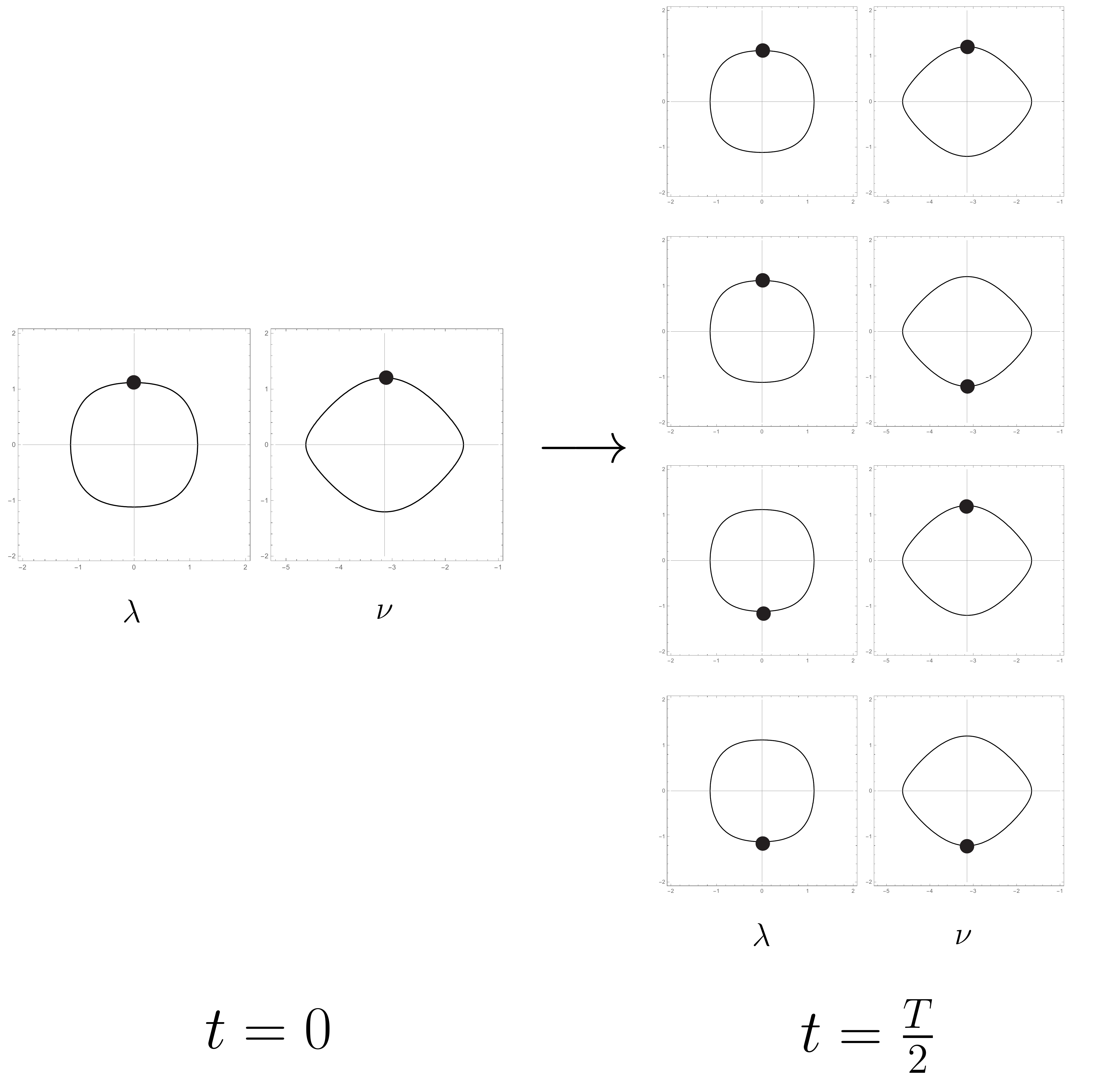}
 \caption{Four possibilities for the collision at $t={T}/{2}$. The first case is excluded since $\text{gcd}(k,l)=1$. The second and the third cases imply that the orbit under consideration is a (simple covered) collision-collision orbit. The last case means that the orbit is a (double covered) brake-collision orbit.}
 \label{lemma35}
\end{figure}
Note that for each $\sigma = \lambda, \nu$ that  $p_{\sigma} \mapsto p_{\sigma}$ and $p_{\sigma} \mapsto -p_{\sigma}$ at $t={T}/{2}$ imply that $ {T}/{2}$ is an   even  multiple and an  odd multiple of ${T_{\sigma}}/{2}$, respectively. It follows immediately that the first case   $(p_{\lambda}(T/2) , p_{\nu}(T/2) ) = (p_{\lambda}^{\max}, p_{\nu}^{\max})$ contradicts the fact that $k$ and $l$ are relatively prime.

Assume that the second case $(p_{\lambda}(T/2) , p_{\nu}(T/2) ) = (p_{\lambda}^{\max}, -p_{\nu}^{\max})$ which implies that $k$ is even and $l$ is odd.  In this case $\gamma$ is a simple covered $T$-periodic collision-collision orbit. Observe  that the Hamiltonian $K$ admits the following  anti-symplectic involutions:
$$
I_1 : (\lambda, \nu, p_{\lambda}, p_{\nu} ) \mapsto (\lambda, \nu, -p_{\lambda}, -p_{\nu} )
$$
and 
$$
I_2 : (\lambda, \nu, p_{\lambda}, p_{\nu} ) \mapsto (-\lambda, \nu,  p_{\lambda}, -p_{\nu} ).
$$
The first involution correspond to the time reversal under which the image of $\gamma$ does not change. In view of \eqref{eqmomentadouble} the second involution corresponds to the $q_1$-axis reflection. The equations \eqref{juhee} then show that there exist precisely two collision-collision orbits which are obtained from each other by the $q_1$-axis reflection.  For the third case, we obtain the same result. 

For the remaining case $(p_{\lambda}(T/2) , p_{\nu}(T/2) ) = (-p_{\lambda}^{\max}, -p_{\nu}^{\max})$ both $k$ and $l$ are odd. In this case the satellite comes back to the collision at $t=T/2$ by retracing its former journey. Since $t=T/2$ is the time at which the second collision happens, this shows that there exists a unique $t_0 \in (0,T/2)$ at which the satellite touches the boundary of the Hill's region. Therefore, as a $T$-periodic orbit $\gamma$ is a  double covered  brake-collision orbit.  By the same reasoning  as in the previous case, there exist precisely two brake-collision orbits and one is obtained from the other by the reflection with respect to the $q_1$-axis. This first assertion is proved.

$(ii)$ Assume that $\gamma$ is a $T$-periodic orbit which has  a braking point   at $t=0$, i.e., $(p_{\lambda}(0), p_{\nu}(0))=(0,0)$. Without loss of generality, we may assume that $(\lambda(0), \nu(0))=( \lambda^{\max}, \nu^{\max})$, $  \lambda^{\max}, \nu^{\max}>0$.   As in the proof of the first assertion,  we have  four possibilities for the second braking at $t=T/2$: $(\lambda(T/2), \nu(T/2))=( \lambda^{\max}, \nu^{\max})$, $( \lambda^{\max}, -\nu^{\max})$, $( -\lambda^{\max}, \nu^{\max})$, or $( -\lambda^{\max}, -\nu^{\max})$. In a similar way, we see that the first case is impossible and the last case gives rise to a doubly-covered $T$-periodic brake-collision orbit. From the second and third cases we obtain a unique simple covered $T$-periodic brake-brake orbit which is symmetric with respect to the $q_1$-axis.  This proves the second assertion and completes the proof of the proposition.
\end{proof}
\begin{Remark} \rm   That any $T_{k,l}$-torus in the $S$-region contains precisely two collision orbits was already proved by Dullin and Montgomery by means of symbolic dynamics, see \cite[Corollary 8]{Dullin}. They also observed the existence of brake-brake orbits, see \cite[Section 9]{Dullin}.
\end{Remark}

We conclude this section with the following lemma.

\begin{Lemma}\label{lemmacoversing} Fix any $T_{k,l}$-torus family. If $k+l$ is even, then it   bifurcates from the $l$-fold  covered interior collision orbit and dies at the $k$-fold covered exterior collision orbit. If $k+l$ is odd, then  it  bifurcates from the $2l$-fold covered interior collision orbit and ends at $2k$-fold covered exterior collision orbit. 
\end{Lemma}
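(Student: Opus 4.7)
The strategy is to exploit the separability of the Euler problem in elliptic coordinates, together with careful bookkeeping of the branched double cover \eqref{eqtrnasofrmation}. Along any $T_{k,l}$-torus family the characteristic period in the double cover is $T = kT_\lambda = lT_\nu$, where $T_\lambda$ and $T_\nu$ depend smoothly on $(g,c)$. As $(g,c)$ varies inside $S$ (or $S'$) with the rotation number $R=k/l$ fixed, the family can only terminate where one of the two one-dimensional phase portraits collapses to a point. By \eqref{eqmomentadouble}, the $\lambda$-portrait collapses on $\ell_3$, leaving the interior collision orbit $\lambda\equiv 0$, while the $\nu$-portrait collapses on $\ell_1$ (or $\ell_2$), leaving the exterior collision orbit $\nu\equiv\pi$.

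First I would compute the minimal periods of these two limiting orbits in the true phase space. Under the $2$-to-$1$ map \eqref{eqtrnasofrmation}, one full $\nu$-cycle on $\lambda\equiv 0$ traces the segment between the Earth and its interior turning point twice, so the interior collision orbit has minimal period $T_\nu/2$. Symmetrically, the exterior collision orbit has minimal period $T_\lambda/2$.

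Next I would identify the true period of a generic orbit on the $T_{k,l}$-torus. Since the separated Hamiltonians $K_\lambda$ and $K_\nu$ are even in both their position and momentum variables, each one-dimensional phase portrait is a simple closed curve centered at the origin and invariant under $(\lambda, p_\lambda)\mapsto(-\lambda,-p_\lambda)$, respectively $(\nu,p_\nu)\mapsto(-\nu,-p_\nu)$. Because the flow commutes with this central involution, the time-$T_\lambda/2$ map on the $\lambda$-factor must equal that involution by uniqueness, and analogously for $\nu$. Consequently, at time $T/2$ the image of $(\lambda_0,\nu_0,p_{\lambda,0},p_{\nu,0})$ in the double cover is $((-1)^k\lambda_0,(-1)^l\nu_0,(-1)^kp_{\lambda,0},(-1)^lp_{\nu,0})$, which agrees with the image of the initial point under the deck involution \eqref{phaseinvolusion} exactly when both $k$ and $l$ are odd, i.e.\ when $k+l$ is even. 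Hence the true period of a generic $T_{k,l}$-orbit in $(q_1,q_2)$-space equals $T/2$ if $k+l$ is even and $T$ if $k+l$ is odd.

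The covering numbers now follow by dividing the true period of a generic orbit by the minimal period of the relevant limiting collision orbit: for $k+l$ even one obtains $(T/2)/(T_\nu/2)=l$ at $\ell_3$ and $(T/2)/(T_\lambda/2)=k$ at $\ell_{1,2}$, while for $k+l$ odd the ratios become $T/(T_\nu/2)=2l$ and $T/(T_\lambda/2)=2k$ respectively, which is the assertion. The main technical obstacle will be verifying that the $R=k/l$ curve inside $S$ (or $S'$) actually reaches both bounding arcs rather than some other degeneration; this should follow from an asymptotic analysis of the Strand--Reinhardt elliptic-integral formulas for $T_\lambda$ and $T_\nu$ as the corresponding amplitude shrinks to zero, together with the monotonicity properties of $R$ recorded in \cite{Dullin, Kim2}.
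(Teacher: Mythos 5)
Your argument is correct, but it follows a genuinely different route from the paper. The paper reduces the lemma to counting intersections of a $T_{k,l}$-type orbit with the two halves of the $q_1$-axis: by Dullin--Montgomery these counts depend only on $(k,l)$, so one evaluates them on the distinguished representatives of Proposition \ref{lemmacollision} (a brake-collision orbit for $k+l$ even, a collision-collision orbit for $k+l$ odd), finds $2k$ crossings at $\lambda=0$ and $2l$ at $\nu=-\pi$ per double-cover period, and combines this with the double/simple covering dichotomy from that proposition to read off the multiplicities of the limiting collision orbits. You instead argue purely with periods: the limiting interior and exterior collision orbits have minimal physical periods $T_\nu/2$ and $T_\lambda/2$, the generic torus orbit has minimal physical period $T/2$ or $T$ according to the parity of $k+l$ (your half-period argument, identifying the time-$T_\sigma/2$ map on each separated factor with the central involution and comparing with the deck transformation \eqref{phaseinvolusion}, is sound -- note only that the $\nu$-portrait of the Earth component is centered at $\nu=\pm\pi$ rather than at the origin, and that one should observe the involutions act freely on the level curves so the time shift is forced to be the half period), and the covering numbers are the ratios. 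This buys independence from Dullin's syzygy theorem and re-derives the parity dichotomy of Proposition \ref{lemmacollision} on the fly, and it attaches the multiplicities to the limit orbits more directly than the crossing count does; the paper's route, by contrast, leans on machinery it has already set up and is shorter on that account. The point you defer -- that the fixed-$R$ curve in the $(g,c)$-plane really terminates on $\ell_3$ and on $\ell_{1,2}$ -- is likewise taken for granted in the paper's proof (it is implicit in the bifurcation diagram and the cited literature), so flagging it does not leave you with a gap the paper fills.
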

\begin{proof} It suffices to determine the intersection numbers of  $T_{k,l}$-type orbits with   the negative and positive $q_1$-axis. By \cite[Theorem 1]{Dullin} these numbers only  depend on $k$ and $l$. Therefore, we just need to choose suitable representatives. In view of the previous proposition, we choose a brake-collision orbit if $k+l$ is even and a collision-collision orbit if $k+l$ is odd.  Recall that along a $T_{k,l}$-type orbit $\gamma$ the variable $\lambda$ makes $k$ cycles and the variable $\nu$ makes $l$ cycles. Since $(\lambda, \nu)$  is a 2-1 (branched) covering, this shows that $\gamma$ intersects the positive $q_1$-axis  precisely $2k$-times  at which we have  $\lambda=0$ and the negative $q_1$-axis precisely $2l$-times at which we have $\nu=-\pi$. The proof of Proposition \ref{lemmacollision} shows that   we see that $\gamma$ is double covered and single covered if $k+l$ is even and if $k+l$ is odd, respectively.   This finishes the proof of the lemma.
\end{proof}

\section{Main argument}\label{secmain}

In this section we shall prove the main result of this paper. 

\subsection{Invariants for the rotating Kepler problem}  In view of the equation  \eqref{eqflowcommute}, we see that on a fixed $T_{k,l}$-torus, the trajectories of any two orbits can be obtained from each other by a rotation in the $q$-plane. In particular, they have the same shape. This implies that on each $T_{k,l}$-torus the invariants  $\mathcal{J}_1$ and $\mathcal{J}_2$ are  constant. In other words, in studying these invariants without loss of generality we may consider each $T_{k,l}$-torus family as a one-parameter family.

Recall that we assume that  $E=E_{k,l}<1/2$ (or equivalently $k>l$).  

\begin{Theorem}\label{maintheoremRKP} {\rm (\cite[Theorem 1.1 and Proposition 3.7]{Kim})} Fix relatively prime $k>l$.  The $T_{k,l}$-torus family in the rotating Kepler problem is a Stark-Zeeman homotopy and  its invariants are given by
$$
\mathcal{J}_1 (T_{k,l})=   1 - k + (k^2-l^2)/2
$$
and
$$
\mathcal{J}_2 (T_{k,l})= \begin{cases} (k-1)^2-l^2 & \text{ if $k+l$ is odd} \\ 1 - k + (k^2-l^2)/4 & \text{ if $k+l$ is even.} \end{cases}
$$
\end{Theorem}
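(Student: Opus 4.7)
The strategy is to verify that the $T_{k,l}$-torus family is a Stark-Zeeman homotopy in the sense of Definition \ref{defSZ}, so that by Proposition \ref{propsduiub44} the two invariants $\mathcal{J}_1$ and $\mathcal{J}_2$ are constant along the family and can therefore be evaluated at any convenient representative.

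First, I would note that the rotating Kepler problem is a planar Stark-Zeeman system with $A=(q_2,-q_1)$ and nonzero magnetic form $B\equiv 2$, so the analysis of Section \ref{secperSZ} classifies the codimension-one degenerations that can occur within a family of periodic orbits as precisely the disasters $(I_\infty)$, $(I_0)$, $(II^+)$, and $(III)$; direct self-tangencies are ruled out by Remark \ref{rmknodirectinhismil}. Using the explicit parametrization $\gamma^{\text{RKP}}(t)=e^{it}\gamma(t)$ coming from the commuting-flow decomposition \eqref{eqflowcommute}, I would verify that these degenerations occur only at isolated values of the parameter along a $T_{k,l}$-family, which amounts to a transversality check on the elliptic parameters (angular momentum, eccentricity, argument of perihelion).

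Second, I would evaluate $\mathcal{J}_1$ on a representative that sits near the degenerate endpoint of the family identified in Section \ref{secrkpds}, namely a $T_{k,l}$-orbit just off the $(k-l)$-fold covered circular orbit. Writing the Kepler ellipse to first order in the eccentricity $\varepsilon$ as $\gamma(t)\approx r(1-\varepsilon\cos\omega_i t)e^{i\omega_i t}$, the rotating-frame curve becomes an explicit epicyclic curve with $k-l$ loops about the origin and $k$ radial oscillations. Its winding number $w_0=\pm(k-l)$ is read off immediately, while its transverse double points can be counted by solving $\gamma^{\text{RKP}}(s)=\gamma^{\text{RKP}}(t)$, giving a concrete number $N(k,l)$. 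A regular homotopy of this curve to a standard curve $K_N$, carried out so that every direct self-tangency crossed is tallied with its sign, then yields $J^+$, and combining gives $\mathcal{J}_1=J^++w_0^2/2=1-k+(k^2-l^2)/2$.

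Third, for $\mathcal{J}_2$ I would apply the Levi-Civita map $L(z)=z^2$ to the same representative. When $k+l$ is odd, $w_0=k-l$ is odd and $L^{-1}(\gamma^{\text{RKP}})$ is a single closed curve, so the relation from Proposition \ref{theoremsdih4} forces $\mathcal{J}_2=2\mathcal{J}_1-1=(k-1)^2-l^2$ with no further work. When $k+l$ is even, the preimage splits into two components related by the $\pi$-rotation; picking either one and repeating the double-point count and standard-curve reduction on the lifted epicyclic curve produces $\mathcal{J}_2=1-k+(k^2-l^2)/4$.

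The hardest step is the self-intersection enumeration in the second paragraph: even for a near-circular representative, the combinatorics of $\gamma^{\text{RKP}}(s)=\gamma^{\text{RKP}}(t)$ depend delicately on the arithmetic of $(k,l)$, and the regular homotopy to the standard model must be arranged so that every change of $J^+$ across a direct self-tangency is correctly tallied. Choosing the circular endpoint, where the geometry is very constrained and the nearby torus orbit is an explicit epitrochoid, rather than a generic member of the family, is essential for making this enumeration tractable.
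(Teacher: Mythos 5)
The paper does not prove this statement: Theorem \ref{maintheoremRKP} is imported verbatim from the reference \cite{Kim} (Theorem 1.1 and Proposition 3.7 there), so there is no in-paper argument to measure your proposal against. That said, your plan is coherent and is essentially the ``Ptolemy--Copernicus'' epicycle route that the paper's own remark after the theorem singles out as a viable but more laborious alternative proof (deferred to \cite{Kimphd}): establish the Stark-Zeeman homotopy property of the $T_{k,l}$-family, then evaluate $\mathcal{J}_1$ and $\mathcal{J}_2$ on the explicit epitrochoid-like representative near the $(k-l)$-fold covered direct circular orbit, where $w_0=\pm(k-l)$ and the self-intersections can be enumerated. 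Your bookkeeping is consistent: for $w_0$ odd the reduction $\mathcal{J}_2=2\mathcal{J}_1-1$ via Proposition \ref{theoremsdih4} does give $(k-1)^2-l^2$, and spot checks (e.g.\ $(k,l)=(2,1)$ giving $\mathcal{J}_1=1/2$ from an embedded near-circle, and $(k,l)=(3,1)$ giving $J^+=0$ for the three-crossing doubly-winding curve, hence $\mathcal{J}_1=2$) confirm the formula. The one substantive gap is the step you yourself flag: the count $N(k,l)$ of transverse double points of the rose curve $a\bigl(1-\varepsilon\cos(kt/l)\bigr)e^{i(k-l)t}$ and the signed tally of direct self-tangencies in the regular homotopy to a standard curve are asserted, not carried out, and this is where all the arithmetic content of the formula $J^+=1-k+l(k-l)$ actually lives; the same computation must then be redone from scratch for the Levi-Civita lift in the $k+l$ even case, since Proposition \ref{theoremsdih4} is unavailable there. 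As a proof plan it is sound; as a proof it is incomplete precisely at its combinatorial core.
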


\begin{Remark} \rm As mentioned in \cite[Remark 3.15]{Kim}, one can prove the previous theorem in an elementary way (but with more work) by means of the ideas of Ptolemy and Copernicus. The elementary proof will be given in \cite{Kimphd}.
\end{Remark}

\;\;\;

\subsection{Invariants for the Euler problem} From now on we will compute the invariants of the $T_{k,l}$-torus families in the Euler problem. Without loss of generality, we focus on the Earth component. Note that on each $T_{k,l}$-torus, periodic orbits form a $S^1$-family. In view of  Remark \ref{remarkproperties} and  Proposition  \ref{lemmacollision}  the $S^1$-family is a Stark   homotopy.  Consequently, in view of calculating the invariants,  as for the rotating Kepler problem, we may consider each $T_{k,l}$-torus family, which is in fact a two-parameter family,  as a one-parameter family.  The same remark and lemmas imply

\begin{Proposition}\label{sdsdghomo} Each $T_{k,l}$-torus family in the $S$- or $S'$-region is a Stark-Zeeman homotopy.
\end{Proposition}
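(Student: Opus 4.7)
The plan is to verify Definition \ref{defSZ} directly for a one-parameter reduction of the $T_{k,l}$-torus family. Because the Euler problem is a planar Stark system (the magnetic term $B$ is identically zero), it is enough to check the Stark-homotopy form of the definition, namely that each member of the family is a generic immersion in $\C \setminus \{(0,0)\}$ except for the disasters $(II^+)$ and $(III)$ occurring at finitely many parameter values, while the disasters $(I_\infty)$, $(I_0)$ and $(II^-)$ never occur.

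First I would exclude the three ``forbidden'' disasters. The cusp disasters $(I_\infty)$ and $(I_0)$ are killed by $B\equiv 0$: Case 1\,(ii) (\emph{i.e.}\ Lemma \ref{lemmabakc1}) and Case 2\,(ii) of Section \ref{secperSZ} show that at a boundary touch or a collision the satellite bounces back rather than forming a cusp, and the ``For collisions'' and ``For touching the boundary of the Hill's region'' paragraphs of Remark \ref{remarkproperties} confirm this for the Euler problem explicitly. Direct self-tangencies $(II^-)$ are forbidden by Remark \ref{rmknodirectinhismil}, since periodic orbits solve a time-independent Hamiltonian ODE. Next I would locate the $(II^+)$ events: by Lemma \ref{lemmainverse} together with Cases 1 and 2, an inverse self-tangency along a $T_{k,l}$-type orbit can only occur at one of the three distinguished orbits (brake-brake, collision-collision, brake-collision), and Proposition \ref{lemmacollision} shows that each $T_{k,l}$-torus carries at most three such orbits (two collisions plus, when $k+l$ is odd, a single brake-brake). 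Thus $(II^+)$ events are isolated along any one-parameter section of the family. Finally, the ``For intersection points'' paragraph of Remark \ref{remarkproperties} asserts that self-intersections of non-distinguished torus-type orbits are only double, so each such orbit is a genuine generic immersion in $\C\setminus\{(0,0)\}$; potential triple-point encounters $(III)$ between two transverse double points are codimension one and therefore generic and isolated along a generic path.

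The main obstacle I expect is bookkeeping around the two-parameter nature of the family: each $T_{k,l}$-torus carries an $S^1$-family of orbits, and varying $(g,c)$ along the level curve $R=k/l$ supplies the second parameter. To fit into Definition \ref{defSZ}, one must pin down a one-parameter path in this two-dimensional family (for example, at each $(g,c)$ selecting a canonical symmetric representative) and check that the loci of distinguished orbits and triple-point configurations are met transversely. Since by Proposition \ref{lemmacollision} the distinguished orbits form finitely many smooth sub-loci, a generic path meets them in finitely many points, and together with Steps above this yields the claim; the statement then upgrades from ``Stark homotopy'' to ``Stark-Zeeman homotopy'' trivially, since a Stark homotopy is the special case $B\equiv 0$.
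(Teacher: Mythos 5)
Your proposal is correct and follows essentially the same route as the paper, which derives the statement directly from Remark \ref{remarkproperties} (no cusps, bouncing at collisions and boundary touches, inverse self-tangencies only along the distinguished orbits, non-distinguished orbits having only double points) together with Proposition \ref{lemmacollision} (finitely many distinguished orbits per torus); you merely spell out the bookkeeping that the paper leaves implicit. The only cosmetic difference is your invocation of a genericity/codimension argument for triple points, which the paper does not need since Remark \ref{remarkproperties} already rules out triple points along torus-type orbits outright.
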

\noindent Thus, in order to calculate the invariants for the $T_{k,l}$-torus families in the Euler problem it suffices to compute for a suitable  periodic orbit in that familiy. We choose a brake-brake orbit for $k+l$ odd and a brake-collision orbit for $k+l$ even. In view of  Proposition \ref{sdfkluliugliu3gsgd} it remains to determine the number of quadruple points.

\begin{Proposition}\label{propoeeuloerodd} Fix a $T_{k,l}$-torus. The following are true:
\begin{enumerate}[label=(\roman*)]
\item if $k+l$ is odd,  then the brake-brake orbit  has precisely $ {(k-1)(l-1)}+ (k+l-1)/2$ quadruple points;
\item if $k+l$ is even, then the two brake-collision orbits have precisely $ {(k-1)(l-1)}/4$ quadruple points.
\end{enumerate}
\end{Proposition}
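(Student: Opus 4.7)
The plan is to work in elliptic coordinates, where the Euler dynamics separates, and to count self-intersections in the $q$-plane by tracking them through the chain
\[
(\theta_\lambda,\theta_\nu)\;\xrightarrow{\;4\text{-to-}1\;}\;(\lambda,\nu)\;\xrightarrow{\;2\text{-to-}1\;}\;(q_1,q_2),
\]
where $(\theta_\lambda,\theta_\nu)\in(\R/2\pi\Z)^2$ are angle coordinates on the Liouville torus, the first arrow folds each angle via the phase-portrait involution $p_\sigma\mapsto -p_\sigma$, and the second arrow is the branched double cover \eqref{eqtrnasofrmation} with covering involution \eqref{phaseinvolusion}. On a $T_{k,l}$-torus the flow is linear in $(\theta_\lambda,\theta_\nu)$, and after rescaling time a $T_{k,l}$-type orbit becomes the embedded $(k,l)$-curve $s\mapsto (ks,ls)$, $s\in[0,2\pi)$.

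First I would count self-intersections of the projected curve in the $(\lambda,\nu)$-rectangle: these are pairs $s_1\neq s_2$ in $[0,2\pi)$ satisfying $ks_1\equiv\epsilon_1 ks_2$ and $ls_1\equiv\epsilon_2 ls_2\pmod{2\pi}$ for some $(\epsilon_1,\epsilon_2)\in\{\pm 1\}^2$. The case $(+,+)$ produces no genuine crossings since $\gcd(k,l)=1$; the mixed cases $(+,-)$ and $(-,+)$ produce a combinatorial grid of ordinary double points whose cardinality is a direct count; the case $(-,-)$ encodes the time-reversal symmetry present on any brake-brake or brake-collision representative, so the corresponding solutions record that the orbit's image is traced twice. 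By Remark \ref{remarkproperties} every intersection of a $T_{k,l}$-type orbit in the $q$-plane is either double or quadruple, and a quadruple point is precisely a pair $\{A,-A\}$ of $(\lambda,\nu)$-double points exchanged by the covering involution, whereas an unpaired $(\lambda,\nu)$-double point projects to a double point in the $q$-plane.

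Since the invariants are constant on each $T_{k,l}$-torus (all $T_{k,l}$-type orbits share the same shape up to the $S^1$-action on the torus), it suffices to count quadruple points for one convenient representative. For $k+l$ odd I pick the $q_1$-axis-symmetric brake-brake orbit of Proposition \ref{lemmacollision}(ii); the braking condition $(p_\lambda,p_\nu)=(0,0)$ corresponds to $\theta_\lambda,\theta_\nu\in\{0,\pi\}$, which, together with $\gcd(k,l)=1$, pins down $s=0$ and $s=\pi$ as the two braking instants. The additional symmetry $s\mapsto -s$, combined with the $4$-to-$1$ folding, reduces Step~2 to an explicit enumeration yielding $(k-1)(l-1)+(k+l-1)/2$ quadruples. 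For $k+l$ even I pick one of the two brake-collision orbits of Proposition \ref{lemmacollision}(i); two of the would-be braking instants are replaced by collisions at $(\lambda,\nu)=(0,\mp\pi)$, and the parities of the count shift accordingly to give $(k-1)(l-1)/4$.

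The main obstacle will be the careful bookkeeping of overlaps between the non-trivial sign cases of Step~2, in particular separating the $(\lambda,\nu)$-double points that pair up under the covering involution from those that do not, and verifying that no double point is lost to a coincidence with a branch point, the boundary of the Hill region, or the fixed set of the $q_1$-axis reflection. Drawing the $(k,l)$-line on $(\R/2\pi\Z)^2$ together with the grid $\theta_\sigma\in\{0,\pi\}$ of braking/collision preimages should make the enumeration transparent.
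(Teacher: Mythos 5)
Your reduction to the Liouville torus, the linear $(k,l)$-flow, and the sign-case congruences is essentially the same combinatorial engine as the paper's Claims 1 and 2, but the step that converts $(\lambda,\nu)$-data into multiplicities in the $q$-plane is wrong, and with it the count cannot come out right. Two problems. First, for the brake-brake (and brake-collision) representative the image in the $q$-plane is traversed \emph{twice} per minimal period (the motion retraces after the brakes/collisions), so \emph{every} geometric self-crossing of the image is automatically a quadruple point of $\gamma$; there are no double points of $\gamma$ at all to which your ``unpaired $(\lambda,\nu)$-double points'' could project. Second, a pair $\{A,-A\}$ of double points of the configuration curve exchanged by the covering involution would force a point of the plane that is hit eight times within one minimal period, which is impossible: by \eqref{eqmomentadouble} there are at most four phase points of the level set $\{H_{\text{Euler}}=c,\,G=g\}$ over a given $q$, which is exactly why intersections are at most quadruple (Remark \ref{remarkproperties}). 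So the set your dictionary declares to be ``the quadruple points'' is empty, and your procedure would return zero.

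What the dictionary misses is the main mechanism: a quadruple point of $\gamma$ arises either from an (unpaired) transverse self-crossing of the curve in the $(\lambda,\nu)$-rectangle, or from a pair of \emph{simple} points $A\neq -A$ of that curve identified by the branched double cover, i.e.\ from solutions of $\lambda(s_1)=-\lambda(s_2)$, $\nu(s_1)=-\nu(s_2)$ -- congruences that never appear in your Step 2, whose four sign cases only detect equal $(\lambda,\nu)$. These cover-induced crossings are not a marginal boundary effect: for $(k,l)=(2,1)$ the configuration curve is an embedded arc with no self-crossings whatsoever, yet the brake-brake orbit has exactly one quadruple point (the formula gives $0+1=1$), and it comes from one such pair of simple points with $\lambda=0$; in general these account for all the crossings on the $q_1$-axis and more. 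The paper sidesteps the issue by never comparing with the $(\lambda,\nu)$-picture globally: it passes to the half-period curve (so geometric crossings of the image are what must be counted), counts the on-axis crossings directly from the $q_1$-reflection symmetry of Proposition \ref{lemmacollision}, and runs the time-grid congruence argument only on the positive part $\gamma^{+}$, where no covering ambiguity arises. To salvage your route you would need to add the deck-transformation congruences to Step 2 and replace Step 3 by the observation that, the image being traced twice, every geometric crossing counts as a quadruple point; the same correction is needed in case (ii).
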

\begin{proof}  (i) We only consider the case that $k$ is even and $l$ is odd. The other case    can be  proved in a similar way.  Recall that all intersection points along the brake-brake orbit $\gamma$ are quadruple points. Let $T>0$ be the minimal period of $\gamma$. Note that the restriction  ${\gamma}|_{[0,T/2]}$ has the same image with ${\gamma}$ and a quadruple point of $\gamma$ is a double point of $\gamma|_{[0,T/2]}$. In the following we study the restriction $\gamma|_{[0,T/2]}$ instead of $\gamma$. By abuse of notation, we use the same symbol $\gamma$ for the restriction and  we identify the orbit $\gamma$ with its image. 

 Recall that $\gamma$ intersects the $q_1$-axis if and only if we have $\lambda=0$ or $\nu=-\pi$. Since $\lambda$ and $\nu$ make $k/2$ cycles  and $l/2$ cycles along $\gamma$, respectively, we obtain that $\lambda=0$ and $\nu=-\pi$ are attained precisely $k$ times and $l$ times, respectively. Since  $\gamma$ is a brake-brake orbit which is symmetric with respect to the $q_1$-axis, those points give rise  to $(k+l-1)/2$ double points and a single point of $\gamma$ on the $q_1$-axis.

 Let $\gamma^{\pm} = \gamma \cap \left\{   \pm q_1 \geq 0  \right\}$ be the positive and negative parts of $\gamma$, respectively,  so that $\gamma = \gamma^+ + \gamma^-$. Since $\gamma$ is symmetric with respect to the $q_1$-axis,  $\gamma^+$ and $\gamma^-$ have the same number of double points and they do not have any double points on the $q_1$-axis. Therefore,  it suffices to count double points on $\gamma^+$.   Note that the period of $\gamma^+$ is given by $T/4$.

Choose the initial point of $\gamma^+$ by the braking point $(\lambda, \nu) = ( \lambda^{\max}, \nu^{\max})$, i.e., the point at which the satellite touches the boundary   $ \partial \mathcal{K}_c^{\text{E}}$. Note that along $\gamma^+$ the variables $\lambda$ and $\nu$ make $k$ and $l$ quarter-cycles, respectively.   Each quarter-cycle for $\lambda$ (or for $\nu$) corresponds to increase or decrease of $\lambda$ (or of $\nu$) between $\lambda=0$ and $\lambda=\lambda^{\max}$ (or between $\nu=-\pi$ and $\nu=\nu^{\max}$). 

By abuse of notation and for the sake of convenience, we use the symbol $\nu$ for $\nu +\pi$ so that the collision with the Earth corresponds to $(\lambda, \nu)=(0,0)$.  We now view the variables $\lambda$ and $\nu$ as functions of time. More precisely, in the graphs the horizontal axis represents the time duration $t \in [0, T/4]$ and the vertical axis represents the values of $\lambda$ or $\nu$.    Since we are considering the positive part of $\gamma$, we  reflect the negative part of the graphs with respect to the horizontal axis, see Figure \ref{figukluilu}. Since $k$ is even and $l$ is odd,   at the rightmost points of the graphs, i.e., the points at $t=T/4$, we have $\lambda=\lambda^{\max}$ and $\nu=0$. Note that the point $\gamma(T/4)$ is the single point of $\gamma$ which lies on the $q_1$-axis.

\;\;

\textit{Claim 1. Assume that  $t=t_0 \in (0,T/4)$ represents a double point of the positive part $\gamma^+$. Then we have $t_0 \in (T/4kl)\Z$.}\\
Abbreviate $\lambda_0=\lambda(t_0)$ and $\nu_0=\nu(t_0)$. We  find 
$$
t = t_1, \frac{2lT }{4kl} \pm t_1, \frac{4lT }{4kl} \pm t_1, \cdots, \frac{ (k-2)lT }{4kl}  \pm t_1, \frac{ klT }{4kl}  - t_1
$$
at which $\lambda = \lambda_0$, where $0<t_1<T/4k$, and
$$
t = t_2, \frac{2kT}{4kl} \pm t_2, \frac{4kT }{4kl}\pm t_2, \cdots, \frac{ (l-1)kT}{4kl}\pm t_2 , 
$$
at which we have $\nu = \nu_0$, where $0< t_2 < T/4l$. We only consider the case that 
\begin{equation}\label{asdlkugiu33sdfsdf3}
t_0= \frac{ 2ilT}{4kl}+ t_1 = \frac{2jkT}{4kl} +t_2,
\end{equation}
for some $ 0 \leq i \leq k/2 -1$ and $ 0\leq j\leq (l-1)/2$ from which we obtain 
\begin{equation}\label{asdlkugiu333}
 t_1 = \frac{2(jk-il)T}{4kl} +t_2.
\end{equation}
The other cases can be proved in a similar way. Since $(\lambda_0, \nu_0)$ is a double point, there must exist $m\neq i $ and $n \neq j$ satisfying either $(i)$  $ 2mlT/{4kl} +t_1=  {2nkT}/{4kl} +t_2$, $(ii)$   $ {2mlT}/{4kl} +t_1= {2nkT}/{4kl} -t_2$, $(iii)$ $ {2mlT}/{4kl} -t_1=  {2nkT}/{4kl} +t_2$, or $(iv)$ $ {2mlT}/{4kl} -t_1=  {2nkT}/{4kl} -t_2$.

Assume the first case from which it follows that
$$
t_1=  \frac{2(nk-ml)T}{4kl} +t_2.
$$
This together with \eqref{asdlkugiu333}  give rise to
$$
  \frac{2(nk-ml)T}{4kl} +t_2= \frac{2(jk-il)T}{4kl} +t_2 \;\; \;\; \Rightarrow \;\; \;\;(n-j)k =(m-i)l.
$$
Since $k$ and $l$ are relatively prime, this implies that $k$ and $l$ divide  $m-i$ and $n-j$, respectively. However, this is not the case since $|m-i|<k$ and $|n-j|<l$. Thus, the first case is impossible. A similar result holds for the last case.  We now assume the second case. The third case can be proved in a similar way. Proceeding as  the first  case we obtain that
$$
 t_1 = \frac{ ((n+j)k -(i+m)l)T}{4kl} \;\;\; \text{ and } \;\;\;  t_2 = \frac{ ((n-j)k +(i-m)l)T}{4kl}
$$
and hence in view of \eqref{asdlkugiu33sdfsdf3} it follows that
$$
t_0 = \frac{ ((n+j)k +(i-m)l)T}{4kl}.
$$
 This proves the claim.

\;\;

We   divide the time interval $ [0,T/4]$ by $kl$ subintervals such that each subinterval has length $T/4kl$, see Figure \ref{figukluilu}. Consider the $kl+1$ points $t=jT/4kl$, $j=0,1,2, \cdots, kl$. By Claim 1, each double point of $\gamma^+$ must correspond to one of these points. The $(k+1)$ points $t=iT/4k$, $0 \leq i \leq k$, correspond to the maximum or  minimum of $\lambda$ and the $(l+1)$ points $t=iT/4l$, $0 \leq i \leq  l$, correspond to the maximum or  minimum of $\nu$. Since $k$ and $l$ are relatively prime, we have
\begin{equation}\label{u6fu6zvukz6v}
\left\{\frac{iT}{4k} :  i=0,1,2, \cdots, k \right \} \cap \left\{\frac{iT}{4l} :  i=0,1,2, \cdots, l \right \} = \left \{ 0, \frac{T}{4} \right \}.
\end{equation}
It is obvious that the   points corresponding to the maximum of   $\lambda$ or $\nu$  do not represent double points of $\gamma^+$.  For the points  which correspond to the minimum and which represent points of $\gamma^+$ on the $q_1$-axis, we already showed that except for one point, they are double points. The following claim  shows that the remaining $(k-1)(l-1)$ points correspond to  double points. Once this is proved, the first assertion of the proposition follows.

\;\;

\textit{Claim 2. Among the $(kl+1)$ points described as above, $(k-1)(l-1)$ points, which do not represent the maximum or  minimum of $\lambda$ or $\nu$, correspond to double points of $\gamma^+$.}\\
We fix $t _0= NT/4kl$ for some $ 0 < N < kl$ which is not contained in the two sets in the left-hand side of \eqref{u6fu6zvukz6v}. Abbreviate  $(\lambda_0, \nu_0) = (\lambda(t_0), \nu(t_0))$. As in the previous claim, we find 
$$
A = \left\{\frac{mT}{4kl}, \frac{2lT \pm mT}{4kl}, \frac{4lT \pm mT}{4kl}, \cdots, \frac{ (k-2)lT \pm mT}{4kl}, \frac{ klT - mT}{4kl}  \right\} 
$$
at which $\lambda = \lambda_0$, where $1 \leq m \leq l-1$, and 
$$
B = \left\{\frac{nT}{4kl}, \frac{2kT \pm nT}{4kl}, \frac{4kT \pm nT}{4kl}, \cdots, \frac{ (l-1)kT \pm nT}{4kl}\right\}
$$
at which we have $\nu = \nu_0$, where $1 \leq n \leq k-1$. We need to show that $\# A \cap B=2$. Since every intersection point of $\gamma^+$ is double, it suffices to show that $A$ and $B$ have an intersection point other than $t_0$.

As in Claim 1, we only consider the case $N=2al+m=2bk+n$ for some $0 \leq a \leq k/2 -1$ and $0 \leq b \leq (l-1)/2$ from which we obtain 
\begin{equation}\label{a7gkligsdf}
2(al-bk)=n-m.
\end{equation}
 The other cases can be proved in a similar way. We observe that there exist no $r\neq a$ and $s\neq b$ satisfying $2 r l+m= 2  s k +n$ or $2 r l -m= 2  s k -n$  since $k$ and $l$ are relatively prime. On the other hand, since  $1 \leq m \leq  l-1 $, there exist $P, Q \in \Z$ such that $m = kP - lQ$. We then define $i,j$ to be
$$
i=\begin{cases} P - b & \text{ if $P>b$} \\ b-P & \text{ if $P<b$}  \\ 0 & \text{ if $P=b$}\end{cases} \;\;\; \text{ and } \;\;\; j=\begin{cases} Q - a & \text{ if $Q>a$} \\ a-Q & \text{ if $Q<a$} \\ 0 & \text{ if $Q=a$.} \end{cases} 
$$
Consider the case  $(i,j)= (P-b, Q-a)$. We then have $m=k(i+b)-l(j+a)$ and $n=k(i-b)-l(j-a)$ from which we obtain   $2jl+m=2ik-n$. Therefore, we have
$$
\lambda_0 = \lambda\bigg( \frac{ 2al T +mT}{4kl} \bigg) = \lambda \bigg(   \frac{2jlT+mT}{4kl} \bigg)
$$
and
$$
\nu_0 =\nu\bigg( \frac{ 2al T +mT}{4kl} \bigg) = \nu \bigg(   \frac{2jlT+mT}{4kl} \bigg).
$$
It remains to show that $a \neq j$. Assume by contradiction that $a=j$. We then have $n=k(i-b)$.  Since $ 1 \leq n \leq k-1$, this is not the case. This shows that $(\lambda_0, \nu_0)$ is a double point. For the cases   $(i,j)= (P-b, 0)$, $ (b-P, a-Q)$, or $ (0, a-Q)$, the assertion can be  proved in a similar way. The other five cases never happen.   This proves the claim and hence the first assertion.

\;

(ii)  Since the two brake-collision orbits are related by the $q_1$-axis reflection, without loss of generality we may choose one of them, say $\gamma$. Since $k$ and $l$ are relatively prime, both $k$ and $l$ are odd. As in the proof of the previous case,  by abuse of notation we use the symbol $\gamma$ for the restriction $\gamma|_{[0,T/2]}$ which has the same image as $\gamma$. Different from the previous case the  points of $\gamma$  on the $q_1$-axis are not necessarily double points since $\gamma$ is not symmetric with respect to the $q_1$-axis. As before,  we consider $\lambda$ and $\nu$ as functions of time. Since we are not considering the positive part of $\gamma$, but $\gamma$ itself, we do not need to reflect the negative part of the graphs. Then a  similar argument as in the proof of the first case proves the second assertion. This completes the proof of the proposition.
\end{proof}

\begin{Remark} \rm The assertions of the previous proposition hold for all brake-brake orbits or brake-collision orbits in any separable Stark systems, provided that the phase portrait of each variable is given by a simple closed curve which is symmetric with respect to both horizontal and vertical  axes.  
\end{Remark}

\begin{Remark} \rm  The proof of the previous proposition carries over to symmetric (with respect to the $q_1$-axis) periodic orbits for the case $k+l$ is even:   the corresponding symmetric periodic orbit on a $T_{k,l}$-torus has precisely $(k-1)(l-1) + (k+l-2)/2$ double points.
\end{Remark}

\begin{Example} \rm

\begin{figure}[h]
\begin{subfigure}{0.49\textwidth}
  \centering
  \includegraphics[width=1.1\linewidth]{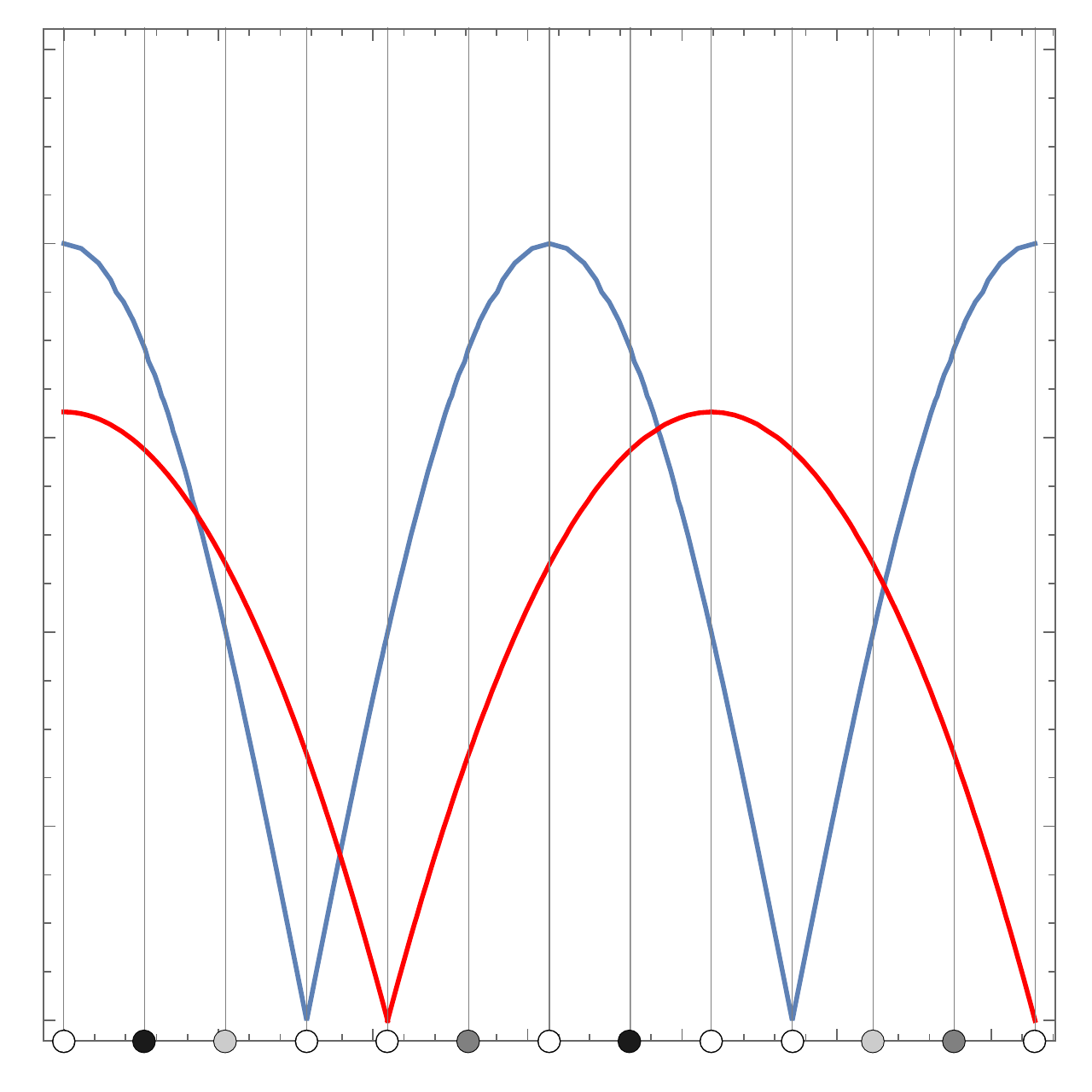}
\caption{ }
\label{figukluilu}
\end{subfigure}
\begin{subfigure}{0.49\textwidth}
  \centering
  \includegraphics[width=0.62\linewidth]{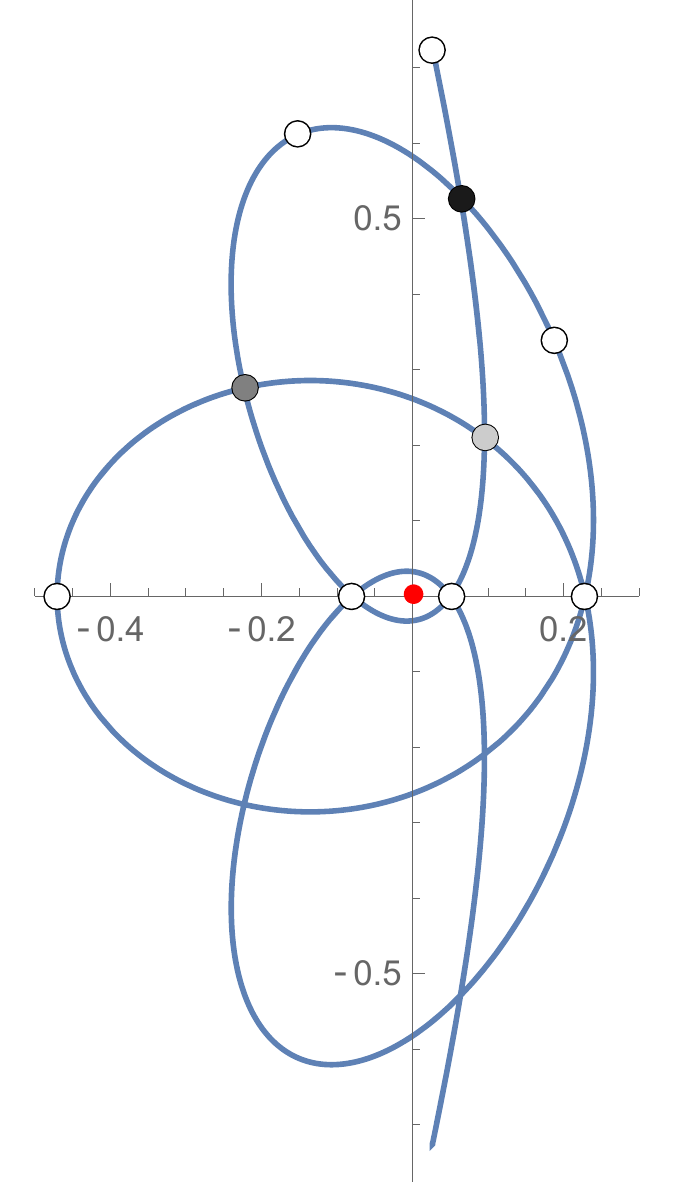}
\caption{ }
\label{lkugisdgs33}
\end{subfigure}
\caption{ The case $(k,l)=(4,3)$; (a) the blue curve is the graph of $\lambda=\lambda(t)$ and the red one is the graph of $\nu=\nu(t)$. The gray vertical lines divide the time interval $[0,T/4]$ by 12 subintervals of length $T/48$. The white dots represent the maximum or the minimum of the variables. The six gray dots make three pairs according to brightness which correspond to double points of the positive part;  (b)  a brake-brake orbit on a $T_{4,3}$-torus. Three gray dots on the positive part of the orbit correspond to the pairs described in (a).} 
\label{sd3fsdfsdfsdfsdd}
\end{figure}

In this example, following the proof of Proposition \ref{propoeeuloerodd} we study double points on a $T$-periodic brake-brake orbit $\gamma$ for $(k,l)=(4,3)$. Again by abuse of notation, we use the symbol $\gamma$ for the restriction $\gamma|_{[0, T/2]}$.

Abbreviate by $\gamma^+$ the positive part of $\gamma$. 
We mark $13$ points $t=jT/48$, $0 \leq j \leq 12$, on the interval $[0,T/4]$, see Figure \ref{figukluilu}. Note that
$$
\left \{ \frac{jT}{48} \; \bigg| \;j = 0,3,4,6,8,9,12  \right \}
$$
correspond to the maximum or  minimum of the variable $\lambda$ or $\nu$.  The first point is the braking point of $\gamma^+$ and the last point is the single point of $\gamma$ which lies on the $q_1$-axis. Among the other five points, $(4+3-1)/2 = 3$ points represent double points on the $q_1$-axis and the remaining two points are  single point of $\gamma$, see Figure \ref{lkugisdgs33}. 

  In view of the proof of Proposition \ref{propoeeuloerodd},  the six points $t=jT/48$, $j=1,2,5,7,10,11$, make  the three pairs 
$$
(1,7), (2,10), (5,11)
$$
which correspond to three double points of $\gamma^+$. Indeed, for example if we take $N=1$ in the proof of Claim 2 of the same proposition, then we have $m=n=P=Q=i=j=1$ and $a=b=0$. It follows that  $2jl+m = 2 \cdot 1 \cdot 3 + 1 = 7 = 2 \cdot 1 \cdot 4 \cdot -1  = 2ik-n$ frow which we conclude that $t=T/48$ and $t=7T/48$ represent a double point. Consequently, the brake-brake orbit  for $(k,l)=(4,3)$ has precisely nine quadruple points, see Figure \ref{lkugisdgs33}. 
\end{Example}

\begin{Theorem}\label{themeulerinariaht}   For the $T_{k,l}$-torus family in the $S$- or $S'$-region  in the Euler problem, we have
\[\mathcal{J}_1(T_{k,l}) = \begin{cases}  2kl-k-l+1      & \text{ if  $k+l$ is odd} \\  (kl-k-l+2)/2      & \text{ if  $k+l$ is even} \end{cases}
\]and
$$
\mathcal{J}_2(T_{k,l}) =   kl-k-l+1    .
$$
\end{Theorem}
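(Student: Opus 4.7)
The overall plan is as follows. By Proposition~\ref{sdsdghomo} each $T_{k,l}$-torus family in the $S$- or $S'$-region is a Stark-Zeeman homotopy, so the invariants $\mathcal{J}_1$ and $\mathcal{J}_2$ are constant along the family; on a single torus, the orbits form an $S^1$-family which is itself a Stark homotopy, so the invariants are additionally constant across that $S^1$, meaning I am free to evaluate on any one symmetric representative. By Proposition~\ref{lemmacollision} I take the unique brake-brake orbit when $k+l$ is odd and one of the two brake-collision orbits when $k+l$ is even.

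Plugging the quadruple-point counts of Proposition~\ref{propoeeuloerodd},
\[
N_{\mathrm{bb}} = (k-1)(l-1) + \frac{k+l-1}{2} \qquad\text{and}\qquad N_{\mathrm{bc}} = \frac{(k-1)(l-1)}{4},
\]
into the $J^+$-formulas of Proposition~\ref{sdfkluliugliu3gsgd} yields immediately
\[
\mathcal{J}_1 = \begin{cases} 2N_{\mathrm{bb}} = 2(k-1)(l-1)+(k+l-1) = 2kl-k-l+1, & k+l \text{ odd}, \\ 2N_{\mathrm{bc}} + \tfrac{1}{2} = \tfrac{(k-1)(l-1)+1}{2} = \tfrac{kl-k-l+2}{2}, & k+l \text{ even},\end{cases}
\]
together with $\mathcal{J}_2 = 4N_{\mathrm{bc}} = (k-1)(l-1) = kl-k-l+1$ in the brake-collision case $k+l$ even. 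Three of the four formulas therefore reduce to a brief arithmetic check once the representative is in hand.

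The remaining case, $\mathcal{J}_2$ when $k+l$ is odd, is the only one that is not immediate from the previous sections, because Proposition~\ref{sdfkluliugliu3gsgd} does not record a $\mathcal{J}_2$-formula for brake-brake orbits (the computation was explicitly deferred there). Since the brake-brake orbit $\gamma$ has winding number $0$ around the Earth, by definition $\mathcal{J}_2(\gamma) = J^+(\widetilde{K})$ for either connected component $\widetilde{K}$ of the Levi-Civita preimage $L^{-1}(\gamma)$. My plan is to compute $J^+(\widetilde{K})$ by hand: the braking points of $\gamma$ lift to braking points of $\widetilde{K}$, so $\widetilde{K}$ is itself of brake-brake topological type, and the same perturbation-and-resolution argument that produced $J^+=2N$ for brake-brake orbits in Proposition~\ref{sdfkluliugliu3gsgd} will give $J^+(\widetilde{K}) = 2\widetilde{N}$, with $\widetilde{N}$ the number of quadruple points of $\widetilde{K}$. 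Matching $2\widetilde{N}$ against the target value $(k-1)(l-1)$ reduces the problem to showing $\widetilde{N} = (k-1)(l-1)/2$.

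The main obstacle is this last count. At an off-axis quadruple point $q_0$ of $\gamma$, the four times $t_1,t_2,t_3,t_4$ at which $\gamma$ visits $q_0$ lift to $\pm\sqrt{q_0}$ with signs $\epsilon_i$ controlled by the parity of the winding around the Earth of the time-arcs $\gamma|_{[t_1,t_i]}$; the point contributes a genuine quadruple point of $\widetilde{K}$ exactly when all four signs coincide, and otherwise splits into lower-multiplicity self-intersections whose net $J^+$-contribution must be tracked separately through the standard perturbation calculus. I plan to refine the combinatorial bookkeeping developed in the proof of Proposition~\ref{propoeeuloerodd} — the decomposition of the period into $2k$ quarter-$\lambda$ and $2l$ quarter-$\nu$ subintervals — so that these parities can be read off, and to show that exactly half of the $(k-1)(l-1)$ off-axis quadruple points of $\gamma$ lift cleanly to $\widetilde{K}$, while the $(k+l-1)/2$ on-axis quadruple points split their two preimages between the two components of $L^{-1}(\gamma)$ and so contribute no quadruple point to either $\widetilde{K}$. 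Once this parity count is in place,
\[
\mathcal{J}_2(\gamma) = J^+(\widetilde{K}) = 2\widetilde{N} = (k-1)(l-1) = kl-k-l+1,
\]
completing the theorem.
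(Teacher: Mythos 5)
Your reduction to the symmetric representatives of Proposition~\ref{lemmacollision}, and your derivation of $\mathcal{J}_1$ in both parities and of $\mathcal{J}_2$ for $k+l$ even by substituting the quadruple-point counts of Proposition~\ref{propoeeuloerodd} into Proposition~\ref{sdfkluliugliu3gsgd}, are exactly the paper's argument and are correct. You also correctly isolate the one genuinely new computation, $\mathcal{J}_2$ for $k+l$ odd, and your strategy there --- pass to one component $\widetilde{K}$ of the Levi--Civita preimage, use $J^+(\widetilde{K})=2\widetilde{N}$, and decide which quadruple points of $\gamma$ survive in $\widetilde{K}$ by a parity of winding about the Earth --- is the same strategy the paper follows.

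The gap is that the two concrete counting claims your plan rests on are both false, even though their sum accidentally equals the correct total $\widetilde{N}=\tfrac12(k-1)(l-1)$. Take $k$ even and $l$ odd. Since $\gamma$ meets the negative $q_1$-axis (the ray $\nu=-\pi$ beyond the Earth) and $L^{-1}$ of that ray is the $q_2$-axis, the component $\widetilde{K}$ is symmetric about the $q_2$-axis; the symmetric pair of visits of $\gamma$ to each of the $(l-1)/2$ quadruple points on that ray therefore lifts to the \emph{same} preimage point (a fixed point of the reflection $w\mapsto-\bar{w}$), so these on-axis points \emph{do} produce quadruple points of $\widetilde{K}$. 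Only the $k/2$ quadruple points on the segment $\lambda=0$ fail to lift. (Note that for any $q_0\neq 0$ the two preimages $\pm\sqrt{q_0}$ are trivially distributed one per component; the relevant question, which your phrasing conflates with this, is how the four visit times distribute over them.) Consequently the off-axis count cannot be one half either: exactly $\tfrac{(k-2)(l-1)}{2}$ of the $(k-1)(l-1)$ off-axis quadruple points lift, not $\tfrac{(k-1)(l-1)}{2}$. For $(k,l)=(4,3)$ the true breakdown is $1$ of the $3$ on-axis and $2$ of the $6$ off-axis points lifting (total $3$, as needed), versus your predicted $0$ and $3$. So the two lemmas you propose to prove are unprovable as stated; the correct bookkeeping (the paper locates the even-rotation-number double points among the times $(\alpha k+2\beta l)/2kl$ and discards the extrema of $\lvert\lambda\rvert$ and $\lvert\nu\rvert$) yields $\tfrac{l-1}{2}+\tfrac{(k-2)(l-1)}{2}=\tfrac{(k-1)(l-1)}{2}$ and hence the stated value of $\mathcal{J}_2$.
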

 \begin{proof} In view of Propositions \ref{sdfkluliugliu3gsgd}   and \ref{propoeeuloerodd}, it remains to prove  the formula of $\mathcal{J}_2$ invariant for the case that $k+l$ is odd. 
 We only consider the case that $k$ is even and $l$ is odd. The other case can be proved in a similar way. 
 
Let $\gamma$ be a brake-brake orbit of period $T$.
As in the proof of Proposition \ref{propoeeuloerodd}, we only consider the half $\gamma|_{[0,T/2]}$, which will be denoted again by $\gamma$, and then  every self-intersection point is a double point.
Without loss of generality we may assume that $T=2$.
We also parametrize $K$ so that    the braking point with $q_2 >0$ is the initial point and hence the braking point with $q_2<0$ is the endpoint.

We assign each double point of $\gamma$ a rotation number as follows.
Let $p$ be a double point and we then find $0<t_0<t_1<1$ such that $\gamma(t_0) = \gamma(t_1) =p$.
The rotation number $\mathrm{rot}(p)$ is defined as the winding number of the tangent vector $\gamma'(t)$ for $t \in [t_0, t_1]$.
It is obvious that this number is an integer.

Recall that since the winding number of $\gamma$ is even, the pre-image $\widetilde{\gamma} := L^{-1}(\gamma)$ consists of two connected components $\gamma_1,\gamma_2$.
We fix a double point $p$ of $\gamma$  and $0<t_0< t_1<1$ as above.  
The pre-image $L^{-1}(p)$ is a pair of double points $p_1, p_2$ of $\widetilde{\gamma}$.
There are two possibilities: One is that $p_1,p_2$ are intersections between $\gamma_1,\gamma_2$, and the other is that $p_1,p_2$ is a double point of $\gamma_1,\gamma_2$, respectively. 
In order to determine the $\mathcal{J}_2$ invariant of $\gamma$, we have to count double points of $\gamma_1$ (or equivalently of $\gamma_2$) and hence we need the second scenario.
Note that since $L(z)= z^2$,  it is the case    if and only if $\mathrm{rot}(p)$ is even.

We claim that the number of double points of $\gamma_1$ is given by
\[
\frac{1}{2} ( k-1)(l-1)
\]  
from which the assertion of the theorem follows.
To this end, we take the approach of the proof of Proposition  \ref{propoeeuloerodd}.
 Suppose that $t_0 \in (0,1)$ (recall that we have assumed that $T=2$) represents a double point of $\gamma$ of even rotation number.
 We abbreviate $(\lambda_0 ,\nu_0) = (\lambda (t_0),\nu(t_0))$
 and  find that
 \[
 t = t_1, \frac{2}{k} + t_1, \frac{4}{k}+ t_1, \ldots, \frac{k-2}{k}+t_1
 \]
 at which $\lambda = \lambda_0$, where $t_1 \in (0, 1/2k)$ and that
 \[
 t=t_2, \frac{1}{l} \pm t_2, \frac{2}{l} \pm t_2,\ldots,  1- t_2
\]
at which $\nu = \nu_0$, where $t_2 \in (0, 1/2l )$.
Proceeding as in Claim 1 of the proof of Proposition \ref{propoeeuloerodd}, one can show that $t_0$ is of the form
\[
\frac{ \alpha k + 2 \beta l }{2 kl}, \quad \alpha, \beta \in \Z.
\]
 Note that
 \[
 \# \left\{ \frac{ \alpha k + 2 \beta l }{2 kl} \in [0,1] \relmiddle| \alpha, \beta \in \Z \right\} = kl+1.
 \]
As before, not every such point represents a double point.
We set
\[
A= \left\{ \frac{i}{k} \relmiddle| i =0,1,\ldots, k \right\}
\]
and
\[ 
B=\left\{ \frac{j}{2l} \relmiddle| j=0,1,\ldots, 2l \right\}.
\]
 The elements in $A$ correspond to the maximum of $\lvert \lambda\rvert $, and it is obvious that they do not represent double points. 
 Among $2l+1$ elements in $B$, the $(l+1)$ elements  $j/2l$, $j=0, 2, \ldots, 2l$,   correspond to the maximum of $\lvert \nu \rvert$, which do not represent double points.
 The remaining $l$ elements   $j/2l$, $j=1,3, \ldots, 2l-1$, correspond to $\nu =0$. 
 Here we have used, as in the proof of Proposition \ref{propoeeuloerodd},  the symbol $\nu$ for $\nu+\pi$ for sake of convenience.
Recall that $\nu=0$ represents the negative $q_1$-axis on which there exist a single point and $(l-1)/2$ double points.
Since $L(z) = z^2$ and $\gamma$ is $q_1$-symmetric, $\gamma_1$ is $q_2$-symmetric.
This in particular implies that a single point and the $(l-1)/2$ double points of $\gamma$ on the negative $q_1$-axis represent a single point and $(l-1)/2$ double points of $\gamma_1$ on the $q_2$-axis.
Proceeding as in Claim 2 of the proof of Proposition \ref{propoeeuloerodd}, we find the number of double points of $\gamma$ of even rotation number equals
\[
\frac{1}{2} \left( kl+1 - (k+1) - (l+1) +2\right) = \frac{1}{2}(k-1)(l-1).
\]
This proves the claim and completes the proof of the theorem. 
\end{proof}

\subsection{Comparison of the invariants of the two problems}  

Recall that each bounded component of  the regularized energy hypersurfaces of the rotating Kepler problem and the Euler problem is diffeomorphic to $\R P^3$. Since every even cover of a loop in $\R P^3$ lifts to a loop in $S^3$,  an even cover of any periodic orbit can be regarded as a knot in $S^3$. Note that if two knots $K_1$ and $K_2$ in $S^3$ are isotopic, then the projections $\pi(K_1)$ and $\pi(K_2)$ are also isotopic, where $\pi :S^3 \rightarrow \R P^3$. Therefore, two periodic orbits are never isotopic if their lifts in $S^3$ have different knot types.

 Let $\gamma^{\text{RKP}}$ be a $T_{k,l}$-type orbit in the rotating Kepler problem. We first suppose that   $k \pm l$ are even.  Recall  that the $T_{k,l}$-torus family bifurcates from an even-fold covered direct circular orbit which is contractible, see \cite[Section 7.2]{RKP}. Consequently,  $\gamma^{\text{RKP}}$ is contractible. It follows that the lift $\widetilde{\gamma}^{\text{RKP}}$ of $\gamma^{\text{RKP}}$ in $S^3$ consists of two components. It is obvious that they have the same knot type and hence without loss of generality, we may focus on one of two components.  If $k \pm l$ are odd, then by the same reasoning $\gamma^{\text{RKP}}$ is noncontractible. Therefore, by traversing $\gamma^{\text{RKP}}$ twice we lift it to  $\widetilde{\gamma}^{\text{RKP}}$ in $S^3$ which is a single   orbit.

Before determining knot types we note that if $k+l$ is even, then $\text{gcd}( (k+l)/2,  (k-l)/2)=1$, where $\text{gcd}(a,b)$ denotes the greatest common divisor of $a, b \in \R$.    Similarly, if $k+l$ is odd, then  we have $\text{gcd}(k+l,k-l)=1.$

\medskip

\textit{Case 1.} $\gamma^{\text{RKP}}$ is contractible.\\
Recall that the $T_{k,l}$-torus family bifurcates from the $(k-l)$-fold covered direct circular orbit and dies at the $(k+l)$-fold covered retrograde circular orbit. Since the $T_{k,l}$-torus family is a smooth two-parameter family of   $T_{k,l}$-type orbits, it is obvious that any two family members have the same knot type. Therefore, in order to determine the knot type of the $T_{k,l}$-torus family it suffices to consider a suitable representative. We choose two $T_{k,l}$-type orbits $\widetilde{\gamma}_1$ and $\widetilde{\gamma}_2$ which are sufficiently close to the lifts of the $(k-l)$-fold covered direct circular orbit and the $(k+l)$-fold covered retrograde circular orbit, respectively.  To explain them in more details,  we abbreviate by $(c_d, c_r)$ the interval  of  energies in which the $T_{k,l}$-torus family takes values, where  at $H_{\text{RKP}}=c_d$ and at  $H_{\text{RKP}}=c_r$, the $T_{k,l}$-torus family bifurcates and dies, respectively. The two orbits $\widetilde{\gamma}_1$ and $\widetilde{\gamma}_2$ are then given by $T_{k,l}$-type orbits having $H_{\text{RKP}}=c_d+\epsilon$ and  $H_{\text{RKP}}=c_r-\epsilon$, respectively, for $\epsilon >0$ small enough.  Consider the energy interval $[c_d,c_d+\epsilon]$ representing the solid torus in $S^3$ whose boundary is a $T_{k,l}$-torus containing  $\widetilde{\gamma}_1$ and whose core is the lift of the  $(k-l)$-fold covered direct circular orbit. Since $\epsilon>0$ is small enough, it follows that $\widetilde{\gamma}_1$ is a $( k-l , n)$-torus knot for some $n>0$ satisfying $\text{gcd}( k-l,n)=1$ .  In a similar way, we see that $\widetilde{\gamma}_2$ is a $( k+l , m)$-torus knot for some $m>0$ satisfying $\text{gcd}(k+l,m)=1$. Since $\widetilde{\gamma}_1$ and $\widetilde{\gamma}_2$ have the same knot type, it follows that $n=k+l$ and $m=k-l$. We conclude that the lift of any $T_{k,l}$-type orbit  a $((k+l)/2, (k-l)/2)$-torus knot.

\;\;

\textit{Case 2.} $\gamma$ is noncontractible.\\
In this case $k\pm l $ are odd. An argument similar with the one given in Case 1  shows that the lift of a $T_{k,l}$-type orbit  a $(k+l, k-l)$-torus knot. 

 \bigskip

Let $\alpha^{\text{Euler}}$ be a $T_{k,l}$-type orbit  in the Euler problem. We now  do a similar business for torus-type orbits in the $S$- and the $S'$-regions.  Recall from proposition \ref{lemmacoversing} and  \cite[Section 7.2]{RKP} that $T_{k,l}$-type orbits are contractible if $k+l$ is odd and noncontractible if $k+l$ is even. An argument similar with the one given above   with two $T_{k,l}$-type orbits, which are sufficiently close to (multiple covered) exterior and interior collision orbits, shows that regardless of the parity of $k+l$, the lift of  $\alpha^{\text{Euler}}$ is a $(k,l)$-torus knot.

\medskip

We have proven
\begin{Proposition}\label{sdlfisholisef} Let $\gamma^{\text{RKP}}$ and $\alpha^{\text{Euler}}$ be  $T_{k,l}$-type orbits in the rotating Kepler problem and in the Euler problem, respectively. Then
 \begin{enumerate}[label=(\roman*)]
\item $\gamma^{\text{RKP}}$ is either a $(k+l,k-l)$-torus knot or a $( (k+l)/2, (k-l)/2)$-torus knot if $k+l$ is odd or if $k+l$ is even, respectively;
\item $\alpha^{\text{Euler}}$  is a $(k,l)$-torus knot. 
\end{enumerate}
\end{Proposition}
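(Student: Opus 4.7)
The plan is to formalize the sketch given just above the statement, using three ingredients: the double cover $S^3 \to \R P^3$, the smoothness of the $T_{k,l}$-torus family (so that all of its members have isotopic lifts in $S^3$), and the classification of torus knots. Each bounded component of the regularized energy hypersurface below the first critical value is diffeomorphic to $\R P^3$, and a loop in $\R P^3$ admits a connected lift to $S^3$ exactly when it is noncontractible; a contractible loop lifts to two disjoint copies, which moreover have the same knot type. By Proposition \ref{lemmacoversing} (and \cite[Section 7.2]{RKP}) the parity of $k+l$ controls contractibility, so in both problems a well-defined unoriented lift $\widetilde{\gamma}$ in $S^3$ is available, and the torus-knot terminology applies to it.

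For the rotating Kepler problem I would use the two degenerate orbits at the ends of the family. Fix $\epsilon > 0$ small and take $\widetilde{\gamma}_1$ on the $T_{k,l}$-torus of energy $H_{\text{RKP}} = c_d + \epsilon$ and $\widetilde{\gamma}_2$ on that of energy $H_{\text{RKP}} = c_r - \epsilon$. Each such torus bounds a solid torus in $S^3$ whose core is the appropriately covered direct or retrograde circular orbit; using action-angle coordinates near these circular orbits one sees that $\widetilde{\gamma}_1$ is a $(k-l, n)$-torus knot and $\widetilde{\gamma}_2$ a $(k+l, m)$-torus knot for some $n,m > 0$ coprime to $k-l$ and $k+l$ respectively (with $k \pm l$ replaced by $(k \pm l)/2$ in the contractible case $k+l$ even, where the lift is already connected on a single traversal). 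Because $\widetilde{\gamma}_1$ and $\widetilde{\gamma}_2$ lie in a smooth family of embedded curves they are isotopic in $S^3$, and the classification of torus knots, which says a $(p,q)$-torus knot is isotopic to a $(p',q')$-torus knot if and only if $\{|p|,|q|\} = \{|p'|,|q'|\}$, forces $n=k+l$ and $m=k-l$. This proves (i). For the Euler problem the identical argument applies, with exterior and interior collision orbits playing the roles of the retrograde and direct circular orbits (Proposition \ref{lemmacoversing}), giving $\widetilde{\alpha}^{\text{Euler}}$ as a $(k,l)$-torus knot.

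The hard part will be the local analysis near the degenerate orbits, namely verifying that the $T_{k,l}$-torus really bounds a standardly embedded solid torus in $S^3$ whose core is the lifted multiply-covered degenerate orbit, and reading off the two winding numbers correctly, taking the $\R P^3 \to S^3$ covering into account. For the rotating Kepler problem this reduces to the observation that the flow near a circular orbit is conjugate to a rotation on $D^2 \times S^1$, and the $j$-fold covering of the circular orbit explains the first torus-knot parameter. For the Euler problem the corresponding normal form at the multiply-covered exterior and interior collision orbits comes from the elliptic-coordinate phase portraits of Section \ref{secorbits}, where the $\lambda$- and $\nu$-cycles supply the two winding numbers $(k,l)$ directly; the care needed to handle the $2$-to-$1$ branched covering $(\lambda,\nu) \mapsto (q_1,q_2)$ and the involution \eqref{phaseinvolusion} is exactly what distinguishes the contractible and noncontractible cases.
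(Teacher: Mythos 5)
Your proposal follows essentially the same route as the paper: decompose by the parity of $k+l$ (contractibility in $\R P^3$), lift to $S^3$, take representatives near the two degenerate ends of the $T_{k,l}$-torus family sitting on boundaries of solid tori whose cores are the multiply covered circular (resp.\ collision) orbits, and use the classification of torus knots together with the isotopy along the family to pin down the second winding number. Your explicit flagging of the local normal-form analysis near the degenerate orbits is, if anything, more careful than the paper's treatment, so no substantive difference to report.
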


\medskip

We distinguish the following two cases:

\medskip

\noindent \textit{{\large Case 1. $k+l$ is even.}}\\
In view of   Proposition \ref{sdlfisholisef}  the lifts of   $T_{k,l}$-type orbits in the rotating Kepler problem and of $T_{(k+l)/2,(k-l)/2}$-type orbits in the Euler problem  have the same knot type and their projections on $\R P^3$ are \textit{contractible}. Recall from Theorems \ref{maintheoremRKP}    and  \ref{themeulerinariaht} that
$$
\mathcal{J}_1 (T_{k,l}^{\text{RKP}})=   1 - k + \frac{k^2}{2} - \frac{l^2}{2}  
$$
and
\begin{equation}\label{eqeulkerliussdg}
 \mathcal{J}_1(T_{r,s}^{\text{Euler}})=  2rs-r-s+1 \hspace{2mm}\text{ if $r+s$ is odd.}
\end{equation}
Plugging $r=(k+l)/2$ and $s=(k-l)/2$ into \eqref{eqeulkerliussdg}  gives rise to 
$$
 \mathcal{J}_1(T_{(k+l)/2,(k-l)/2}^{\text{Euler}}) = \frac{k^2-l^2}{2} -k+1=\mathcal{J}_1 (T_{k,l}^{\text{RKP}}).
$$
In a similar way, we find that
 \[
 \mathcal{J}_2 ( T^{\text{Euler}}_{ (k+l)/2, (k-l)/2} ) = 1 - k + \frac{k^2 - l^2}{4} =   \mathcal{J}_2 ( T^{\text{RKP}}_{k,l}).
 \]

\;\;

\noindent \textit{{\large Case 2. $k+l$ is odd. }}\\
In this case,     $T_{k,l}$-type orbits in the rotating Kepler problem and $T_{k+l,k-l}$-type orbits in the Euler problem have the same knot type and their projections are \textit{noncontractible}. Replacing $(k,l)$ by  $(k+l,k-l)$ as before, we find that
$$
 \mathcal{J}_1(T_{k+l,k-l}^{\text{Euler}}) = \frac{k^2-l^2}{2} -k+1=\mathcal{J}_1 (T_{k,l}^{\text{RKP}}).
$$
Since the $\mathcal{J}_2$ invariant is determined by  the $\mathcal{J}_1$ invariant, see Proposition \ref{theoremsdih4}, we also obtain that 
 $$
 \mathcal{J}_2(T_{k+l,k-l}^{\text{Euler}}) = (k-1)^2-l^2=\mathcal{J}_2 (T_{k,l}^{\text{RKP}}).
$$

\;\;\;

We have proven

\begin{Theorem}\label{mainsdliufhsdg}  Let $\gamma^{\text{RKP}}$ and $\alpha^{\text{Euler}}$ be torus-type orbit in the rotating Kepler problem and in the Euler problem, respectively, of same knot type. They have the same $\mathcal{J}_1$ and $\mathcal{J}_2$ invariants.
\end{Theorem}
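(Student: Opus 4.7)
The plan is to combine the knot-type classification in Proposition \ref{sdlfisholisef} with the explicit formulas in Theorems \ref{maintheoremRKP} and \ref{themeulerinariaht}, and then verify the desired equalities by direct substitution. Since by hypothesis the two orbits share a knot type, Proposition \ref{sdlfisholisef} determines the Euler indices uniquely from the RKP indices, and the remaining content is algebraic bookkeeping.

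I would begin by splitting according to the parity of $k+l$ for the RKP orbit $\gamma^{\text{RKP}} = \gamma^{\text{RKP}}_{k,l}$. If $k+l$ is odd, then Proposition \ref{sdlfisholisef} forces $\alpha^{\text{Euler}}$ to be a $T_{k+l,k-l}$-type orbit (the lift is a $(k+l,k-l)$-torus knot, and the Euler identification reads $(r,s)=(k+l,k-l)$). If $k+l$ is even, coprimality of $k,l$ forces both to be odd, hence $(k\pm l)/2$ are coprime integers, and $\alpha^{\text{Euler}}$ is of type $T_{(k+l)/2,(k-l)/2}$.

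The critical parity observation, which selects the correct branch of the Euler formula in Theorem \ref{themeulerinariaht}, is as follows. In the case $k+l$ even we set $(r,s)=((k+l)/2,(k-l)/2)$ and then $r+s=k$ is \emph{odd}; in the case $k+l$ odd we set $(r,s)=(k+l,k-l)$ and then $r+s=2k$ is \emph{even}. Plugging these choices into $\mathcal{J}_1(T_{r,s}^{\text{Euler}})$ and simplifying produces $1-k+(k^2-l^2)/2$ in both cases, which is exactly $\mathcal{J}_1(T_{k,l}^{\text{RKP}})$ of Theorem \ref{maintheoremRKP}. This settles the $\mathcal{J}_1$ part.

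For $\mathcal{J}_2$, when $k+l$ is even the winding number $w_0$ of both orbits is even, so one substitutes into the Euler $\mathcal{J}_2$ formula directly and obtains $1-k+(k^2-l^2)/4$, matching the RKP value. When $k+l$ is odd the winding number is odd, and Proposition \ref{theoremsdih4} gives $\mathcal{J}_2 = 2\mathcal{J}_1-1$ on both sides, so the agreement on $\mathcal{J}_1$ automatically propagates to $\mathcal{J}_2$. I do not expect any conceptual obstacle here; the only point requiring care is tracking the parity of $r+s$ to select the correct branch of the Euler formulas, together with the observation that coprimality of $k,l$ forces them to be odd when $k+l$ is even so that $((k+l)/2,(k-l)/2)$ are themselves coprime and the indexing $T_{(k+l)/2,(k-l)/2}$ is admissible.
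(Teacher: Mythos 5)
Your proposal is correct and follows essentially the same route as the paper: match the indices via the knot-type identification of Proposition \ref{sdlfisholisef} (with $(r,s)=((k+l)/2,(k-l)/2)$ for $k+l$ even and $(r,s)=(k+l,k-l)$ for $k+l$ odd, noting the resulting parity of $r+s$), substitute into Theorems \ref{maintheoremRKP} and \ref{themeulerinariaht}, and invoke Proposition \ref{theoremsdih4} for $\mathcal{J}_2$ in the odd case. The algebra checks out, so nothing is missing.
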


Let $\gamma^{\text{RKP}}$ and $\alpha^{\text{Euler}}$ be torus-type orbits in the rotating Kepler problem and in the Euler problem with a given $\mu = \mu_0$, respectively. We assume that they are continued by families of periodic orbits to periodic orbits in the PCR3BP with $\mu=\mu_0$, say  $\gamma^{\text{3BP}}$ and $\alpha^{\text{3BP}}$, respectively. 

\begin{Remark}  \rm If $\mu_0$ is sufficiently small, then the aforementioned family from $\gamma^{\text{RKP}}$ to $\gamma^{\text{3BP}}$ indeed exists, see \cite{Arenstorf, Barrar}.
\end{Remark}

Since $\mathcal{J}_1$ and $\mathcal{J}_2$ are invariants for families of periodic orbits in Stark-Zeeman systems, we see that $ \mathcal{J}_i ( \gamma^{\text{3BP}}) = \mathcal{J}_i ( \gamma^{\text{RKP}})$ and $ \mathcal{J}_i ( \alpha^{\text{3BP}}) = \mathcal{J}_i ( \alpha^{\text{Euler}})$, $i=1,2$.

\;\;\;

\end{document}